\numberwithin{equation}{section}
 \newlength{\baseunit}               
\newcommand\highlightbox[2][]{\tikz[overlay]\node[fill=blue!20,inner sep=1pt, anchor=text, rectangle, rounded corners=1mm,#1]{$\displaystyle #2$};\phantom{#2}}
\newcommand{\leqnomode}{\tagsleft@true}
\newcommand{\reqnomode}{\tagsleft@false}
\newtheorem{theorem}{Theorem}[section]
\newtheorem{lemma}[theorem]{Lemma}
\newtheorem{prop}[theorem]{Proposition}
\newtheorem{corollary}[theorem]{Corollary}
\theoremstyle{definition}
\newtheorem{definition}[theorem]{Definition}
\newtheorem{define}[theorem]{Definition}
\newtheorem{remark}[theorem]{Remark}
\newcommand{\cO}{{\mathcal O}}
\newcommand{\La}{\Lambda}
\def\down{{\scriptstyle\vee}}
\def\up{{\scriptstyle\wedge}}
\def\down{\vee}
\def\up{\wedge}
\newcommand{\mg}{\mathfrak{g}}
\newcommand{\mh}{\mathfrak{h}}
\newcommand{\pp}{\mathfrak{p}}
\newcommand{\mN}{\mathbb{N}}
\newcommand{\mC}{\mathbb{C}}
\newcommand{\mZ}{\mathbb{Z}}
\newcommand{\SO}{\mathfrak{so}}
\newcommand{\ff}{{\mathbf{f}}}
\newcommand{\la}{\lambda}
\newcommand{\END}{\operatorname{End}}
\newcommand{\op}{\operatorname}
\newcommand{\de}{{\underline\delta}}
\newcommand{\MdV }{M^\mathfrak{p}(\de)\otimes V^{\otimes d}}
\newcommand{\Mde  }{M^\pp(\de)}
\newcommand{\VW}{\bigdoublevee}
\newcommand{\VWd}{{\bigdoublevee}_d}
\newcommand{\VWdcycl}{{\bigdoublevee}_d^{\rm cycl}}
\DeclareMathOperator{\End}{End}
\begin{document}
\title[Koszul gradings on Brauer algebras]{Koszul gradings on Brauer algebras}
\author{Michael Ehrig}
\author{Catharina Stroppel}

\address{Department of Mathematics, University of Bonn, 53115 Bonn, Germany}
\email{mehrig@math.uni-bonn.de}
\address{Department of Mathematics, University of Bonn, 53115 Bonn, Germany}
\email{stroppel@math.uni-bonn.de}

\begin{abstract}
We show that the Brauer algebra ${\rm Br}_d(\delta)$ over the complex numbers for an integral parameter $\delta$ can be equipped with a grading, in the case of $\delta \neq 0$ turning it into a graded quasi-hereditary algebra. In which case it is Morita equivalent to a Koszul algebra. This is done by realizing the Brauer algebra as an idempotent truncation of a certain level two VW-algebra for some large positive integral parameter $N$. The parameter $\delta$ appears then in the choice of a cyclotomic quotient. This cyclotomic VW-algebra arises naturally as an endomorphism algebra of a certain projective module in parabolic category $\mathcal{O}$ of type $\rm D$. In particular, the graded decomposition numbers are given by the associated parabolic Kazhdan-Lusztig polynomials.
\end{abstract}

\thanks{M.E. was financed by the DFG Priority program 1388, C.S. thanks the MPI in Bonn for excellent working conditions and financial support.}
\maketitle
\tableofcontents

\section{Introduction}
\renewcommand{\thetheorem}{\Alph{theorem}}
We fix as ground ring the complex numbers $\mathbb{C}$. Given an integer $d\geq 1$ and  $\delta\in\mC$, the  associated {\it Brauer algebra} ${\rm Br}_d(\delta)$ is a diagrammatically defined algebra with basis all Brauer diagrams on $2d$ points, that is all possible matchings of $2d$ points with $d$ strands, such that each point is the endpoint of exactly one strand. In other words, the basis elements correspond precisely to subdivisions of the set of $2d$ points into subsets of order $2$. Here is an example of such a Brauer diagram (with $d=11$):
\begin{equation}
\label{diagram}
\begin{tikzpicture}[thick,>=angle 90]
\begin{scope}[xshift=8cm]
\draw (0,0) -- +(0,1);
\draw (.6,0) -- +(.6,1);
\draw (1.2,0) -- +(-.6,1);
\draw (1.8,0) to [out=90,in=-180] +(.9,.5) to [out=0,in=90] +(.9,-.5);
\draw (1.8,1) to [out=-90,in=-180] +(.6,-.4) to [out=0,in=-90] +(.6,.4);
\draw (2.4,0) -- +(0,1);
\draw (3,0) -- +(.6,1);
\draw (4.2,0) -- +(0,1);
\draw (4.8,0) to [out=90,in=-180] +(.3,.3) to [out=0,in=90] +(.3,-.3);
\draw (4.8,1) to [out=-90,in=-180] +(.3,-.3) to [out=0,in=-90] +(.3,.3);
\draw (6,0) -- +(0,1);
\end{scope}
\end{tikzpicture}
\end{equation}
The multiplication is given on these basis vectors by a simple diagrammatical rule: we fix the positions of the $2d$ points as in the diagram \eqref{diagram} with $d$ points at the bottom and $d$ points at the top. Then the product $bb'$ is equal to $\delta^k (b'\circ b)$, where $b'\circ b$ is the Brauer diagram obtained from the two basis elements $b$ and $b'$ by stacking $b'$ on top of $b$ (identifying the bottom points of $b'$ with the top points of $b$ in the obvious way) and then turning the result into a Brauer diagram by removing all internal circles, and $k$ is the number of such circles removed. For instance:
\begin{equation}
\label{multiplication}
\begin{tikzpicture}[thick,>=angle 90]
\begin{scope}
\draw (0,0) -- +(0,1);
\draw (.6,0) -- +(.6,1);
\draw (1.2,0) to [out=90,in=-180] +(.3,.3) to [out=0,in=90] +(.3,-.3);
\draw (.6,1) to [out=-90,in=-180] +(.6,-.4) to [out=0,in=-90] +(.6,.4);
\node at (2.4,.5) {$\bullet$};
\end{scope}
\begin{scope}[xshift=3cm]
\draw (0,1) to [out=-90,in=-180] +(.3,-.3) to [out=0,in=-90] +(.3,.3);
\draw (1.2,1) to [out=-90,in=-180] +(.3,-.3) to [out=0,in=-90] +(.3,.3);
\draw (0,0) to [out=90,in=-180] +(.6,.4) to [out=0,in=90] +(.6,-.4);
\draw (.6,0) to [out=90,in=-180] +(.6,.4) to [out=0,in=90] +(.6,-.4);
\node at (2.4,.5) {$=$};
\end{scope}
\begin{scope}[xshift=6.5cm]
\node at (-.5,.5) {$\delta$};
\draw (0,1) to [out=-90,in=-180] +(.6,-.4) to [out=0,in=-90] +(.6,.4);
\draw (.6,1) to [out=-90,in=-180] +(.6,-.4) to [out=0,in=-90] +(.6,.4);
\draw (0,0) to [out=90,in=-180] +(.6,.4) to [out=0,in=90] +(.6,-.4);
\draw (.6,0) to [out=90,in=-180] +(.6,.4) to [out=0,in=90] +(.6,-.4);
\end{scope}
\end{tikzpicture}
\end{equation}

Brauer algebras form important examples of cellular diagram algebras in the sense of \cite{GrahamLehrer}. In particular we have {\it cell modules} (or Specht modules) $\Delta(\la)$ indexed by $\la\in\La_d$. Here 
$$\La_d = \bigcup_{m \in \mathbb{Z}_{\geq 0} \cap (d-2\mathbb{Z}_{\geq 0})} {\rm Par}(m),$$
with ${\rm Par}(m)$ denoting the partitions of the integer $m$. We have simple modules $L(\lambda)$ for $\lambda \in \La_d^\delta$, where $\La_d^\delta = \La_d$ in case of $\delta \neq 0$ and $\La_d^\delta = \La_d \setminus {\rm Par}(0)$ in case of $\delta=0$, see \cite{CDM}.

Although the Brauer algebra can be defined for arbitrary $\delta \in \mathbb{C}$ it turns out that it is always semi-simple for $\delta \notin \mathbb{Z}$, see \cite{Rui}. For our purposes these cases are trivial, hence we will always assume $\delta \in \mathbb{Z}$.

Brauer algebras were originally introduced by Brauer \cite{Brauer} in the context of classical invariant theory as centralizer algebras of tensor products of the natural representation of orthogonal and symplectic Lie algebras. More precisely, assuming $d<n$ there is a canonical isomorphism of algebras
\begin{eqnarray}
\label{centralizer}
\End_\mg(V^{\otimes d})&\cong& {\rm Br}_d(N)
\end{eqnarray}
where $\mg$ is an orthogonal or symplectic Lie algebra of rank $n$ with vector representation $V$ of dimension $N$ in the orthogonal case and dimension $-N$ in the symplectic case, see e.g. \cite{CP} or \cite{GW} for details. 

As an algebra, the Brauer algebra is generated by the following elements $t_{i}$, $g_i$ 
\begin{equation}
\label{diag}
\begin{tikzpicture}[scale=0.7,thick,>=angle 90]
\node at (0,.5) {$t_i$};
\draw (.6,0) -- +(0,1);
\draw [dotted] (1,.5) -- +(1,0);
\draw (2.4,0) -- +(0,1);
\draw (3,0) -- +(.6,1) node[above] {\tiny i+1};
\draw (3.6,0) -- +(-.6,1) node[above] {\tiny i};
\draw (4.2,0) -- +(0,1);
\draw [dotted] (4.6,.5) -- +(1,0);
\draw (6.2,0) -- +(0,1);

\begin{scope}[xshift=8cm]
\node at (0,.5) {$g_i$};
\draw (.6,0) -- +(0,1);
\draw [dotted] (1,.5) -- +(1,0);
\draw (2.4,0) -- +(0,1);
\draw (3,0) to [out=90,in=-180] +(.3,.3) to [out=0,in=90] +(.3,-.3);
\draw (3,1) node[above] {\tiny i} to [out=-90,in=-180] +(.3,-.3) to [out=0,in=-90] +(.3,.3) node[above] {\tiny i+1};
\draw (4.2,0) -- +(0,1);
\draw [dotted] (4.6,.5) -- +(1,0);
\draw (6.2,0) -- +(0,1);
\end{scope}
\end{tikzpicture}
\end{equation}
for $1\leq i < d$, and $t_i$ acts on $V^{\otimes d}$ in \eqref{centralizer} by permuting the $i$th and $(i+1)$st tensor factor, and the element $g_{i}$ acts by applying to the $i$th and $(i+1)$st factor the evaluation morphism $V\otimes V=V^*\otimes V\rightarrow \mathbb{C}$ followed by its adjoint.  

The realization \eqref{centralizer} as centralizers includes the cases ${\rm Br}_d(\delta)$ for $\delta\in\mathbb{Z}$ integral and $\delta$ large enough in comparison to $d$. Hence the Brauer algebra is semisimple in these cases. In fact it was shown by Rui, see \cite{Rui}, that  ${\rm Br}_d(\delta)$ is semisimple except for $\delta$ integral of small absolute value, see also \cite{ES3} for a precise statement and references therein for proofs. For arbitrary $\delta\in\mathbb{Z}$ and $d\geq 1$ the Brauer algebras still appear as centralizers of the form \eqref{centralizer} if we replace $\mathfrak{g}$ by an orthosymplectic Lie superalgebra such that its vector representation has super dimension $k|2n$ with $\delta=k-2n$, see \cite{ESSchurWeyl}. \\

Whereas the semisimple cases were studied in detail in many papers, including for instance the semiorthogonal form in \cite{Nazarov}, the non-semisimple cases are still not well understood. In the pioneering work of Cox, De Visscher and Martin, \cite{CDM}, it was observed that the multiplicity $[\Delta(\la)\,:\,L(\mu)]$ how often a simple module $L(\mu)$ indexed by $\mu$ (that is the simple quotient of $\Delta(\mu)$) appears in a Jordan-H\"older series of the cell module $\Delta(\la)$ is given by certain parabolic Kazhdan-Lusztig polynomial $n_{\la,\mu}$ of type ${\rm D}$ with parabolic of type ${\rm A}$, \cite{Boe}, \cite{LS},  i.e. 
\begin{eqnarray}
\label{KL}
[\Delta(\la)\, :\, L(\mu)]&=&n_{\la,\mu}(1).
\end{eqnarray}
This result connects the combinatorics of Brauer algebras with Kazhdan-Lusztig combinatorics of type ${\rm D}$ Lie algebras, i.e. multiplicities of simple (possibly infinite) highest weight modules appearing in a parabolic Verma module. Here two obvious questions arise: Is there an interpretation of the variable $q$ in the Kazhdan-Lusztig polynomial $n_{\la,\mu}(q)\in\mZ[q]$? 
Is there an equivalence of categories between modules over the Brauer algebra  ${\rm Br}_d(\delta)$  for integral $\delta$ and some subcategory of the Bernstein-Gelfand-Gelfand (parabolic) category $\cO$ for type ${\rm D}$ explaining the mysterious match in the combinatorics? In this paper we will give an answer to both questions.

Let us  explain the results in more detail. Given a finite dimensional algebra $A$ we denote by $A-\operatorname{mod}$ its category of finite dimensional modules. If the algebra $A$ is $\mZ$-graded we denote by $A-\operatorname{gmod}$ its category of finite dimensional graded modules with degree preserving morphisms and by $F:A-\operatorname{gmod}\rightarrow A-\operatorname{mod}$ the grading forgetting functor. For $i\in\mathbb{Z}$ let $\langle i\rangle: M\mapsto M\langle i\rangle$ denote the autoequivalence of $A-\operatorname{gmod}$ which shifts the grading by $i$, i.e. $F(M\langle i\rangle )=FM$ and $(M\langle i\rangle )_j=M_{j-i}$ for any $M\in A-\operatorname{gmod}$. As an application of our main theorem below we obtain the following refinement of \eqref{KL}, summing up the results obtained in Section \ref{sec:consequences}:

\begin{theorem}
\label{first}
Let $\delta\in\mathbb{Z}$. The Brauer algebra ${\rm Br}_d(\delta)$ can be equipped with a $\mathbb{Z}$-grading turning it into a $\mathbb{Z}$-graded algebra ${\rm Br}^{\rm gr}_d(\delta)$. Moreover, it satisfies the following: 
\begin{enumerate}[{\rm 1.)}]
\item ${\rm Br}^{\rm gr}_d(\delta)$ is Morita equivalent to a Koszul algebra if and only if $\delta\not=0$ or $\delta=0$ and $d$.
\item ${\rm Br}^{\rm gr}_d(\delta)$ is graded cellular. 
\item ${\rm Br}^{\rm gr}_d(\delta)$ is graded quasi-hereditary if and only if $\delta\not=0$ or $\delta=0$ and $d$ odd. Moreover, in the quasi-hereditary case
\begin{enumerate}[{\rm a.)}]
\item We have distinguished graded lifts of standard modules and simple modules in the following sense: For the cell module $\Delta(\lambda)$ of ${\rm Br}_d(\delta)$, $\lambda \in \La_d$, there exists a module $\widehat{\Delta}(\lambda)$ for  ${\rm Br}^{\rm gr}_d(\delta)$ such that $F\widehat{\Delta}(\lambda) = \Delta(\lambda)$. For a simple module $L(\mu)$ of ${\rm Br}_d(\delta)$, for $\mu \in \La_d^\delta$, there exists a module $\widehat{L}(\mu)$ for  ${\rm Br}^{\rm gr}_d(\delta)$ such that $F\widehat{L}(\mu) = L(\mu)$. Furthermore $\widehat{L}(\mu)$ is the simple quotient of $\widehat{\Delta}(\mu)$ concentrated in degree $0$, making the choice of these lifts unique.
\item The $\widehat{\Delta}(\la)$ form the graded standard modules.
\end{enumerate}
\item The modules $\widehat{\Delta}(\la)$ have a Jordan-H\"older series in ${\rm Br}^{\rm gr}_d(\delta)-\operatorname{gmod}$ with multiplicities  given by 
\begin{eqnarray*}
\left[\widehat{\Delta}(\la)\, :\, \widehat{L}(\mu)<i>\right]&=&n_{\la,\mu,i},
\end{eqnarray*}
where $n_{\la,\mu}(q)=\sum_{i\geq 0}n_{\la,\mu,i} q^i$.
\end{enumerate}
\end{theorem}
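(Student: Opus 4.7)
The strategy is to deduce the grading together with all of its properties from the structural realization announced in the abstract. I would present ${\rm Br}_d(\delta)$ as an idempotent truncation $e\cdot A\cdot e$ of a level-two cyclotomic VW-algebra $A=\VWdcycl$ whose cyclotomic parameters encode a large positive integer $N$ together with $\delta$, and then identify $A$ with $\END_\mg(P)^{\rm op}$ for a projective $P$ in a block of parabolic category $\Op$ of type $\rm D$. Because parabolic $\Op$ of type $\rm D$ carries a Koszul grading (Beilinson--Ginzburg--Soergel, extended to parabolic blocks by Backelin and in type $\rm D$ by subsequent work), the endomorphism algebra of $P$ inherits a positive grading, which transports through the double centralizer property to $A$ and, via the idempotent $e$, to the desired graded algebra ${\rm Br}^{\rm gr}_d(\delta)$.

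The main technical step I would tackle first is a Schur--Weyl-type identification
\begin{equation*}
\VWdcycl \;\cong\; \END_\mg\bigl(\MdV\bigr)^{\rm op}
\end{equation*}
for $\mg=\SO(2n)$ with $n\gg d$ and $\Mde$ a parabolic Verma module whose highest weight is tuned so that the level-two cyclotomic relation yields the classical Brauer parameter $\delta=N-2n$. The Brauer algebra is then cut out as the idempotent subalgebra corresponding to projection onto the pure $\Vd$ summand inside $\MdV$. This is where the real work lies: establishing the double centralizer property in type $\rm D$, pinning down the correct idempotent $e$, and matching the Brauer combinatorics on $\La_d$ with the weight combinatorics of the relevant block of $\Op$. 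Existing Schur--Weyl results are typically stated in type $\rm A$ or for BMW rather than for the Brauer/VW pair with a type $\rm D$ partner, so I expect this step to be the principal obstacle.

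Once the identification is in place, the remaining assertions follow a fairly uniform pattern. Graded cellularity (part 2) is obtained by lifting the diagrammatic cellular basis of ${\rm Br}_d(\delta)$ to homogeneous elements compatible with the graded standard filtration on $P$, in the spirit of Hu--Mathas for cyclotomic KLR algebras. The graded quasi-hereditary statement (part 3) reduces to checking that the weights selected by $e$ form a saturated subset of the weight poset of $\Op$; the dichotomy ``$\delta\ne 0$ or ($\delta=0$ and $d$ odd)'' is exactly the combinatorial condition that ensures saturation, the failure otherwise being caused by the empty partition sitting at the boundary of the truncation. In the quasi-hereditary case, the graded lifts $\widehat{\Delta}(\la)$ and $\widehat{L}(\mu)$ are the standard and simple objects of the induced graded highest weight structure, normalized so that simples live in degree $0$, which fixes the lifts uniquely.

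The multiplicity formula of part 4 is then a graded refinement of \eqref{KL}: in Koszul parabolic $\Op$ of type $\rm D$ with parabolic of type $\rm A$, the graded composition multiplicity $[\Mla:L(\mu)\langle i\rangle]$ equals the coefficient $n_{\la,\mu,i}$ of $q^i$ in the parabolic Kazhdan--Lusztig polynomial $n_{\la,\mu}(q)$ of Boe and Lascoux--Sch\"utzenberger (this is Backelin's graded refinement). Under the Morita translation induced by $e$, the parabolic Verma module corresponds to $\widehat{\Delta}(\la)$ and the simple to $\widehat{L}(\mu)$, yielding the claimed formula. Finally, the Koszulity of ${\rm Br}^{\rm gr}_d(\delta)$ up to Morita equivalence (part 1) is obtained from the Koszulity of the ambient block by invoking the general fact that idempotent truncation by a saturated set of simples preserves Koszulity; this fact, and the saturation hypothesis, fails precisely in the excluded case $\delta=0$ with $d$ even, matching the claim.
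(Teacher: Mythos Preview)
Your global architecture matches the paper's: realize ${\rm Br}_d(\delta)$ as an idempotent truncation of a level-two cyclotomic VW-algebra, identify the latter with an endomorphism algebra in parabolic category $\cO$ of type ${\rm D}$, and import the Koszul grading and highest weight structure from there. The deductions in parts 1--4 from this setup are also the ones the paper makes (with König--Xi for cellularity, Donkin-style quotient functor arguments for the multiplicities, and the saturation dichotomy for quasi-heredity).

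However, two key points of your mechanism are off and would not work as stated. First, the parameter $\delta$ does \emph{not} enter as $N-2n$: in the paper $N=2n$ is a single large integer, and $\delta$ is encoded in the highest weight $\underline{\delta}=\tfrac{\delta}{2}\sum_i\varepsilon_i$ of the parabolic Verma module $\Mde$; the cyclotomic parameters are $\alpha=\tfrac12(1-\delta)$, $\beta=\tfrac12(\delta+N-1)$. Second, the idempotent $e=\ff$ is not a projection onto a ``pure $\Vd$ summand'' of $\MdV$ (there is no such summand); it is the spectral projection onto the generalized eigenspaces of the polynomial generators $y_k$ with \emph{small} eigenvalues (those avoiding $\beta$). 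These two corrections matter because they determine what you have to prove.

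The main gap is that you treat the identification ${\rm Br}_d(\delta)\cong\ff\,\VWdcycl\,\ff$ as a routine Schur--Weyl/double-centralizer step. It is not. The naive truncations $\ff s_k\ff$, $\ff e_k\ff$ do \emph{not} satisfy the Brauer relations (already for $d=2$); the paper's core technical content is the explicit isomorphism $\Phi_\delta$ sending $t_k\mapsto -Q_ks_kQ_k+\tfrac{1}{b_k}\ff$ and $g_k\mapsto Q_ke_kQ_k$, with the correction factors $Q_k=\sqrt{b_{k+1}/b_k}\,\ff$ and $b_k=\beta+y_k$, and a lengthy verification that these images satisfy all Brauer relations (in particular $\tilde e_k^2=\delta\tilde e_k$ rather than $N\tilde e_k$). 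Without these correction terms there is no isomorphism, hence no grading, and the rest of the argument does not get started. Your plan should flag this explicit isomorphism (and the square-root formalism needed to make sense of $Q_k$ in the non-semisimple case) as the principal obstacle, rather than the Schur--Weyl identification $\VWdcycl\cong\End_\mg(\MdV)^{\rm op}$, which is taken from prior work.
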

For instance, ${\rm Br}^{\rm gr}_2(\delta)$ is isomorphic to the algebra $\mathbb{C}\oplus\mathbb{C}\oplus\mathbb{C}$ in case $\delta\not=0$ whereas it is isomorphic to $\mathbb{C}\oplus \mathbb{C}[x]/(x^2)$ with $x$ in degree $2$, see Section~\ref{sec:example}.\\

The above result is based on our main theorem which realizes ${\rm Br}_d(\delta)$ for integral $\delta$ as an {\it idempotent truncation} of a level $2$ cyclotomic quotient of a VW-algebra $\VW_{d}(\Xi)$, see Definition \ref{defalphabeta} for the exact parameter set $\Xi$. Here, the {\it VW-algebra} $\VW_{d}(\Xi)$ is as a vector space isomorphic  to  ${\rm Br}_d(N)\otimes \mC[y_1,\ldots y_d]$ with both factors being in fact subalgebras. The defining relations imply that there is a unique surjective homomomorphism of algebras
\begin{eqnarray}
\VWd(\Xi)&\longrightarrow&{\rm Br}_d(N),
\end{eqnarray}
 which extends the identity on ${\rm Br}_d(N)$ and sends $y_1$ to $0$. The polynomial generators $y_k$ are then sent to the famous {\it Jucys-Murpy elements} $\xi_k$ in the Brauer algebra, see Proposition \ref{jucysmurphy} for a definition. These elements form a commutative subalgebra which plays an important role in the theory of semiorthogonal forms for the Brauer algebras. In this way, the Brauer algebra ${\rm Br}_d(N)$ can be realized naturally as a level 1 cyclotomic quotient of $\VW_{d}(\Xi)$.
 
 The connection to Lie theory however is based on a more subtle realization of the Brauer algebra as follows: Let $n\in\mathbb{Z}$ be large (say $N=2n\geq 2d$) and consider the the type ${\rm D}_n$ Lie algebra $\SO(N)$ of rank $n$ with its vector representation $V$. 
 Let $\varpi_0$ be the fundamental weight corresponding to a spin representation (that is to one of the fork ends in the Dynkin diagram) and let $\pp\subset \SO(N)$ be the (type $A$) maximal parabolic corresponding to the simple roots orthogonal to $\varpi_0$. For any fixed $\delta\in\mathbb{Z}$ let $M^\pp(\delta)$ be the associated parabolic Verma module with highest weight $\delta\omega_0$, see \cite{Humphreys}. Then \cite[Theorem 3.1]{ES3} gives a natural isomorphism of algebras
 \begin{eqnarray}
\END_{\SO(N)}(\MdV)^{\op{opp}}\cong \VWdcycl 
\end{eqnarray}
where $\VWdcycl = \VWd(\Xi)/(y_1-\alpha)(y_1-\beta)$ for $\alpha=\frac{1}{2}(1-\delta)$ and $\beta=\frac{1}{2}(\delta+N-1)$. The finite dimensional algebra $\VWdcycl$ decomposes into simultaneous generalized eigenspaces with respect to $ \mC[y_1,\ldots y_d]$. Let $\ff$ be the idempotent of $\VWdcycl$ which projects onto all common generalized eigenspaces with {\it small} eigenvalues $c_j$ with respect to $y_j$, i.e. where $c_j$ satisfy $|c_j|<\beta$ for $1\leq j\leq d$. 

Our main result, Theorem \ref{thm:main}, is then the following:

\begin{theorem}
For any fixed $\delta\in\mathbb{Z}$, there is an isomorphism of algebras 
\begin{eqnarray*}
\Phi_\delta:&& {\rm Br}_d(\delta)\longrightarrow \ff \VWdcycl \ff
\end{eqnarray*}
given on the standard generators of the Brauer algebra by 
\begin{eqnarray}
\label{difficult1intro}
t_k \quad\longmapsto\quad -Q_k s_k Q_k + \frac{1}{b_k}\ff,&\text{and}& g_k \quad\longmapsto\quad Q_k e_k Q_k.
\end{eqnarray}
\end{theorem}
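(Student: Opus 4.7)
The plan is to construct $\Phi_\delta$ explicitly using the block-decomposition of $\ff\VWdcycl\ff$ with respect to the commutative subalgebra $\mC[y_1,\ldots,y_d]$, to verify that the images of the Brauer generators satisfy all Brauer relations, and finally to show bijectivity via a generation argument combined with a dimension count. The idempotent $\ff$ commutes with $\mC[y_1,\ldots,y_d]$ and decomposes into primitive orthogonal idempotents $\ff=\sum_{\underline{c}} e_{\underline{c}}$ indexed by admissible tuples $\underline{c}=(c_1,\ldots,c_d)$ with $|c_j|<\beta$. On each block $e_{\underline{c}}\VWdcycl e_{\underline{c}}$ the operators $y_j-c_j$ act nilpotently, so any rational function of the $y_j$ whose denominator does not vanish at $\underline{c}$ is well-defined. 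The element $Q_k$ and the scalar $\bk$ are defined block-by-block as specific such rational functions of $y_k,y_{k+1}$, modeled on Nazarov's semi-normal form, pinned down by the two requirements $(-Q_k\sk Q_k+\bk^{-1}\ff)^2=\ff$ and $(Q_k\ek Q_k)^2=\delta\, Q_k\ek Q_k$. Combined with the VW-relations these constraints determine $Q_k$ up to a sign.

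With $Q_k,\bk$ in place, the core of the proof is to verify the remaining Brauer relations on the images. The braid and commutation relations among the $t_k$'s and the tangle relations $g_kg_{k\pm 1}g_k=g_k$ reduce to identities of rational functions in the eigenvalues $c_j$ and can be checked block by block. The delicate relations are the mixed ones $t_kg_k=g_k=g_kt_k$ and $g_kt_{k\pm 1}g_k=g_k$, where the intertwiners $Q_k$ and $Q_{k\pm 1}$ couple with $\sk$ and $\ek$ in a way that mixes different eigenspaces. The small-eigenvalue cutoff $|c_j|<\beta$ imposed by $\ff$ is essential here: it guarantees that no denominators appearing in $Q_k$ or $\bk^{-1}$ vanish. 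I expect this step to be the main obstacle, and a clean computation will likely require a fraction-free reformulation of the formulas together with systematic use of the intertwining identities and the cyclotomic identities governing $\ek y_k \ek$ in $\VWdcycl$.

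Finally, for bijectivity I would argue as follows. For surjectivity, one inverts the defining formulas block by block: from $\Phi_\delta(g_k)=Q_k\ek Q_k$ one recovers $\ff\ek\ff$ up to an invertible factor in $\mC[y_1,\ldots,y_d]\ff$, and then from $\Phi_\delta(t_k)-\bk^{-1}\ff=-Q_k\sk Q_k$ one recovers $\ff\sk\ff$ up to the invertible $Q_k^2$. Since the primitive block idempotents $e_{\underline{c}}$ and the nilpotent parts of the $y_j$ on each block can in turn be extracted from products involving these images, the image generates $\ff\VWdcycl\ff$ as an algebra. Injectivity then follows by a dimension comparison: both ${\rm Br}_d(\delta)$ and $\ff\VWdcycl\ff$ are cellular with cell modules indexed by $\La_d$, and the cellular structure of $\VWdcycl$ established earlier in the paper forces the dimensions of the corresponding cells on the two sides to match, so $\Phi_\delta$ is necessarily an isomorphism.
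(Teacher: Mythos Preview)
Your proposal has a genuine structural gap in the well-definedness step, and it mislocates where the real difficulty lies.

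You propose to verify the Brauer relations ``block by block'' by reducing them to identities of rational functions in the eigenvalues $c_j$. This is exactly the strategy that works in the semisimple case and fails here. The elements $\tilde s_k=-Q_k s_k Q_k+\frac{1}{b_k}\ff$ and $\tilde e_k=Q_k e_k Q_k$ do \emph{not} preserve the individual generalized eigenspaces $e_{\underline{c}}\VWdcycl$: by Lemma~\ref{lem:eigenvalue_and_e} and Corollary~\ref{cor:eigenvalue_and_s}, both $s_k$ and $e_k$ genuinely mix the blocks, and in the non-semisimple situation the nilpotent parts of the $y_j$ intervene nontrivially. So a relation such as $\tilde e_k^2=\delta\,\tilde e_k$ is not a scalar identity one can check eigenvalue by eigenvalue; it is an operator identity that must be proved in $\VWdcycl$ itself. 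Remark~\ref{theremark} makes precisely this point: the tempting substitution $u=-\beta$ into Nazarov's generating series $e_k\frac{1}{u-y_k}e_k\ff=\frac{W_k(u)}{u}e_k\ff$ is what would make your plan work, but it cannot be justified outside the semisimple case.

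You also invert the hierarchy of difficulty. The genuinely hard relations are $\tilde e_k^2=\delta\,\tilde e_k$ and the braid relation $\tilde s_k\tilde s_{k+1}\tilde s_k=\tilde s_{k+1}\tilde s_k\tilde s_{k+1}$, not the mixed ones. The first hinges on the operator identity $e_k\frac{1}{b_k}e_k\ff=(1+\tfrac{1}{2\beta})e_k\ff$ (Proposition~\ref{prop:reduce_fracs}\,(iii)), which the paper proves by a nontrivial induction on $k$; nothing in your outline produces this. The braid relation (Proposition~\ref{lem:si_braid}) is a long direct computation that repeatedly uses Proposition~\ref{prop:kill_f} to remove intermediate copies of $\ff$ and Lemma~\ref{lem:commute_b} to commute $b_j^{\pm1}$ past $s_k$; it does not reduce to a rational-function identity in the $c_j$. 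By contrast the mixed relations $\tilde e_k\tilde s_k=\tilde e_k$ and $\tilde s_k\tilde e_{k+1}\tilde e_k=\tilde s_{k+1}\tilde e_k$ are comparatively short once Proposition~\ref{prop:reduce_fracs} is in hand.

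Your surjectivity sketch is closer in spirit to the paper but skips the essential step. Recovering $\ff s_k\ff$ and $\ff e_k\ff$ is fine (and indeed $y_k\ff$ lies in the image via the Jucys--Murphy elements, Proposition~\ref{lem:jucysmurphy}), but this does \emph{not} immediately give elements like $\ff s_k s_{k+1}\ff$, because $\ff$ is not central. The paper handles this by an induction on word length using the $c_l$-insertion trick together with Proposition~\ref{prop:kill_f}, which lets one strip the intermediate $\ff$'s; your proposal contains no substitute for this. The dimension count $(2d-1)!!$ for injectivity is correct and is what the paper uses.
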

for certain elements $Q_k$ and $b_k$, defined in \eqref{19} and \eqref{20}, which can be expressed in term of the polynomial generators $y_j$, $1\leq j \leq d$ and $\beta$.  

 Note that the idempotent truncation is independent of $N$ (as long $N$ is large enough), but the right hand side as well as the map do depend on $N$. In particular, the parameter $N$ on the right hand side changes into the parameter $\delta$ on the left hand side. Under the isomorphism, the Jucys-Murphy elements $\xi_k$ of the Brauer algebra are sent to $-y_k \ff$ inside $\ff \VWdcycl \ff$, see Proposition \ref{jucysmurphy}. In particular, generalized eigenspaces for Jucys-Murphy elements coincide with generalized eigenspaces of the polynomial generators $y_k$.
 
 By general theory on category $\mathcal{O}$, \cite{BGS}, \cite{Backelin},  the algebra $\VWdcycl$ can be equipped with a positive $\mathbb{Z}$-grading, see \cite[Theorem 3.1]{ES3}. Since the idempotent truncation $\mathbf{f}$ corresponds to successive projections onto blocks, see \cite[Section 4.1]{ES3}, $B_d(\delta)$ inherits a grading which is then the grading in Theorem \ref{first}. In contrast to a general block in category $\mathcal{O}$, the grading can be made totally explicit in our case using the theory of generalized Khovanov algebras of type $D$, \cite{ES3}. Note that the theorem implies that all of the combinatorics developed in \cite{ES3} for $\ff \VWdcycl \ff$ are now applicable to the Brauer algebra. 
 
 As an application  of our result note that understanding the degree of non-semisimplicity for Brauer algebras and decomposition numbers in the non-semisimple cases gives some first insight into the structure of the tensor product of the natural module for the orthosymplectic Lie superalgebra via the result from \cite{ESSchurWeyl}, or \cite{LZ}.\\
 
 The idea and difficulty behind the formulas \eqref{difficult1intro} stems from the fact that we connect directly the semiorthogonal form for $\VWdcycl$ from \cite{AMR} and for  ${\rm Br}_d(\delta)$ from \cite{Nazarov} by realizing the latter as obtained from the first via a naive idempotent truncation to small eigenvalues corrected with some extra terms encoded in the rather complicated elements $Q_k$ and $b_k$. The correction terms seriously depend on the generalized weight spaces. In the semisimple case, i.e if all generalized eigenvectors are actually proper eigenvectors, the formulas simplify drastically, since the operators act on an eigenspace just by a certain number which can be expressed combinatorially in terms of contents of tableaux. The point here is that our formulas also work in the non-semisimple cases. Then the definitions of the correction terms involves (inverses) of square roots. It is a nontrivial result, that the operators are well-defined and that the images of the generators of the Brauer algebra satisfy the Brauer algebra relations. 
 
 We should mention that a similar result for walled Brauer algebras was obtained in \cite{SS2}. The two results are independent and, as far as we can see,  neither of the two implies the other. In fact, the result \cite[Lemma 8.1]{SS2} seriously simplifies the setup treated there, but doesn't hold in the Brauer algebra setting. As a result, the Brauer algebra requires a very different treatment. 
 
 Another approach defining gradings on Brauer algebras arising from semiorthogonal forms was taken independently in \cite{Li} and resulted in a KLR-type presentation of Brauer algebras. One can show that the two algebras are actually isomorphic as graded algebras, see \cite{LiS}. In particular, parabolic category $\mathcal{O}$ of type $D$ should provide a general framework for higher level cyclotomic quotients of VW-algebras provide some KLR-type presentations similar to the beautiful results in type A, see e.g. \cite{BK}, \cite{BS3}, \cite{HM}. 

{\bf Acknowledgment} We like to thank Jonathan Brundan and Antonio Sartori for many helpful discussions. 

\section{Brauer algebra and VW algebras}
\renewcommand{\thetheorem}{\arabic{section}.\arabic{theorem}}
We start with the definition of the Brauer algebra in terms of generators and relations.  Then we recall the definition of its degenerate affine analogue, the so-called VW-algebra with its cyclotomic quotients. 
By an algebra we always mean an associative  unitary algebra with unit $1$.

\begin{definition}
Let $d \in \mN$ and $\delta \in \mathbb{C}$. The {\it Brauer algebra} ${\rm Br}_d(\delta)$ is the associative $\mathbb{C}$-algebra generated by elements $t_i$, $g_i$, $1\leq i\leq d-1$ subject to the relations
\begin{equation*}
\begin{array}{lll}
t_i^2=1, & t_i t_j= t_j t_i \text{ for } |i-j|>1, & t_i t_{i+1} t_i = t_{i+1} t_i t_{i+1}, \\
g_i^2=\delta g_i, & g_i g_j= g_j g_i \text{ for } |i-j|>1, & g_i g_{i+1} g_i = g_i, \\
t_i g_i= g_i = g_i t_i, & g_i t_j= t_j g_i \text{ for } |i-j|>1, &g_{i+1} g_{i} g_{i+1}= g_{i+1}, \\
t_i g_{i+1} g_i = t_{i+1} g_i, & t_{i+1} g_{i} g_{i+1} = t_{i} g_{i+1}.&
\end{array}
\end{equation*}
whenever the terms in the expressions are defined. 
\end{definition}


\begin{remark} \label{rem:different_rel}
Since $g_i g_{j} g_i = g_i t_i g_{j} g_i = g_i t_j g_i$ one can replace the relation $g_i t_{j} g_i = g_i$ for $|i-j| = 1$ by the relation $g_{i} g_{j} g_{i}= g_{i}$.
\end{remark}

\begin{define}
\label{def:VW}
Let $d \in \mN$ and $\Xi = (\omega_i)_{i \in \mathbb{N}}$ with $\omega_i \in
\mathbb{C}$ for all $i$. Then the associated \emph{VW-algebra} $\VWd(\Xi)$ is the algebra generated by
\begin{eqnarray}
s_i, e_i, y_j&&1 \leq i \leq d-1, 1 \leq i \leq d, k \in \mN,
\end{eqnarray}
subject to the following relations (for $1
\leq a,b \leq d-1$, $1 \leq c < d-1$, and $1 \leq i,j \leq d$):
\begin{enumerate}[(VW.1)]
\item $s_a^2 = 1$ \label{1}
\item
\begin{enumerate}
\item $s_as_b = s_bs_a$ for $\mid a-b \mid > 1$ 	 \label{2a}
\item $s_c s_{c+1} s_c = s_{c+1} s_c s_{c+1}$ 	 \label{2b}
\item $s_ay_i = y_is_a$ for $i \not\in \{a,a+1\}$ \label{2c}
\end{enumerate}
\item $e_a^2 = \omega_0 e_a$ \label{3}
\item $e_1y_1^ke_1 = \omega_k e_1$ for $k \in \mN$ \label{4}
\item
\begin{enumerate}
\item $s_ae_b = e_bs_a$ and $e_ae_b = e_be_a$ for $\mid a-b \mid > 1$
\label{5a}
\item $e_ay_i = y_ie_a$ for $i \not\in \{a,a+1\}$ \label{5b}
\item $y_iy_j = y_jy_i$ \label{5c}
\end{enumerate}
\item
\begin{enumerate}
\item $e_as_a = e_a = s_ae_a$ 	\label{6a}
\item $s_ce_{c+1}e_c = s_{c+1}e_c$ and $e_ce_{c+1}s_c = e_cs_{c+1}$
\label{6b}
\item $e_{c+1}e_cs_{c+1} = e_{c+1}s_c$ and $s_{c+1}e_ce_{c+1} =
s_ce_{c+1}$ \label{6c}
\item $e_{c+1}e_ce_{c+1} = e_{c+1}$ and $e_ce_{c+1}e_c = e_c$ 	\label{6d}
\end{enumerate}
\item $s_ay_a - y_{a+1}s_a = e_a - 1$ and $y_as_a - s_ay_{a+1} = e_a - 1$
\label{7}
\item
\begin{enumerate}
\item $e_a(y_a+y_{a+1}) = 0$ \label{8a}
\item $(y_a+y_{a+1})e_a = 0$ \label{8b}
\end{enumerate}
\end{enumerate}
\end{define}

\begin{remark}
The VW-algebra $\VWd$ is a degeneration of the affine BMW-algebra~\footnote{as every car lover can probably imagine easily ...}, \cite{DRV}, hence plays the analogue role for the Brauer algebra as the degenerate affine Hecke algebra for the symmetric group.  It was introduced originally by Nazarov in \cite{Nazarov} under the name generalized Wenzl-algebra \footnote{which translated to German is {\it Verallgemeinerte Wenzl Algebra}, abbreviated as $\bigdoublevee$. It is also sometimes called {\it Nazarov-Wenzl algebra} in the literature. Hence $\bigdoublevee$ can be viewed as composed of the letters $N$, $W$ and $V$ as well.}.
\end{remark}


Finally we introduce, following \cite{AMR},  the cyclotomic quotients of $\VWd$ of level $\ell$:

\begin{definition}
Given $\mathbf{u}=(u_1, u_2,\ldots, u_\ell)\in \mathbb{C}^\ell$ we denote by $\VWd(\Xi;{\bf u})$ the quotient
\begin{eqnarray}
\VWd(\Xi,{\bf u})&=&\VWd(\Xi)/\prod_{i=1}^\ell(y_1-u_i)
\end{eqnarray}
and call it the \emph{cyclotomic VW-algebra of level $\ell$} with parameters $\bf{u}$.
\end{definition}

\begin{remark}
As explained in \cite{AMR}, the tuple $\Xi$ must satisfy some admissibility condition for the algebra $\VWd(\Xi)$ to have a nice basis. Furthermore, the $\Xi$ must satisfy some $\mathbf{u}$-admissibility condition for this basis to be compatible with the quotient, see \cite[Theorem A, Prop. 2.15]{AMR}.
\end{remark}


Inside the VW-algebra $\VWd(\Xi)$, the elements $\{y_k | 1 \leq k \leq d \}$ generate a free commutative subalgebra, hence we can consider the simultaneous generalized eigenspace decompositions for these elements. Any finite dimensional $\VWd(\Xi)$-module $M$ has a  decomposition
\begin{eqnarray}
\label{eigspaces}
M &=& \bigoplus_{\mathbf{i} \in \mathbb{C}^d} M_{\mathbf{i}},
\end{eqnarray}
where $M_{\textbf{i}}$ is the generalized eigenspace with eigenvalue $\mathbf{i}$, i.e., $(y_k - \mathbf{i}_k)^N M_{\mathbf{i}} = 0$ for $N \gg 0$ sufficiently large. We first describe how the generators $e_k$ and $s_k$ interact with this eigenspace decomposition.

\begin{lemma} \label{lem:eigenvalue_and_e}
For all $1 \leq k < d$ the following holds
$$ e_k M_{\mathbf{i}} \subset \left\lbrace 
\begin{array}{ll}
\{0\} & \text{if } \mathbf{i}_k + \mathbf{i}_{k+1} \neq 0, \\
\bigoplus_{\mathbf{i}' \in {\rm I}} M_{\mathbf{i}'} & \text{if } \mathbf{i}_k + \mathbf{i}_{k+1} = 0,
\end{array} \right.$$
where ${\rm I} = \{\mathbf{i}' \in \mathbb{C}^d | \mathbf{i}_j' = \mathbf{i}_j \text{ for } j \neq k,k+1 \text{ and } \mathbf{i}_k' + \mathbf{i}_{k+1}' = 0\}$.
\end{lemma}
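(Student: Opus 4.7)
The plan is to combine the commutation relation $e_k y_j = y_j e_k$ for $j \notin \{k,k+1\}$ from (VW.5b) with the two annihilation relations $e_k(y_k + y_{k+1}) = 0$ from (VW.8a) and $(y_k + y_{k+1}) e_k = 0$ from (VW.8b). As a preliminary observation, for any $v \in M_\mathbf{i}$, any $j \notin \{k,k+1\}$, and $N \gg 0$, (VW.5b) yields $(y_j - \mathbf{i}_j)^N e_k v = e_k (y_j - \mathbf{i}_j)^N v = 0$. Hence $e_k v$ has generalized $y_j$-eigenvalue $\mathbf{i}_j$ for every such $j$, so that in any case $e_k v \in \bigoplus M_{\mathbf{i}'}$ with $\mathbf{i}'_j = \mathbf{i}_j$ whenever $j \notin \{k,k+1\}$. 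It remains to analyse the $k$ and $(k{+}1)$-coordinates.

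For the first case, assume $\mathbf{i}_k + \mathbf{i}_{k+1} \neq 0$. Since $y_k$ and $y_{k+1}$ commute by (VW.5c), the operator $y_k + y_{k+1}$ acts on $M_\mathbf{i}$ as $(\mathbf{i}_k + \mathbf{i}_{k+1})\cdot \mathrm{id} + N_{k,k+1}$ with $N_{k,k+1}$ nilpotent, so it is invertible with inverse a polynomial in $y_k + y_{k+1}$; in particular this inverse preserves $M_\mathbf{i}$. Given $v \in M_\mathbf{i}$, write $v = (y_k + y_{k+1}) u$ with $u \in M_\mathbf{i}$. Then $e_k v = e_k (y_k + y_{k+1}) u = 0$ by (VW.8a), as desired.

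For the second case, decompose $e_k v = \sum_{\mathbf{i}'} w_{\mathbf{i}'}$ into its generalized eigenspace components, with only those $\mathbf{i}'$ appearing that agree with $\mathbf{i}$ outside $\{k,k+1\}$ by the preliminary observation. Because all $y_j$ commute by (VW.5c), the projector onto each $M_{\mathbf{i}'}$ can be written as a polynomial in $y_1,\ldots,y_d$ (via the simultaneous Chinese-Remainder decomposition), and hence commutes with $y_k + y_{k+1}$. Applying this to $(y_k + y_{k+1}) e_k v = 0$ from (VW.8b) gives $(y_k + y_{k+1}) w_{\mathbf{i}'} = 0$ for each $\mathbf{i}'$. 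But on $M_{\mathbf{i}'}$, $y_k + y_{k+1}$ is, by the same nilpotent-plus-scalar argument, invertible whenever $\mathbf{i}'_k + \mathbf{i}'_{k+1} \neq 0$, forcing $w_{\mathbf{i}'} = 0$ except when $\mathbf{i}'_k + \mathbf{i}'_{k+1} = 0$. This is precisely the claimed containment in $\bigoplus_{\mathbf{i}' \in \mathrm{I}} M_{\mathbf{i}'}$. I do not anticipate a serious obstacle; the only point deserving care is justifying that the inverse of $y_k + y_{k+1}$ on a generalized eigenspace is a polynomial in $y_k + y_{k+1}$ (so that it preserves the eigenspace), which follows at once from the nilpotency of $y_k + y_{k+1} - (\mathbf{i}_k + \mathbf{i}_{k+1})$ on $M_\mathbf{i}$.
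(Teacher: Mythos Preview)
Your proof is correct and follows essentially the same approach as the paper's: both cases hinge on (VW.8a) together with the invertibility of $y_k+y_{k+1}$ on $M_{\mathbf{i}}$ when $\mathbf{i}_k+\mathbf{i}_{k+1}\neq 0$, and on (VW.8b) to force the eigenvalues of $y_k$ and $y_{k+1}$ on the image of $e_k$ to sum to zero. You are simply more explicit about the preliminary step (using (VW.5b) to control the coordinates $j\notin\{k,k+1\}$) and about the projector argument, which the paper leaves implicit.
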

\begin{proof}
Assume first that $a := \mathbf{i}_k+\mathbf{i}_{k+1} \neq 0$. Then the endomorphism induced by $(y_k+y_{k+1} - a)$ is nilpotent on $M_\mathbf{i}$ and hence $y_k+y_{k+1}$ induces an automorphism of $M_\mathbf{i}$. Hence $e_k M_\mathbf{i} = e_k (y_k + y_{k+1})M_\mathbf{i} = \{0\}$, where for the last equality Relation~(VW.\ref{8a}) was used.

Now assume $\mathbf{i}_k+\mathbf{i}_{k+1} = 0$. Since $(y_k + y_{k+1})e_k = 0$ by Relation~(VW.\ref{8b}) we know that on the image of $e_k$ the endomorphism induced by $y_k+y_{k+1}$ has eigenvalue $0$, hence $y_k$ and $y_{k+1}$ have eigenvalues that add up to $0$.
\end{proof}

The situation for $s_k$ is more complicated and we need the following preparation:

\begin{lemma} \label{lem:eigenvalue_and_s}
Let $\psi_k = s_k(y_k - y_{k+1}) +1$, then for all $1 \leq k \leq d-1$ it holds
$$ \psi_k M_{\mathbf{i}} \subset \left\lbrace 
\begin{array}{ll}
M_{s_k \mathbf{i}} & \text{if } \mathbf{i}_k + \mathbf{i}_{k+1} \neq 0, \\
\bigoplus_{\mathbf{i}' \in {\rm I}} M_{\mathbf{i}'} & \text{if } \mathbf{i}_k + \mathbf{i}_{k+1} = 0,
\end{array} \right.$$
In case $\mathbf{i}_k+\mathbf{i}_{k+1}\neq 0$ and additionally $|\mathbf{i}_k-\mathbf{i}_{k+1}| \neq 1$, the map $\psi_k$ defines an isomorphism of vector spaces $M_\mathbf{i} \cong M_{s_k(\mathbf{i})}$.
\end{lemma}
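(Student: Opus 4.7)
The plan is to commute each $y_j$ past $\psi_k$ using the VW-relations, track the correction terms that appear, and read off the consequences for the generalized eigenspace decomposition.

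For $j\notin\{k,k+1\}$, relations (VW.\ref{2c}) and (VW.\ref{5c}) give $y_j\psi_k=\psi_k y_j$ immediately. For the remaining two indices, the two halves of relation (VW.\ref{7}) combine to
\[
s_k(y_k-y_{k+1}) \;=\; -(y_k-y_{k+1})s_k + 2(e_k-1).
\]
Left-multiplying by $y_k$ and $y_{k+1}$ in turn, and using $e_k y_{k+1}=-e_k y_k$ from (VW.\ref{8a}), one obtains in $\VWd(\Xi)$ the commutators
\[
y_k\psi_k-\psi_k y_{k+1} \;=\; 2e_k y_k, \qquad y_{k+1}\psi_k-\psi_k y_k \;=\; -2e_k y_k,
\]
and in particular the clean identity $(y_k+y_{k+1})\psi_k = \psi_k(y_k+y_{k+1})$.

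Now fix $v\in M_{\mathbf{i}}$. For $j\neq k,k+1$ one has $(y_j-\mathbf{i}_j)^N\psi_k v = \psi_k(y_j-\mathbf{i}_j)^N v = 0$ for $N\gg 0$. Suppose $\mathbf{i}_k+\mathbf{i}_{k+1}\neq 0$. Lemma \ref{lem:eigenvalue_and_e} gives $e_k M_{\mathbf{i}}=0$, and since $y_k$ stabilises $M_{\mathbf{i}}$ also $e_k y_k v=0$, so the first commutator identity collapses on $v$ to $(y_k-\mathbf{i}_{k+1})\psi_k v = \psi_k(y_{k+1}-\mathbf{i}_{k+1})v$. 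Iterating (the vector $(y_{k+1}-\mathbf{i}_{k+1})^m v$ remains in $M_{\mathbf{i}}$) yields $(y_k-\mathbf{i}_{k+1})^N\psi_k v = \psi_k(y_{k+1}-\mathbf{i}_{k+1})^N v = 0$ for large $N$, and an identical argument from the second commutator gives $(y_{k+1}-\mathbf{i}_k)^N\psi_k v=0$. Thus $\psi_k v\in M_{s_k\mathbf{i}}$. If instead $\mathbf{i}_k+\mathbf{i}_{k+1}=0$, the individual error terms $\pm 2e_k y_k v$ no longer vanish, but $(y_k+y_{k+1})\psi_k = \psi_k(y_k+y_{k+1})$ still holds, giving $(y_k+y_{k+1})^N\psi_k v = \psi_k(y_k+y_{k+1})^N v = 0$; combined with the information on the remaining $y_j$'s this forces $\psi_k v\in\bigoplus_{\mathbf{i}'\in{\rm I}}M_{\mathbf{i}'}$.

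For the isomorphism claim, I would compute $\psi_k^2$ directly. Using the identity $s_k(y_k-y_{k+1})=-(y_k-y_{k+1})s_k+2(e_k-1)$ together with $e_k(y_k-y_{k+1}) = 2e_k y_k$, the cross-terms telescope and produce
\[
\psi_k^2 \;=\; 1-(y_k-y_{k+1})^2 + 4e_k y_k.
\]
Under the assumption $\mathbf{i}_k+\mathbf{i}_{k+1}\neq 0$, both $M_{\mathbf{i}}$ and $M_{s_k\mathbf{i}}$ are annihilated by $e_k$, so $\psi_k^2$ acts on each as $1-(y_k-y_{k+1})^2$, which has generalized eigenvalue $1-(\mathbf{i}_k-\mathbf{i}_{k+1})^2$; the hypothesis $|\mathbf{i}_k-\mathbf{i}_{k+1}|\neq 1$ makes this scalar nonzero, so $\psi_k^2$ is invertible on each space. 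Since $\psi_k\colon M_{\mathbf{i}}\to M_{s_k\mathbf{i}}$ and $\psi_k\colon M_{s_k\mathbf{i}}\to M_{\mathbf{i}}$ compose in either order to these invertible $\psi_k^2$'s, both restrictions are bijective, giving the asserted isomorphism. The one place where care is needed is the iteration in the generic case: the correction $2e_k y_k$ vanishes at each step \emph{only} because $y_k$ preserves $M_{\mathbf{i}}$ and $e_k$ kills $M_{\mathbf{i}}$; without both facts the induction would not close.
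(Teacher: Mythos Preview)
Your proof is correct and follows essentially the same approach as the paper: compute the intertwining relations between $\psi_k$ and the $y_j$ via (VW.\ref{7}), use Lemma~\ref{lem:eigenvalue_and_e} to kill the $e_k$-correction when $\mathbf{i}_k+\mathbf{i}_{k+1}\neq 0$, use the commutation with $y_k+y_{k+1}$ otherwise, and deduce invertibility from $\psi_k^2=1-(y_k-y_{k+1})^2$ on $M_{\mathbf{i}}$. The only cosmetic differences are that the paper leaves the error term as $e_k(y_k-y_{k+1})$ rather than rewriting it as $2e_ky_k$ via (VW.\ref{8a}), and computes $\psi_k^2$ directly on $M_{\mathbf{i}}$ (where $e_k$ already vanishes) instead of first obtaining your global formula $\psi_k^2=1-(y_k-y_{k+1})^2+4e_ky_k$; your explicit two-sided invertibility argument on $M_{\mathbf{i}}$ and $M_{s_k\mathbf{i}}$ is in fact slightly more careful than the paper's one-line conclusion.
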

\begin{proof}
From Relation~(VW.\ref{7}) we obtain  $y_k \psi_k = \psi_k y_{k+1} + e_k (y_k-y_{k+1})$ and by Relation~(VW.\ref{2c}) then $y_j \psi_k = \psi_k y_j$ for $|j-k|>1$.

Assume $\mathbf{i}_k + \mathbf{i}_{k+1} \neq 0$ and let $m \in M_{\mathbf{i}}$. Then by Lemma \ref{lem:eigenvalue_and_e} we have $y_k \psi_k m = \psi_k y_{k+1} m$ and $y_{k+1} \psi_k = \psi_k y_{k} m$, hence $(y_i - \mathbf{i}_{s_k(i)})^N \psi_k m = \psi_k (y_{s_k(i)} - \mathbf{i}_{s_k(i)})^N m$ for all $N$ and all $i$. Thus $\psi_k M_\mathbf{i} \subset M_{s_k(\mathbf{i})}$.

Relation~(VW.\ref{7}) implies $(y_k+y_{k+1})\psi_k = \psi_k(y_k+y_{k+1})$. Assuming $i_k + i_{k+1} = 0$ we have $(y_k+y_{k+1})^N M_\mathbf{i} = \{0\}$ and hence $ (y_k+y_{k+1})^N \psi_k M_\mathbf{i} = \psi_k (y_k+y_{k+1})^N M_\mathbf{i} = \{0\}$. Thus, on the image of $\psi_k$, the endomorphism induced by $y_k+y_{k+1}$ has eigenvalue $0$ and therefore $y_k$ and $y_{k+1}$ have eigenvalues adding up to $0$.

Assuming now $\mathbf{i}_k + \mathbf{i}_{k+1} \neq 0$ and furthermore $|\mathbf{i}_k-\mathbf{i}_{k+1}| \neq 1$, then thanks to Relation~(VW.\ref{7}) and Lemma \ref{lem:eigenvalue_and_e} we have  $\psi_k^2=-(y_k-y_{k+1})^2+1$ as an endomorphisms of $M_\mathbf{i}$. Setting $c=\mathbf{i}_k - \mathbf{i}_{k+1}$ it follows $\left( (y_k-y_{k+1}) - c \right)^N M_\mathbf{i} = \{ 0 \}$ for $N \geq 0$ by definition. In particular, as endomorphisms of $M_\mathbf{i}$, this implies 
\begin{eqnarray*}
\psi_k^2 &=& 1-\left((y_k-y_{k+1})-c+c\right)^2 \\
&=&1- c^2 - \left((y_k-y_{k+1})-c \right)^2 - c (y_k-y_{k+1}-c) = 1-c^2-z
\end{eqnarray*}
for some nilpotent endomorphism $z$. Since $c^2 \neq 1$ by assumption, $\psi_k^2$ is invertible and therefore also $\psi_k$. Note that the concrete form of the inverse depends on $\mathbf{i}$.
\end{proof}

\begin{corollary} \label{cor:permute_eigenvalues}
Assume $M_\mathbf{i} \neq \{0\}$ for some $\mathbf{i} = (\mathbf{i}_1,\dots, \mathbf{i}_{d-1}, \mathbf{i}_d)$ such that $\mathbf{i}_k + \mathbf{i}_d \neq 0$ and $|\mathbf{i}_k - \mathbf{i}_d| \neq 1$ for all $k < d$. Let $\mathbf{i}'=(\mathbf{i}_d, \mathbf{i}_1,\dots, \mathbf{i}_{d-1})$, then it holds
$$(y_d - \mathbf{i}_d)^N M_\mathbf{i}=\{0\} \Longleftrightarrow (y_1-\mathbf{i}_d)^N M_{\mathbf{i}'}=\{0\}$$
for all positive integers $N$.
\end{corollary}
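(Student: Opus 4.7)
The idea is to build an explicit isomorphism $M_\mathbf{i} \xrightarrow{\sim} M_{\mathbf{i}'}$ by cycling the entry $\mathbf{i}_d$ to the first position using the operators $\psi_k$ from Lemma~\ref{lem:eigenvalue_and_s}, and then to check that this isomorphism intertwines $y_d$ on the source with $y_1$ on the target. Concretely, set $\Psi := \psi_1 \psi_2 \cdots \psi_{d-1}$. When read right-to-left, the first factor $\psi_{d-1}$ acts on $M_\mathbf{i}$, where the $(d-1,d)$-eigenvalues are $(\mathbf{i}_{d-1},\mathbf{i}_d)$; by hypothesis these satisfy $\mathbf{i}_{d-1}+\mathbf{i}_d\neq 0$ and $|\mathbf{i}_{d-1}-\mathbf{i}_d|\neq 1$, so Lemma~\ref{lem:eigenvalue_and_s} makes $\psi_{d-1}$ an isomorphism onto $M_{s_{d-1}\mathbf{i}}$. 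Inductively, after applying $\psi_{d-1}\psi_{d-2}\cdots\psi_{k+1}$, one lands in the generalized eigenspace whose tuple of eigenvalues is $(\mathbf{i}_1,\dots,\mathbf{i}_k,\mathbf{i}_d,\mathbf{i}_{k+1},\dots,\mathbf{i}_{d-1})$; the next factor $\psi_k$ swaps the entries $(\mathbf{i}_k,\mathbf{i}_d)$, whose sum is nonzero and whose difference has absolute value $\neq 1$ by the hypothesis. Hence every factor in $\Psi$ is an isomorphism on the relevant generalized eigenspace, and so $\Psi: M_\mathbf{i}\xrightarrow{\sim} M_{\mathbf{i}'}$.

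Next, one shows $\Psi\, y_d = y_1\, \Psi$ on $M_\mathbf{i}$. The key identity, already established in the proof of Lemma~\ref{lem:eigenvalue_and_s}, is
\[
y_k \psi_k = \psi_k y_{k+1} + e_k(y_k-y_{k+1}),
\]
so on any eigenspace on which $e_k$ acts as zero one has $\psi_k y_{k+1} = y_k \psi_k$. Combined with $\psi_k y_j = y_j \psi_k$ for $j\notin\{k,k+1\}$ (from (VW.\ref{2c}) and (VW.\ref{5c})), we argue by descending induction on $k$: at the step where $\psi_k$ is applied, the intermediate eigenspace has entries $(\mathbf{i}_k,\mathbf{i}_d)$ at positions $(k,k+1)$; since $\mathbf{i}_k+\mathbf{i}_d\neq 0$, Lemma~\ref{lem:eigenvalue_and_e} gives $e_k = 0$ on this space, so the intertwining $\psi_k y_{k+1} = y_k \psi_k$ applies. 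Telescoping through all factors shows $\Psi y_d m = y_1 \Psi m$ for every $m\in M_\mathbf{i}$.

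Combining the two results, $\Psi(y_d-\mathbf{i}_d)^N = (y_1-\mathbf{i}_d)^N \Psi$ as maps $M_\mathbf{i}\to M_{\mathbf{i}'}$. Because $\Psi$ is a bijection, $(y_d-\mathbf{i}_d)^N$ annihilates $M_\mathbf{i}$ if and only if $(y_1-\mathbf{i}_d)^N$ annihilates $\Psi(M_\mathbf{i}) = M_{\mathbf{i}'}$, which is the claimed equivalence. The only genuinely delicate point is the bookkeeping ensuring that at each stage of the iteration the pair of eigenvalues being permuted is $(\mathbf{i}_k,\mathbf{i}_d)$, so that both the invertibility hypothesis of Lemma~\ref{lem:eigenvalue_and_s} and the vanishing of $e_k$ needed for the intertwining are simultaneously guaranteed by the corollary's single hypothesis on pairs $(\mathbf{i}_k,\mathbf{i}_d)$.
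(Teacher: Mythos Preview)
Your argument is correct and follows the same strategy as the paper: both build the isomorphism $\Psi=\psi_1\psi_2\cdots\psi_{d-1}$ from Lemma~\ref{lem:eigenvalue_and_s} and use that it intertwines $y_d$ on $M_{\mathbf{i}}$ with $y_1$ on $M_{\mathbf{i}'}$. You have spelled out the inductive bookkeeping (the pair at positions $(k,k+1)$ is always $(\mathbf{i}_k,\mathbf{i}_d)$, so both the invertibility from Lemma~\ref{lem:eigenvalue_and_s} and the vanishing of $e_k$ from Lemma~\ref{lem:eigenvalue_and_e} apply) more carefully than the paper, and your intertwining direction $\Psi y_d = y_1\Psi$ is the one actually needed for the stated equivalence; the paper's displayed formula $\psi(y_1 m)=y_d\psi(m)$ appears to be a typo.
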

\begin{proof}
By Lemma \ref{lem:eigenvalue_and_s} the element $\psi:=\psi_1 \cdots \psi_{d-1}$ as an isomorphism between $M_{\mathbf{i}}$ and $M_{\mathbf{i}'}$ intertwining the actions of $y_1$ and $y_d$. i.e. $\psi(y_1m)=y_d\psi(m)$ for any 
$m\in M_{\mathbf{i}}$.
\end{proof}

\begin{corollary} \label{cor:eigenvalue_and_s}
Assume that $\mathbf{i}_k \neq \mathbf{i}_{k+1}$, then
$$s_k M_{\mathbf{i}} \subset \left\lbrace 
\begin{array}{ll}
M_{s_k \mathbf{i}} \oplus M_{\mathbf{i}} & \text{if } \mathbf{i}_k + \mathbf{i}_{k+1} \neq 0, \\
\bigoplus_{\mathbf{i}' \in {\rm I}} M_{\mathbf{i}'} & \text{if } \mathbf{i}_k + \mathbf{i}_{k+1} = 0.
\end{array} \right.$$
\end{corollary}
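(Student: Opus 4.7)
The plan is to deduce this corollary directly from Lemma~\ref{lem:eigenvalue_and_s} by solving the defining equation $\psi_k = s_k(y_k - y_{k+1}) + 1$ for $s_k$ on the generalized eigenspace $M_\mathbf{i}$. The assumption $\mathbf{i}_k \neq \mathbf{i}_{k+1}$ is exactly what is needed to make this inversion meaningful.

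First I would observe that, since $y_k$ and $y_{k+1}$ preserve $M_\mathbf{i}$ (they commute with each other by (VW.\ref{5c})), the operator $y_k - y_{k+1}$ restricts to an endomorphism of $M_\mathbf{i}$. On $M_\mathbf{i}$ the element $(y_k - y_{k+1}) - (\mathbf{i}_k - \mathbf{i}_{k+1})$ is nilpotent, and $\mathbf{i}_k - \mathbf{i}_{k+1} \neq 0$ by hypothesis, so $y_k - y_{k+1}$ acts invertibly on $M_\mathbf{i}$. In particular $(y_k - y_{k+1})^{-1} M_\mathbf{i} \subset M_\mathbf{i}$, and from $\psi_k - 1 = s_k(y_k - y_{k+1})$ one obtains
\begin{equation*}
s_k M_\mathbf{i} \;=\; (\psi_k - 1)(y_k - y_{k+1})^{-1} M_\mathbf{i} \;\subset\; \psi_k M_\mathbf{i} \;+\; M_\mathbf{i}.
\end{equation*}

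Next I would plug in the two cases of Lemma~\ref{lem:eigenvalue_and_s}. If $\mathbf{i}_k + \mathbf{i}_{k+1} \neq 0$, then $\psi_k M_\mathbf{i} \subset M_{s_k \mathbf{i}}$, so $s_k M_\mathbf{i} \subset M_{s_k \mathbf{i}} + M_\mathbf{i}$; this is a direct sum because $s_k \mathbf{i} \neq \mathbf{i}$, using again $\mathbf{i}_k \neq \mathbf{i}_{k+1}$. If instead $\mathbf{i}_k + \mathbf{i}_{k+1} = 0$, Lemma~\ref{lem:eigenvalue_and_s} gives $\psi_k M_\mathbf{i} \subset \bigoplus_{\mathbf{i}' \in {\rm I}} M_{\mathbf{i}'}$, and since $\mathbf{i}$ itself belongs to the set ${\rm I}$ (it agrees with itself away from positions $k,k+1$ and already satisfies $\mathbf{i}_k + \mathbf{i}_{k+1} = 0$), the extra summand $M_\mathbf{i}$ is absorbed, yielding $s_k M_\mathbf{i} \subset \bigoplus_{\mathbf{i}' \in {\rm I}} M_{\mathbf{i}'}$.

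There is no real obstacle here; the only delicate point is to make sure that one does not accidentally need the stronger hypothesis $|\mathbf{i}_k - \mathbf{i}_{k+1}| \neq 1$ from Lemma~\ref{lem:eigenvalue_and_s}. That extra assumption was used there only to invert $\psi_k$ itself, which is not needed in the present argument: only the inclusion of images is used, together with invertibility of $y_k - y_{k+1}$, for which $\mathbf{i}_k \neq \mathbf{i}_{k+1}$ suffices.
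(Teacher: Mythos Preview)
Your argument is correct and is exactly the approach the paper takes: the paper's proof simply observes that $y_k-y_{k+1}$ is an automorphism of $M_{\mathbf i}$ under the hypothesis $\mathbf i_k\neq\mathbf i_{k+1}$ and then invokes Lemma~\ref{lem:eigenvalue_and_s}. Your write-up just unpacks that one-line reduction in more detail, including the observation that $\mathbf i\in{\rm I}$ in the second case.
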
 
\begin{proof}
Under the assumption that $\mathbf{i}_k \neq \mathbf{i}_{k+1}$ we have that $y_k-y_{k+1}$ is an automorphism of $M_\mathbf{i}$ and the statement follows then directly from Lemma \ref{lem:eigenvalue_and_s}.
\end{proof}

\section{Cyclotomic quotients and category $\mathcal{O}$}
\label{sec:cycl_and_cat_O}
Fix $\delta \in \mZ$. Let $\mathfrak{g}=\mathfrak{so}(2n)$ be the complex special orthogonal Lie algebra corresponding to the Dynkin diagram $\Gamma$ of type ${\rm D}_n$ and fix a triangular decomposition $\mathfrak{g}= \mathfrak{n}^- \oplus \mathfrak{h} \oplus \mathfrak{n}^+$. Fix $\mathfrak{l}$, a Levi subalgebra obtained from an embedding of the type ${\rm A}_{n-1}$ Dynkin diagram into $\Gamma$ and denote by $\mathfrak{p} = \mathfrak{l} \oplus \mathfrak{n}^+$ the corresponding parabolic subalgebra. Denote by $\varepsilon_1,\ldots,\varepsilon_n$ the standard basis of $\mathfrak{h}^*$.

By $\mathcal{O}^\mathfrak{p}(n) = \mathcal{O}^\mathfrak{p}_{\rm int}(\mathfrak{so}(2n))$ we denote the integral parabolic BGG category $\mathcal{O}$, i.e., the full subcategory of $\mathcal{U}(\mathfrak{g})$-modules consisting of finitely generated $\mathcal{U}(\mathfrak{g})$-modules, semisimple over $\mathfrak{h}$ with integral weights, and locally finite for $\mathfrak{p}$, see \cite[Chapter 9]{Humphreys}. Let
\begin{equation}
X_n^\mathfrak{p} = \{\la\in\mh^*\text{ integral}\mid\la+\rho=\sum_{i=1}^n\la_i\epsilon_i \text{ where } \la_1< \la_2<\cdots< \lambda_n\},
\label{eqn:DefLa}
\end{equation}

where $\rho$ denotes the half-sum of the positive roots, $\rho=\sum_{i=1}^n(i-1)\epsilon_i$. Then $X_n^\mathfrak{p}$ is precisely the set of highest weights of simple objects in $\mathcal{O}^\mathfrak{p}(n)$, see \cite{Humphreys}. For an element $\lambda \in X_n^\mathfrak{p}$ we denote by $+M^\pp(\la)$ the parabolic Verma module with highest weight $\lambda$. Note that a weight $\la=(\la_1,\la_2,\cdots,\la_n)$ written in the $\epsilon$-basis is integral, if either $\la_i\in\mathbb{Z}$, i.e. $2\la_i$ is even for all $i$, or $\la_i\in\frac{1}{2}+\mathbb{\mZ}$, i.e. $2\la_i$ is odd for all $i$. 

As in  \cite{ES3}, a crucial player in the following will be the parabolic Verma module $M^\mathfrak{p}(\underline{\delta})$ of highest weight
\begin{eqnarray} \label{eqn:delta}
\underline{\delta}& =&\frac{\delta}{2}(\varepsilon_1 + \ldots + \varepsilon_n),
\end{eqnarray}
i.e., a multiple of the fundamental weight $\frac{1}{2}(\varepsilon_1 + \ldots + \varepsilon_n)$. With an appropriate choice of parameters $\Xi_\delta$, there is , see \cite{ES3}, a natural (right) action of $\VWd(\Xi_\delta)$ on $\Mde \otimes V^{\otimes d}$ by $\mathfrak{g}$-endomorphisms. Hence we have an algebra homomorphism $\VWd(\Xi_\delta)\rightarrow \END_\mg(\MdV)^{\op{opp}}$.  The parameter set $\Xi_\delta = (\omega_a)_{a \geq 0}$ appear as part of the following definition':

\begin{definition}
\label{defalphabeta}
For $N = 2n$, we define the \emph{cyclotomic parameters} as follows
\begin{eqnarray}
&\omega_0=N, \quad \omega_1=N\frac{N-1}{2}, \quad \omega_a=(\alpha+\beta)\omega_{a-1}-\alpha\beta \omega_{a-2} \quad \text{for } a \geq 2,&\label{NN}\\
&\text{where we set}&\nonumber\\
&\alpha=\frac{1}{2}(1-\delta),\quad \quad\quad \beta=\frac{1}{2}(\delta+N-1).\label{ab}&
\end{eqnarray}
\end{definition}

Then the following important result holds:

\begin{theorem}[\cite{ES3}]
\label{iso}
If $n \geq 2d$, then the $\VWd(\Xi_\delta)$-action from above  induces an isomorphism of algebras
\begin{eqnarray}
\label{Psi}
\Psi(\de):\quad\VWd(\Xi_\delta;\alpha,\beta)&\longrightarrow&\END_\mg(\MdV)^{\op{opp}}.
\end{eqnarray}
\end{theorem}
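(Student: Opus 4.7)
The plan is to break the proof into three stages: first define a well-defined action of $\VWd(\Xi_\delta)$ on $\MdV$, then verify that it factors through the cyclotomic quotient with parameters $\alpha,\beta$, and finally establish that the resulting map $\Psi(\de)$ is an isomorphism by showing surjectivity and matching dimensions.

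\textbf{Construction of the action.} I would let $s_i$ act by the usual flip on the $i$-th and $(i+1)$-st tensor factors of $V^{\otimes d}$, and $e_i$ act as (the adjoint of) the evaluation map $V\otimes V\to\mC$ composed with its adjoint on the same pair of tensor slots, both extended by the identity on $\Mde$ and the remaining tensorands. These act as $\mg$-endomorphisms by standard Brauer/invariant-theory arguments. For $y_1$ I would take a Casimir-type element acting on the first copy of $V$ together with $\Mde$, concretely one-half times the difference between the Casimir acting on $\Mde\otimes V$ (first two slots) and the Casimirs of its summands; the remaining $y_i$ are then forced by (VW.\ref{7}) via $y_{i+1}=s_iy_is_i+s_i-e_is_i$, which simultaneously gives commutation with $s_j,e_j$ for $j>i$. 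Checking the VW-relations is then a sequence of direct computations on pairs of tensor slots, with (VW.\ref{4}) reducing to a single computation in $\Mde\otimes V$.

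\textbf{The cyclotomic quotient.} The key structural input is that $\Mde\otimes V$ decomposes as a direct sum of (at most) two indecomposable summands corresponding to moving the highest weight $\underline\delta$ up by $\tfrac12\varepsilon_j$-type shifts; equivalently, the parabolic translation functor $-\otimes V$ projects onto exactly two blocks of $\cO^\pp(n)$. Computing the Casimir eigenvalue on each summand yields precisely the scalars $\alpha=\tfrac12(1-\delta)$ and $\beta=\tfrac12(\delta+N-1)$, so $y_1$ satisfies $(y_1-\alpha)(y_1-\beta)=0$ as endomorphisms, proving the action factors through $\VWd(\Xi_\delta;\alpha,\beta)$. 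The admissibility parameters $\omega_a$ of Definition \ref{defalphabeta} are then forced by the recursion coming from (VW.\ref{4}) combined with $(y_1-\alpha)(y_1-\beta)=0$, and the initial values $\omega_0=N$, $\omega_1=N(N-1)/2$ are verified by direct calculation of $\dim V$ and of $\epsilon_1 y_1 \epsilon_1$ on $V\otimes V$.

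\textbf{Isomorphism.} For surjectivity I would argue via translation functors on parabolic category $\cO$: the functor $-\otimes V$ is a biadjoint functor between blocks and its natural transformations are generated by the Casimir $y_1$ together with the Brauer-type endomorphisms $s_i,e_i$ of $V^{\otimes d}$. This is essentially an endomorphism-of-translated-projectives computation in $\cO^\pp(n)$ that shows every $\mg$-endomorphism of $\MdV$ lies in the image of $\Psi(\de)$. For injectivity I would compare dimensions: by the basis theorem of Ariki--Mathas--Rui \cite{AMR}, the level-two cyclotomic quotient $\VWd(\Xi_\delta;\alpha,\beta)$ has dimension $2^d(2d-1)!!$, and the same number is obtained for $\dim\END_\mg(\MdV)$ from the stable multiplicity formulas for parabolic Verma modules in the stable range $n\geq 2d$ (the indexing set $\La_d$ of cell modules for ${\rm Br}_d$ matches the set of summands of $\MdV$ via the combinatorics of up-down bitableaux). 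The main obstacle is the surjectivity step, since it requires a careful analysis of natural transformations between translation functors in $\cO^\pp(n)$ of type $\rm D$; once that is in place, the dimension count closes the argument.
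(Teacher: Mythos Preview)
The paper does not prove this theorem at all: it is quoted verbatim from \cite{ES3} (note the attribution in the theorem header) and immediately followed by ``In the following we abbreviate\ldots'' with no argument given. So there is no proof in the paper to compare against; your sketch is an attempt to reconstruct the argument of \cite{ES3}, not of the present paper.

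That said, your outline is broadly the right shape for how such a result is established. The action of $s_i,e_i$ as flip and evaluation/coevaluation, and of $y_1$ via the split Casimir $\Omega$ on $\Mde\otimes V$, is standard; the verification that $(y_1-\alpha)(y_1-\beta)=0$ via the two-block decomposition of $\Mde\otimes V$ and Casimir eigenvalues is exactly the mechanism, and the recursion for the $\omega_a$ is forced as you say. The dimension of the level-two cyclotomic quotient is indeed $2^d(2d-1)!!$ by \cite{AMR}. Two points where your sketch is thin: first, the surjectivity step is the genuine content and cannot be dispatched by the phrase ``essentially an endomorphism-of-translated-projectives computation''---one needs either an explicit inductive argument on $d$ using the Verma flag of Lemma~\ref{flags} together with projectivity of $\Mde$ (which requires the dominance of $\underline{\delta}$, hence the hypothesis on $n$), or a direct identification with a basic algebra of a block. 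Second, the dimension of $\END_\mg(\MdV)$ is computed in \cite{ES3} by counting up-down bitableaux (the set $\mathcal{T}_d$ of the present paper), and matching this with $2^d(2d-1)!!$ is a separate combinatorial step you have not supplied. These are real gaps if you intend this as a self-contained proof rather than a pointer to \cite{ES3}.
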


In the following we abbreviate $\VWdcycl=\VWd(\Xi_\delta;\alpha,\beta)$. \\

Note that via \eqref{Psi}, the space $\MdV$ becomes a module for $\VWd(\Xi_\delta;\alpha,\beta)$ with the action preserving the finite dimensional $\mg$-weight spaces, hence we have a simultaneous generalized eigenspace decomposition \eqref{eigspaces}. We describe now this decomposition Lie theoretically and then combinatorially using the notion of a up-down bitableaux and bipartitions. We start with the following well-known fact:

\begin{lemma}
\label{flags}
Let $\mu \in \La$. Then $M^\mathfrak{p}(\mu)\otimes V$ has a filtration (called Verma flag) 
with sections isomorphic to precisely the 
$M^\mathfrak{p}(\mu\pm\epsilon_j)$ for all
$j=1,\dots n$ such that
$\mu\pm\epsilon_j \in \La$. The sections are pairwise not isomorphic.  
\end{lemma}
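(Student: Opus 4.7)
The plan is to apply the standard tensor identity for parabolic Verma modules and then decompose the resulting tensor product of $\ml$-modules by the Pieri rule for $\GL_n$. First, I would write $M^\pp(\mu) \cong U(\mg) \otimes_{U(\pp)} E(\mu)$, where $E(\mu)$ denotes the finite dimensional simple $\ml$-module of highest weight $\mu$, extended to a $\pp$-module by letting the nilradical act trivially. Using that $U(\mg)$ is free as a right $U(\pp)$-module, the tensor identity yields
$$M^\pp(\mu) \otimes V \;\cong\; U(\mg) \otimes_{U(\pp)} \bigl( E(\mu) \otimes V \bigr),$$
where $V$ is viewed as a $\pp$-module by restriction.

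Next I would study $V$ as a $\pp$-module. Its weights are $\pm\epsilon_1,\ldots,\pm\epsilon_n$, while the nilradical of $\pp$ consists of root vectors for the positive roots $\epsilon_i+\epsilon_j$ with $i<j$. Hence the span $V^+$ of weight vectors of weights $+\epsilon_j$ is a $\pp$-submodule on which the nilradical acts as zero, and the quotient $V^- := V/V^+$ has $\ml$-weights $-\epsilon_j$. Viewed as $\ml = \GL_n$-modules, $V^+$ is the natural representation and $V^-$ its dual. Tensoring the short exact sequence $0\to V^+ \to V \to V^- \to 0$ with $E(\mu)$ produces a two-step $\pp$-filtration of $E(\mu)\otimes V$ whose sections are $E(\mu)\otimes V^+$ and $E(\mu)\otimes V^-$. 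By the Pieri rule for $\GL_n$ these decompose as $\ml$-modules into
$$E(\mu)\otimes V^+ \;\cong\; \bigoplus_j E(\mu+\epsilon_j), \qquad E(\mu)\otimes V^- \;\cong\; \bigoplus_j E(\mu-\epsilon_j),$$
the sums ranging over those $j$ for which the target weight is $\ml$-dominant, equivalently, for which $\mu\pm\epsilon_j \in \La$.

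Then I would lift these $\ml$-decompositions to genuine $\pp$-decompositions: on $E(\mu)\otimes V^+$ the nilradical acts as zero because it already vanishes on $V^+$, so every $\ml$-summand is a $\pp$-summand; on the quotient $E(\mu)\otimes V^-$ the same argument applies to the induced action. Applying the exact functor $U(\mg)\otimes_{U(\pp)} (-)$, which sends $E(\lambda)$ to $M^\pp(\lambda)$, to this refined filtration produces the desired Verma flag of $M^\pp(\mu)\otimes V$ with sections isomorphic to the $M^\pp(\mu\pm\epsilon_j)$. Pairwise non-isomorphism is then immediate, since parabolic Verma modules are determined up to isomorphism by their highest weights and the weights $\mu\pm\epsilon_j$ appearing here are pairwise distinct.

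The main technical point is to verify that the Pieri selection rule (dominance of $\mu\pm\epsilon_j$ for $\ml$) matches the condition $\mu\pm\epsilon_j \in \La$ in the statement. Both reduce to strict monotonicity of the coordinates of $\mu+\rho\pm\epsilon_j$, so they agree, but some care is needed at the boundary indices $j=1$ and $j=n$ and when two consecutive coordinates of $\mu+\rho$ differ by exactly one. Once this combinatorial matching is in place, the rest of the argument is purely formal, relying only on exactness of the induction functor and on the well-known equality $U(\mg)\otimes_{U(\pp)} E(\lambda) = M^\pp(\lambda)$.
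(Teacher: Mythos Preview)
Your argument is correct and is essentially the same approach the paper takes: both rely on the tensor identity for parabolic Verma modules together with the observation that the weights of $V$ are exactly $\pm\epsilon_j$. The paper simply cites this as a standard fact (referring to \cite[Theorem~3.6]{Humphreys}), whereas you unpack the mechanism explicitly via the $\pp$-filtration $V^+\subset V$ and the Pieri rule for $\GL_n$; your version is more detailed but not genuinely different in strategy.
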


\begin{proof}
This is a standard consequence of the tensor identity;
see e.g. \cite[Theorem 3.6]{Humphreys}, noting that $V$ has precisely the weights $\pm\epsilon_j$ for $1\leq j\leq n$.
\end{proof}

In particular, $M^\mathfrak{p}(\underline{\delta})\otimes V^{\otimes{d}}$ has a Verma flag with sections isomorphic to precisely the 
$M^\mathfrak{p}(\la)$, where  $\la-\de=\sum_{j=i}^d a_i$, where $a_i\in\{\pm\epsilon_j\mid1\leq j\leq n\}$.  Given such a weight $\la$ we can write $\la-\de=\sum_{i=1}^n m_i\epsilon_i$ with $m_i\leq 0$ for $1\leq i\leq s$ and $m_i>0$ for $i>s$ for some (uniquely defined) $s$.  Then we assign to $\la$ the bipartition  $\varphi(\la)=(\la^{(1)},\la^{(2)})$ with $\la^{(1)}_i=|m_i|$ and $\la^{(2)}_i=m_{s+i}$. This will be seen as a pair of {\it Young diagrams}, which are both consisting of arrangements of boxes with left-justified rows and number of boxes per row weakly decreasing from top to bottom. Each box $b$ in such a pair of Young diagrams has a content $c(b)$, let $b$ be in the $r$-th row of its diagram and in the $c$-th column (counting from top to bottom and from left to right, starting with $1$ in both cases), then $c(b) = r-c$ if $b$ is in $\la^{(1)}$ and $c(b) = c-r$ if $b$ is in $\la^{(2)}$. Below, the pair of Young diagrams attached to the bipartition $(3,2,1,1), (2,2,1)$ is shown with the contents of each box written in the box
\newcommand{\minusone}{$-1$}
\newcommand{\minustwo}{$-2$}
\begin{eqnarray}
\label{bipartition}
\begin{minipage}{6cm}
$\left(\la^{(1)}={\young(0\minusone\minustwo,10,2,3)}, \la^{(2)}=\young(01,\minusone0,\minustwo)\right)$
\end{minipage}
\end{eqnarray}

It will be convenient to draw such a pair of Young diagrams $\varphi(\la)$ as a {\it double Young diagram} with a total of $d$ boxes in the following way.

Consider an infinite strip with $n$ columns and a horizontal line $o$. This horizontal line will split the strip into two regions, an upper and a lower part. A double Young diagram consists of two Young diagrams, one placed in the upper half with center of gravity on the lower right point of that region and a second one places in the lower part with center of gravity on the upper left  such that no column contains boxes above and below the line $o$. 

The double Young diagram attached to the weight $\la$ is constructed as follows: the Young diagram at the bottom is just the the Young diagram for $\la^{(1)}$ transposed; the Young diagram at the top  is obtained from the Young diagram for $\la^{(2)}$ by transposing the diagram and then rotating it by 180 degrees. Denote the result $((\la^{(1)})^t,{}^t(\la^{(2)}))$. The contents for the boxes are transposed respectively rotated accordingly. Below, the double Young diagram attached to the bipartition $(3,2,1,1), (2,2,1)$ is displayed:

\begin{eqnarray}
\label{displaybipartition}
&
\begin{minipage}{6cm}
$\left(\la^{(1)}={\young(0\minusone\minustwo,10,2,3)}, \la^{(2)}=\young(01,\minusone0,\minustwo)\right)$
\end{minipage}
\begin{minipage}{6cm}
\begin{tikzpicture}[scale=1, thick]
\draw[thin,red] (-.25,0)  -- +(5.5,0);

\node at (.25,1.5) {\tiny $1$};
\node at (.75,1.5) {\tiny $2$};
\node at (1.25,1.5) {\tiny $3$};
\node at (1.75,1.5) {\tiny $4$};
\node at (2.75,1.5) {\tiny $\cdots$};
\node at (3.75,1.5) {\tiny n-2};
\node at (4.25,1.5) {\tiny n-1};
\node at (4.75,1.5) {\tiny n};

\draw[dotted] (0,1.75) -- +(0,-4);
\draw[dotted] (0.5,1.75) -- +(0,-4);
\draw[dotted] (1,1.75) -- +(0,-4);
\draw[dotted] (1.5,1.75) -- +(0,-4);
\draw[dotted] (2,1.75) -- +(0,-4);
\draw[dotted] (3.5,1.75) -- +(0,-4);
\draw[dotted] (4,1.75) -- +(0,-4);
\draw[dotted] (4.5,1.75) -- +(0,-4);
\draw[dotted] (5,1.75) -- +(0,-4);

\draw rectangle (.5,-.5);
\draw (.5,0) rectangle +(.5,-.5);
\draw (0,-.5) rectangle +(.5,-.5);
\fill[fill=lightgray, draw=black] (0,-1) rectangle +(.5,-.5);
\fill[fill=lightgray, draw=black] (.5,-.5) rectangle +(.5,-.5);
\fill[fill=lightgray, draw=black] (1,0) rectangle +(.5,-.5);
\fill[fill=lightgray, draw=black] (1.5,0) rectangle +(.5,-.5);

\node at (.25,-.25) {$0$};
\node at (.75,-.25) {$1$};
\node at (1.25,-.25) {$2$};
\node at (1.75,-.25) {$3$};
\node at (.25,-.75) {$-1$};
\node at (.75,-.75) {$0$};
\node at (.25,-1.25) {$-2$};

\draw (2,-.025) rectangle +(.5,.05);
\draw (2.5,-.025) rectangle +(.5,.05);
\draw (3,-.025) rectangle +(.5,.05);

\fill[fill=lightgray, draw=black] (3.5,0) rectangle +(.5,.5);
\draw (4,0) rectangle +(.5,.5);
\fill[fill=lightgray, draw=black] (4,.5) rectangle +(.5,.5);
\draw (4.5,0) rectangle +(.5,.5);
\fill[fill=lightgray, draw=black] (4.5,.5) rectangle +(.5,.5);

\node at (3.75,.25) {$-2$};
\node at (4.25,.25) {$-1$};
\node at (4.75,.25) {$0$};
\node at (4.25,.75) {$0$};
\node at (4.75,.75) {$1$};

\\
\end{tikzpicture}
\end{minipage}
&\nonumber\\
\end{eqnarray}

In addition we will modify the {\bf content} of a box in such a double Young diagram as follows. If a box $b$ is inside the lower part of the diagram, then $c_\delta(b)=c(b) + \alpha$, while the content of a box $b$ in the upper part of the diagram is defined to be $c_\delta(b)=c(b) + \beta$.

To read off the weight $\la$ from the corresponding double Young diagram we consider the boundary boxes responsible for the shape (i.e. the boxes shaded in the example  \eqref{displaybipartition}). Let $b_i$ denote the number of boxes in column $i$, multiplied with $-1$ if the boxes are in the lower half. Then 
\begin{eqnarray*}
\lambda &=& \underline{\delta} + \sum_{i=1}^n b_i \varepsilon_i.
\end{eqnarray*}
More generally, the {\it weight}, ${\rm wt}(Y)$, of a double diagram $Y$ is defined as 
\begin{eqnarray*}
{\rm wt}(Y) &=& \sum_{i=1}^n b_i \varepsilon_i.
\end{eqnarray*}
An {\it up-down bitableaux} of length $d$ is a sequence  $\mathbf{Y}=(\mathbf{Y}_1, \mathbf{Y}_2,\ldots, \mathbf{Y}_d)$ of double Young diagrams such that $\mathbf{Y}_1$ corresponds to the trivial bipartition $(\emptyset,\emptyset)$ and two consecutive double diagrams differ just by one box (added or removed). Let ${\rm wt}(\mathbf{Y})=({\rm wt}(\mathbf{Y}))_{1\leq i\leq d}$  be the weight sequence attached to $\mathbf{Y}$. The set of all such up-down bitableaux of length $d$ is denoted by $\mathcal{T}_d$.\\

The module  $M:=\Mde \otimes V^{\otimes d}$ decomposes into a direct sum of submodules 
\begin{eqnarray}
\label{stupid}
 M&=&\bigoplus_{\mathbf{a} \in \mathbb{C}^d/S_d} M_{S_d\mathbf{a}},
\end{eqnarray}
where $a$ runs through a fixed set of representatives for the $S_d$-orbits on  $\mathbb{C}^d$ and $M_{S_d\mathbf{a}}=\oplus_{w\in S_d} M_{w(\mathbf{a})}$, with the summands defined as in \eqref{eigspaces}. These are then the subspaces of $M$ where the multiset of occurring generalized eigenvalues of all the individual $y_i$'s is fixed.

\begin{prop}
\label{surjectivity}
Assume $n \geq 2d$. There is a canonical bijection between $\mathcal{T}_d$ and the parabolic Verma modules appearing as sections in a Verma filtration of $\Mde \otimes V^{\otimes d}$ counted with multiplicities such that the following holds 
\begin{enumerate}[1.)]
\item The bijection is given by assigning to a up-down bitableau $\mathbf{Y}=(\mathbf{Y}_1, \mathbf{Y}_2,\ldots, \mathbf{Y}_d)$ the parabolic Verma module $M(\underline{\delta} + {\rm wt}(\mathbf{Y}_d))$.
\item For $1 \leq j \leq d$, let $\eta_j=1$ if $\mathbf{Y}_j$ was obtained from $\mathbf{Y}_{j-1}$ by adding a box $B_j$ and $\eta_j=-1$ if $\mathbf{Y}_j$ was obtained from $\mathbf{Y}_{j-1}$ by removing a box $B_j$. Then the parabolic Verma module $M^\mathfrak{p}(\underline{\delta} + {\rm wt}(\mathbf{Y}_d))$ associated to $\mathbf{Y}$ appears as a subquotient of the summand in \eqref{stupid} containing the generalized eigenspace of the operator $y_k$ for eigenvalue $\eta_k c_\delta(B_k)$.
\end{enumerate}
\end{prop}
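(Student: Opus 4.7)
\emph{Plan of proof.} I proceed by induction on $d$. For $d=0$ the statement is vacuous: the unique element of $\mathcal{T}_0$ is the empty sequence, which labels the single section $\Mde$ of $\Mde\otimes V^{\otimes 0}$, and no $y_k$-eigenvalue condition is imposed.

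For the inductive step, assume the statement for length $d-1$. Tensoring with one additional copy of $V$ and applying Lemma~\ref{flags} to every section $M^\pp(\mu)$ in the Verma filtration of $\Mde\otimes V^{\otimes(d-1)}$, we obtain pairwise non-isomorphic new sections $M^\pp(\mu\pm\varepsilon_j)$, one for each $j\in\{1,\dots,n\}$ such that $\mu\pm\varepsilon_j\in X_n^\pp$. The first task is purely combinatorial: show that these admissible shifts, starting from $\mu=\underline{\delta}+\mathrm{wt}(\mathbf{Y}_{d-1})$, correspond bijectively to the single-box modifications of $\mathbf{Y}_{d-1}$ producing a valid double Young diagram $\mathbf{Y}_d$. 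This is immediate from the identity $\lambda=\underline{\delta}+\sum_i b_i\varepsilon_i$: the shift $\mu\mapsto\mu\pm\varepsilon_j$ changes the signed column height $b_j$ by $\pm 1$, while the strict dominance condition defining $X_n^\pp$ translates column-wise exactly into the left-justified shape condition on the double diagram, with the upper/lower convention dictating which sign of $\varepsilon_j$ adds versus removes a box. The sign $\eta_j$ is then read off as $+1$ if $B_j$ is added and $-1$ if $B_j$ is removed.

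For the eigenvalue claim, Theorem~\ref{iso} ensures that each $y_k$ acts on $\Mde\otimes V^{\otimes d}$ via a $\mg$-endomorphism; hence it preserves the Verma flag and, since $M^\pp(\mu')$ is generated by its one-dimensional highest weight line, acts as a scalar on the associated subquotient. The standard realisation of $y_k$ as a sum of partial tensor Casimir contributions acting in the $k$-th slot (see \cite{AMR}, \cite{ES3}) reduces the scalar computation to evaluating the tensor Casimir on the canonical projection $M^\pp(\mu)\otimes V\to M^\pp(\mu+\eta_k\varepsilon_j)$, a quantity that is affine-linear in $(\mu+\rho)_j$ by a standard weight calculation. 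Substituting $\mu=\underline{\delta}+\mathrm{wt}(\mathbf{Y}_{k-1})$ and tracking the contributions of $\underline{\delta}$ and $\rho$ column-by-column, this scalar collapses to precisely $\eta_k\,c_\delta(B_k)$: the shifts by $\alpha=\tfrac{1}{2}(1-\delta)$ (for the lower half) and $\beta=\tfrac{1}{2}(\delta+N-1)$ (for the upper half) in the definition of $c_\delta$ are exactly those forced by absorbing $\underline{\delta}$ and $\rho$.

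The main obstacle is the bookkeeping in this last step: one must verify that the $\alpha$- and $\beta$-shifts defining $c_\delta$ really match the ones forced by $\underline{\delta}$ and $\rho$ together with the upper/lower convention, and one must also confirm that the inductively produced section $M^\pp(\mu)$ of $\Mde\otimes V^{\otimes(k-1)}$ indeed lies in the correct summand of the decomposition \eqref{stupid} when re-read inside $\Mde\otimes V^{\otimes d}$. This latter compatibility follows from the commutativity of $y_1,\dots,y_{k-1}$ with $y_k$, which forces the generalised eigenspace decompositions for the first $k-1$ polynomial generators to be preserved under further tensoring with $V$.
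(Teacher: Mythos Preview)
Your proposal is correct and amounts to unpacking the reference to \cite{ES3} that constitutes the paper's entire proof: the inductive application of Lemma~\ref{flags} for the bijection together with the tensor-Casimir computation for the $y_k$-eigenvalues is exactly the argument carried out there. One small point of logic: the assertion that $y_k$ ``preserves the Verma flag'' does not follow merely from $y_k$ being a $\mathfrak{g}$-endomorphism (an arbitrary $\mathfrak{g}$-endomorphism need not stabilise a \emph{chosen} filtration by submodules), but rather from the explicit Casimir realisation you invoke in the next sentence, since $\Delta(C)$ acts through the centre of $U(\mathfrak{g})$ and hence stabilises every $U(\mathfrak{g})$-submodule while $C\otimes 1$ and $1\otimes C$ act by scalars on the relevant factors. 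As you do supply this mechanism, the argument goes through.
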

\begin{proof}
In case $\delta\geq 0$ this follows directly from \cite{ES3} and the bijection between up-down bitableaux and Verma path describing the Verma modules appearing in a Verma filtration and their eigenvalues. For $\delta<0$ the arguments are totally analogous. 
\end{proof}

\begin{definition}
We call an eigenvalue of $y_k$ \emph{small} if it corresponds to the content of a box in the lower half and we call it \emph{large} if it corresponds to the content of a box in the upper half. 
\end{definition}

The idempotent we define now projects onto the generalized common eigenspaces of the elements $y_k$ that are small.

\begin{definition}
For $1 \leq k \leq d$ we denote by $\eta_k \in \VWdcycl$ the idempotent projecting onto the generalized eigenspace of $y_k$ with eigenvalue different from $\beta$. Furthermore let $\mathbf{f}_k = \eta_1 \cdots \eta_k$ and $\mathbf{f} = \mathbf{f}_d$.
\end{definition}

It is clear from  the definition resp. the calculus of up-down sequences that if $y_k$ has a large eigenvalue on some composition factor, then there exists a $j \leq k$ such that $y_j$ has eigenvalue $\beta$ on that composition factor. Hence the element $\mathbf{f}$ projects onto the common generalized eigenspaces of small eigenvalues for the commutative subalgebra generated by the $y_j$'s.\\

The idempotent $\ff$ not central, but we have at least the following formulas:

\renewcommand{\theenumi}{\alph{enumi}}

\begin{prop}\label{prop:kill_f}
In $\VWdcycl$ the following equalities hold:
\begin{enumerate}[1.)]
\item For $1 \leq k \leq d-1$ 
$$\begin{array}{rccrc}
i) & c_{k} \ff s_k \ff = c_{k} s_k \ff, & \qquad & iii) & c_k \ff e_k \ff = c_k e_k \ff,\\
ii) &b_{k+1} \ff s_k \ff = b_{k+1} s_k \ff, & \qquad & iv) &b_{k+1} \ff e_k \ff = b_{k+1} e_k \ff.
\end{array}$$
\item For $1 \leq k < d-1$
$$\begin{array}{rccrc}
i) & e_k \ff s_{k+1} \ff = e_k s_{k+1} \ff, & \qquad & iv) & \ff s_k \ff s_{k+1} \ff = \ff s_k s_{k+1} \ff,\\
ii) & e_k \ff e_{k+1} \ff = e_k e_{k+1} \ff, & \qquad & v) & \ff s_k \ff e_{k+1} \ff = \ff s_k e_{k+1} \ff,\\
iii) & \ff e_k \ff s_{k+1} \ff e_{k} \ff = \ff e_k s_{k+1} e_k \ff.
\end{array}$$
\end{enumerate}
as well as all of these equalities with $k$ and $k+1$ swapped.
\end{prop}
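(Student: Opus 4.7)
The strategy is to verify each identity by restricting to the generalized weight-space decomposition of any module $M$ under the commuting subalgebra $\mathbb{C}[y_1,\ldots,y_d]$. Write $\ff M=\bigoplus_\mathbf{i} M_\mathbf{i}$, where $\mathbf{i}$ ranges over tuples with every $\mathbf{i}_j\neq\beta$ (the \emph{small} eigenvalues); since every listed equality ends on the right with $\ff$, it suffices to test both sides on such $M_\mathbf{i}$.

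For part~1.) I would rewrite each identity as the vanishing of a single expression, for instance $c_k(1-\ff)s_k\ff=0$ in the case of (1.i). Starting from $m\in M_\mathbf{i}$ with $\mathbf{i}$ small, Corollary~\ref{cor:eigenvalue_and_s} tells me that $s_k m$ lies in $M_\mathbf{i}\oplus M_{s_k\mathbf{i}}$ when $\mathbf{i}_k+\mathbf{i}_{k+1}\neq 0$, and in $\bigoplus_{\mathbf{i}'\in {\rm I}}M_{\mathbf{i}'}$ when $\mathbf{i}_k+\mathbf{i}_{k+1}=0$; the analog for $e_k$ is Lemma~\ref{lem:eigenvalue_and_e}. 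In the first case both $\mathbf{i}$ and $s_k\mathbf{i}$ remain small, so $(1-\ff)$ already annihilates the contribution. In the second case a large weight can arise only if position $k$ or $k+1$ acquires the value $\beta$, because the other entries are left untouched. The polynomials $c_k$ and $b_{k+1}$ in $y_k, y_{k+1}$, defined in \eqref{19} and \eqref{20}, are constructed precisely to annihilate the generalized $\beta$-eigenspaces of $y_k$ and $y_{k+1}$ respectively, which closes each of the four cases.

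For part~2.) each equation reduces, after expanding $\ff=1-(1-\ff)$ in the middle positions, to the vanishing of one or more expressions of the form $X(1-\ff)Y\ff$ with $X, Y\in\{s_a, e_a\}$ at adjacent indices; for (2.iii), which contains two interior idempotents, one applies this principle twice or reduces via the already established (2.v) and its swapped counterpart. By the same weight-space analysis, any large weight $\mathbf{i}'$ appearing in the image of $Y\ff$ must carry $\mathbf{i}'_a=\beta$ at some position $a$ moved by $Y$; combined with Lemma~\ref{lem:eigenvalue_and_e} (which forces $e_a M_{\mathbf{i}'}=0$ unless $\mathbf{i}'_a+\mathbf{i}'_{a+1}=0$, ruling out $\mathbf{i}'_a=\beta$ once one tracks the other entries) and the observation that $s_a$ only permutes positions $a,a+1$, one obtains the required vanishing. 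The variants with $k$ and $k+1$ swapped are proven by symmetric arguments.

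The main obstacle is the bookkeeping: for each of the nine identities one has to distinguish the configurations $\mathbf{i}_j+\mathbf{i}_{j+1}=0$ versus $\neq 0$ at the relevant indices, identify precisely which large-weight intermediate components can occur after the non-trivial step, and verify that $c_k$ and $b_{k+1}$ (or the appropriate Lemma \ref{lem:eigenvalue_and_e}-type vanishing) actually kill them. The defining relations (VW.\ref{7}), (VW.\ref{8a}), (VW.\ref{8b}) of Definition~\ref{def:VW} are invoked repeatedly to rewrite intermediate expressions, and the enumeration of the small/large dichotomy in Proposition~\ref{surjectivity} keeps the analysis finite.
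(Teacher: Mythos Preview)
Your overall strategy coincides with the paper's: both reduce each identity to the vanishing of an expression such as $c_k(1-\ff)s_k\ff$ and then analyze it on generalized weight spaces via Lemma~\ref{lem:eigenvalue_and_e} and Corollary~\ref{cor:eigenvalue_and_s}. There is, however, a genuine gap. You assert that $c_k=\beta-y_k$ and $b_{k+1}=\beta+y_{k+1}$ ``annihilate the generalized $\beta$-eigenspaces'' of $y_k$ and $y_{k+1}$; this is false in general, since a linear polynomial in $y_k$ kills only the \emph{honest} eigenspace, not the generalized one. The paper meets this point head-on: after observing that the image of $(1-\ff)s_k\ff$ lies in the generalized $\beta$-eigenspace for $y_k$, it invokes the diagram calculus together with the fact that $y_1$ acts as a scalar (a consequence of the cyclotomic relation and the assumption $n\geq 2d$) to argue that the relevant vectors are in fact honest eigenvectors, so that $c_k$ genuinely kills them. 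Without this step your argument for (1.i) and (1.iii) does not close.

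For (1.ii) the paper does not follow your uniform weight-space scheme at all. Instead it gives a short algebraic computation: writing $b_{k+1}\ff s_k\ff = b_{k+1}\ff s_k\frac{c_{k+1}}{c_{k+1}}\ff$ and using Lemma~\ref{lem:commute_b} to commute $c_{k+1}$ past $s_k$, the expression is rewritten so that only the already established cases (i), (iii), (iv) are needed to drop the inner $\ff$. This sidesteps having to determine whether it is $y_k$ or $y_{k+1}$ that carries the eigenvalue $\beta$ (versus $-\beta$) on the offending subspace, and whether it is honest there---issues your sketch does not resolve. Your treatment of part~2.) is in the same spirit as the paper's and is essentially correct as an outline, but the same ``honest versus generalized'' caveat and the precise identification of which coordinate equals $\beta$ (supplied in the paper by the up-down-bitableaux combinatorics of Proposition~\ref{surjectivity}) must be made explicit.
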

\begin{proof}
We start with part \emph{1.)}. For case \emph{i)} we claim that
$$ c_{k} \ff s_k \ff = c_{k} s_k \ff - c_{k} (1 - \ff) s_k \ff = c_{k} s_k \ff.$$
We only have to justify the last equality. But this holds due to Lemma \ref{lem:eigenvalue_and_s}; the image of $(1-\ff)s_k \ff$ is either 0 or consists of generalized eigenvectors for $y_{k}$ with eigenvalue $\beta$. By Corollary \ref{cor:eigenvalue_and_s} and the fact that due to the diagram calculus and the assumptions on $n$, $y_1$ always acts by a scalar these are honest eigenvectors and thus $c_k$ acts by zero.
For case \emph{ii)} we calculate
\begin{eqnarray*}
b_{k+1}\ff s_k \ff &=& b_{k+1}\ff s_k \frac{c_{k+1}}{c_{k+1}}\ff \\
&\overset{(a)}{=}& b_{k+1}\ff (c_k s_k + e_k - 1)\frac{1}{c_{k+1}}\ff \\
&\overset{(b)}{=}& b_{k+1} c_k s_k \frac{1}{c_{k+1}}\ff + b_{k+1} e_k \frac{1}{c_{k+1}}\ff- \frac{b_{k+1}}{c_{k+1}}\ff)\\
&\overset{(c)}{=}& b_{k+1} (s_k c_{k+1} - e_k + 1) \frac{1}{c_{k+1}}\ff + b_{k+1} e_k \frac{1}{c_{k+1}}\ff- \frac{b_{k+1}}{c_{k+1}}\ff = b_{k+1} s_k \ff
\end{eqnarray*}
Here equalities $(a)$ and $(c)$ hold by Lemma \ref{lem:commute_b}, while equality $(b)$ is due to the other cases of this lemma.
For case \emph{iii)} we have
$$ c_k e_k \ff = c_k \ff_{k-1} e_k \ff = c_k \ff_k \ff_{k-1} e_k \ff + c_k (1-\ff_k)\ff_{k-1} e_k \ff = c_k \ff_k e_k \ff.$$
The final equality holds because the image of $(1-\ff_k)\ff_{k-1}$ consists of eigenvectors for $y_k$ with eigenvalue $\beta$, hence are annihilated by $c_k$. That the image consists of eigenvectors follows as in case \emph{i)}. Furthermore, due to Lemma \ref{lem:eigenvalue_and_e}, $\ff_k e_k = \ff_{k+1} e_k$ and the statement follows. Case \emph{iv)} is the same since $b_{k+1}e_k = c_k e_k$ by Relation~(VW.\ref{8b}). \\

For part \emph{2.)}, we note that all of these are more or less proven in the same way using Lemma \ref{lem:eigenvalue_and_e} and Corollary \ref{cor:eigenvalue_and_s}. We will argue for $e_k \ff s_{k+1} \ff = e_k s_{k+1} \ff$ and leave the others to the reader.
It holds
$$e_k s_{k+1} \ff = e_k \ff s_{k+1} \ff + e_k(1-\ff)s_{k+1}\ff.$$
If we now look at a generalized eigenspace $M_\textbf{i}$ in the image of $(1-\ff)s_{k+1}\ff$, we see that, due to the diagram combinatorics, this can only be non-zero if $\textbf{i}_{k+1} = \beta$ and $\textbf{i}_{k+2} = -\beta$ while all other eigenvalues are small. Applying $e_k$ to this eigenspace is zero, due to Lemma \ref{lem:eigenvalue_and_e} since $\textbf{i}_{k}$ is small and thus cannot be $-\beta$. Hence only the first summand survives which proofs the claim. Similar arguments have to be applied to the other cases.
\end{proof}

Relation~(VW.\ref{7}) from Definition \ref{def:VW} on the nose or multiplied with the idempotent $\ff$ from both sides yields the following equalities:

\begin{lemma} \label{lem:commute_b}
In $\VWdcycl$ the following equalities hold for $1\leq k\leq d-1$:
\begin{enumerate}[1.)]
\item We have
$b_{k+1}s_k = s_kb_k - e_k + 1,$ and $\ff s_k \frac{1}{b_k} \ff = \frac{1}{b_{k+1}} \ff s_k \ff - \frac{1}{b_{k+1}} \ff e_k \frac{1}{b_{k}}\ff + \frac{1}{b_k b_{k+1}} \ff,$
\item We have $s_k b_{k+1}= b_ks_k - e_k + 1,$ and $\frac{1}{b_k} \ff s_k \ff = \ff s_k \frac{1}{b_{k+1}}\ff - \frac{1}{b_{k}} \ff e_k \frac{1}{b_{k+1}}\ff + \frac{1}{b_k b_{k+1}} \ff$.
\item We have $c_{k+1}s_k = s_kc_k + e_k - 1,$ and $\ff s_k \frac{1}{c_k} \ff = \frac{1}{c_{k+1}} \ff s_k \ff + \frac{1}{c_{k+1}} \ff e_k \frac{1}{c_{k}}\ff - \frac{1}{c_k c_{k+1}} \ff,$
\item We have $s_k c_{k+1}= c_ks_k + e_k - 1,$ and $\frac{1}{c_k} \ff s_k \ff = \ff s_k \frac{1}{c_{k+1}}\ff + \frac{1}{c_{k}} \ff e_k \frac{1}{c_{k+1}}\ff - \frac{1}{c_k c_{k+1}} \ff$.
\end{enumerate}
\end{lemma}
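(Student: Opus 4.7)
The plan is to deduce both the bare and the $\ff$-sandwiched identities from Relation~(VW.\ref{7}), i.e.\ $s_k y_k - y_{k+1} s_k = e_k - 1$ together with its mirror $y_k s_k - s_k y_{k+1} = e_k - 1$. The key observation is that $b_k$ (respectively $c_k$) differs from $y_k$ (respectively $-y_k$) by a scalar depending only on $\alpha, \beta$ and independent of $k$.

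For the four unsandwiched identities I substitute the definitions of $b_k$ and $c_k$ into the appropriate form of Relation~(VW.\ref{7}): the common scalar shift contributes equally on both sides of each commutation and therefore cancels, leaving the claimed identity. The sign flip between $b_k$ and $c_k$ accounts for why the $e_k - 1$ correction appears with opposite signs in the $b$-formulas and the $c$-formulas.

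For the $\ff$-sandwiched formulas I proceed uniformly. Starting from $b_{k+1} s_k = s_k b_k - e_k + 1$, multiplying on the left by $b_{k+1}^{-1}$ and on the right by $b_k^{-1}$ yields, on the image of $\ff$,
\[ s_k \tfrac{1}{b_k} \;=\; \tfrac{1}{b_{k+1}} s_k \;-\; \tfrac{1}{b_{k+1}} e_k \tfrac{1}{b_k} \;+\; \tfrac{1}{b_k b_{k+1}}. \]
Sandwiching by $\ff$ and using that $b_k$ and $b_{k+1}$ lie in $\mathbb{C}[y_1,\ldots,y_d]$, hence commute with $\ff$, produces the second identity in part 1.). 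The second identities in parts 2.), 3.), and 4.) follow in identical fashion from their respective bare partners; one just has to divide on the correct sides (by $b_k^{-1}$ on the left and $b_{k+1}^{-1}$ on the right for 2.), and analogously for the $c$-formulas).

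The one step requiring care is the justification that $b_k^{-1}$ and $c_k^{-1}$ make sense as operators on $\ff M$. This rests on the defining property of $\ff$: it is the projection onto joint generalized eigenspaces of $y_1,\ldots,y_d$ whose eigenvalues all miss the single critical value that would make $b_k$ (respectively $c_k$) singular. On such eigenspaces $b_k$ and $c_k$ act invertibly, their inverses lie in the commutative subalgebra generated by $y_1,\ldots,y_d$ restricted to $\ff M$, and in particular still commute with $\ff$. With this in hand every manipulation above is purely formal and the lemma follows.
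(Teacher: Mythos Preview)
Your argument is correct and follows the same route as the paper, which records the lemma as an immediate consequence of Relation~(VW.\ref{7}) together with sandwiching by $\ff$. Your write-up is in fact more explicit than the paper's one-sentence justification, and you rightly flag the invertibility of $b_k$, $c_k$ on $\ff M$ as the only point needing care. One small remark: the cleanest order of operations is to first multiply $b_{k+1}s_k = s_k b_k - e_k + 1$ on the right by $\tfrac{1}{b_k}\ff$ and on the left by $\ff$, and only then apply $\tfrac{1}{b_{k+1}}\ff$ on the left; this keeps every inverse acting on something already in $\ff\VWdcycl\ff$ and avoids having to argue separately that $\tfrac{1}{b_{k+1}}s_k\ff$ is well-defined before the outer $\ff$ is in place.
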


Note moreover that by Lemma \ref{lem:eigenvalue_and_e}, Corollaries \ref{cor:permute_eigenvalues} and \ref{cor:eigenvalue_and_s}, the definition of $\ff$, and our diagrammatic calculus we have that the expressions
\begin{eqnarray} 
\frac{1}{b_k} s_k \ff \text{ and } \frac{1}{b_k} e_k \ff.
\end{eqnarray}
are well-defined. This will be used in many of the proofs to come.

\section{The isomorphism theorem}
\label{sec:invers-square-roots}
In the isomorphism of the main theorem will appear some square roots. We start by recalling their definition and the required setup from \cite{SS2}. The crucial result is the following fact from \cite[Section 4]{SS2}:

\begin{lemma}\label{prop:2}
Let \(f(x) \in \mathbb{C}[x]\). Assume \(B\) is a finite-dimensional algebra, and let
  \(x_0 \in B\). Suppose that \((x-a) \nmid f(x)\) for all \(a \in \mathbb{C}\) which are generalized eigenvalues for the action of \(x_0\) on the regular representation. Then \(f(x_0)\in B\) has a (unique) inverse and a (non-unique) square root.
\end{lemma}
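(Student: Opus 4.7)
The strategy is to work entirely inside the commutative subalgebra $\mathbb{C}[x_0] \subset B$. Since $B$ is finite-dimensional, $x_0$ satisfies a minimal polynomial $m(x) \in \mathbb{C}[x]$, and the roots of $m(x)$ are precisely the generalized eigenvalues of left multiplication by $x_0$ on the regular representation $B$. The hypothesis therefore translates cleanly to the statement $\gcd(f,m) = 1$ in $\mathbb{C}[x]$.

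For the invertibility claim, I would invoke B\'ezout: there exist polynomials $u(x), v(x) \in \mathbb{C}[x]$ with $u(x) f(x) + v(x) m(x) = 1$, and evaluating at $x_0$ gives $u(x_0) f(x_0) = 1$ in $B$. Uniqueness of an inverse in an associative unital algebra is automatic.

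For the square root, the plan is to factor $m(x) = \prod_i (x-a_i)^{n_i}$ and apply the Chinese Remainder Theorem to obtain an isomorphism
\[
\mathbb{C}[x_0] \;\cong\; \mathbb{C}[x]/(m(x)) \;\cong\; \prod_i \mathbb{C}[x]/\bigl((x-a_i)^{n_i}\bigr).
\]
In the $i$-th local factor the image of $f(x_0)$ has the form $f(a_i) + \nu_i$ with $\nu_i$ nilpotent and $f(a_i) \neq 0$ by hypothesis. Writing this as $f(a_i)\bigl(1 + f(a_i)^{-1}\nu_i\bigr)$, I would pick any complex square root $c_i$ of $f(a_i)$ and set
\[
r_i \;=\; c_i \sum_{k \geq 0} \binom{1/2}{k}\bigl(f(a_i)^{-1}\nu_i\bigr)^k,
\]
a finite sum since $\nu_i$ is nilpotent, satisfying $r_i^2 = f(x_0)$ in the $i$-th factor. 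Reassembling these factor-wise square roots through the CRT isomorphism produces the desired $r \in \mathbb{C}[x_0] \subset B$ with $r^2 = f(x_0)$, and the $2^k$ independent sign choices for the $c_i$ (where $k$ is the number of distinct roots of $m$) account for the non-uniqueness.

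The only conceptual point requiring care — and really the only potential obstacle — is the identification of generalized eigenvalues of $x_0$ on $B$ with roots of the minimal polynomial; once this is observed (via faithfulness of the regular representation as a $\mathbb{C}[x_0]$-module), the rest of the argument unfolds inside commutative Artinian quotients where the binomial identity for $(1+\nu)^{1/2}$ literally terminates and no analytic subtleties arise.
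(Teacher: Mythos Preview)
Your argument is correct and is the standard way to establish this fact. Note, however, that the paper does not actually prove this lemma: it is stated as a quotation from \cite[Section~4]{SS2}, so there is no in-paper proof to compare against. What you have written is almost certainly what the referenced argument does as well --- reduce to the commutative Artinian ring $\mathbb{C}[x_0]\cong\mathbb{C}[x]/(m(x))$, split into local factors via CRT, and take the truncated binomial series $(1+\nu)^{1/2}$ in each factor --- since this is the canonical route and there is really no alternative of comparable simplicity.

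One small remark: your identification of the generalized eigenvalues of $x_0$ on the regular representation with the roots of the minimal polynomial of $x_0$ in $B$ is correct, and your justification via faithfulness is the right one (since $m(x_0)=0$ in $B$ iff left multiplication by $m(x_0)$ kills $1$, iff it kills everything). You could shorten this slightly by just observing that the minimal polynomial of the operator $L_{x_0}$ on $B$ coincides with the minimal polynomial of $x_0$ as an element of $B$, and the roots of the minimal polynomial of any operator on a finite-dimensional space are exactly its eigenvalues.
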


Let as above \(x_0 \in B\) be an element of a finite-dimensional algebra, and let \(a \in \mathbb{C}\). If \(a\) is not a generalized eigenvalue of \(x_0\) then by Proposition~\ref{prop:2} we can write expressions like
\begin{equation}
\frac{1}{x_0-a}, \qquad \sqrt{x_0-a}, \qquad  \sqrt\frac{1}{x_0 -a}.\label{eq:52}
\end{equation}
The square root is not unique, but we  make one choice once and for all, so that for example $ \sqrt{x_0-a} \sqrt\frac{1}{x_0 -a} = 1 $. For more details we refer to \cite{SS2}.

Let $1\leq k\leq d$. We fix the following elements of $\VWdcycl$
\begin{equation}
\label{19}
b_k = \beta + y_k \qquad \text{and}\qquad c_k = \beta - y_k
\end{equation}
and set (for a choice of square root)
\begin{equation}
\label{20}
Q_k = \sqrt{\frac{b_{k+1}}{b_k}} \mathbf{f}.
\end{equation}
Note that ${\frac{b_{k+1}}{b_k}} \mathbf{f}$ is well-defined (in contrast to $\frac{b_{k+1}}{b_k}$) by definition of ${b_k}$ and $ \mathbf{f}$.  By the above arguments, also the square root makes sense.

\begin{definition}
For $1\leq k \leq d-1$ define
\begin{eqnarray}
\tilde{s}_k = - Q_k s_k Q_k + \frac{1}{b_k} \mathbf{f}&\text{and}&
\tilde{e}_k = Q_k e_k Q_k.
\end{eqnarray}
\end{definition}

We finally can state our main result:

\begin{theorem}[Isomorphism theorem]\label{thm:main}
The map 
$$\Phi_\delta :\quad {\rm Br}_d(\delta) \longrightarrow \mathbf{f} \VWdcycl \mathbf{f}.$$
given on the standard generators by 
\begin{eqnarray}
\label{difficult1}
t_k \quad\longmapsto\quad -Q_k s_k Q_k + \frac{1}{b_k}\ff,&\text{and}& g_k \quad\longmapsto\quad Q_k e_k Q_k.
\end{eqnarray}
for $1 \leq k \leq d-1$ defines an isomorphism of algebras.
\end{theorem}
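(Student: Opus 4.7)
The plan is to prove Theorem \ref{thm:main} in two main stages: first, verify that $\Phi_\delta$ is well-defined by checking that the images $\tilde{s}_k$ and $\tilde{e}_k$ satisfy each defining relation of ${\rm Br}_d(\delta)$; second, establish bijectivity. The intuition is that the correction terms involving $Q_k$ and $1/b_k$ are precisely designed so that the ``large eigenvalue'' contributions get subtracted off when conjugating by $Q_k$, and $\tilde{s}_k, \tilde{e}_k$ then behave on the small-eigenvalue part exactly like $s_k, e_k$ do in the unquotiented Brauer relations, up to correction terms that cancel in each Brauer relation.

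The first simplifying observation is that $Q_k = \sqrt{b_{k+1}/b_k}\,\ff$ is a power series in $y_k, y_{k+1}$ times $\ff$. Hence for $|k-j|>1$, the element $Q_k$ commutes with $s_j$, $e_j$, and $y_j$ by relations (VW.\ref{2c}), (VW.\ref{5a}), (VW.\ref{5b}), (VW.\ref{5c}), so the ``far commutation'' relations $t_i t_j = t_j t_i$, $g_i g_j = g_j g_i$, and $g_i t_j = t_j g_i$ for $|i-j|>1$ reduce immediately to their VW counterparts after pulling through the $Q$'s and using Proposition \ref{prop:kill_f}. The quadratic relations are the next step: for $\tilde{e}_k^2 = \delta \tilde{e}_k$ one uses $e_k y_k^r e_k = \omega_r e_k$ for small $r$ together with $e_k(y_k+y_{k+1}) = 0$ and the explicit form of $\omega_0 = N$, $\omega_1 = N(N-1)/2$, solving out $Q_k e_k Q_k \cdot Q_k e_k Q_k$ to get a scalar equal to $\delta$ times $\tilde{e}_k$; here $\alpha + \beta = N/2$ and $\alpha\beta$ enter via definition \ref{defalphabeta}. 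For $\tilde{s}_k^2 = 1$ one squares the definition, uses $s_k^2 = 1$ and Lemma \ref{lem:commute_b} to move $s_k$ past $Q_k, b_k^{-1}$, and collects terms; the cross terms produce $e_k$-contributions that telescope with the $\ff/b_k$ piece.

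The most delicate step will be the mixed and braid-type relations: the type-D braid relation $\tilde{s}_k \tilde{s}_{k+1} \tilde{s}_k = \tilde{s}_{k+1} \tilde{s}_k \tilde{s}_{k+1}$, the ``tangle'' relations $\tilde{t}_k \tilde{g}_{k+1} \tilde{g}_k = \tilde{t}_{k+1}\tilde{g}_k$ and its mirror, and $\tilde{g}_k \tilde{g}_{k+1} \tilde{g}_k = \tilde{g}_k$. Here the relevant strategy is to use Proposition \ref{prop:kill_f}, parts \emph{2(i)--(v)}, to collapse each inserted $\ff$ between pairs of $s$'s and $e$'s, and then use Lemma \ref{lem:commute_b} to commute all the $b_j^{\pm 1/2}$ factors to one side. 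Along the way one needs identities such as $e_k e_{k+1} e_k = e_k$, $s_k e_{k+1} e_k = s_{k+1} e_k$ (VW.\ref{6a}--\ref{6d}) combined with careful bookkeeping of how $b_j$ transforms when conjugated by $s_k, e_k$ on the $\ff$-image. The hard part is that the square roots $\sqrt{b_{k+1}/b_k}$ do not commute naively with $s_k$: one must rewrite $Q_k s_k$ using $s_k b_{k+1} = b_k s_k - e_k + 1$ (Lemma \ref{lem:commute_b}), so that the result is expressible in terms of $\sqrt{b_j/b_i} s_k$ plus an $e_k$-correction; these corrections will be visible only after multiplying out three factors, and their cancellation is what makes the relations hold. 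A useful reduction is to note that both sides of each relation kill the ``$(1-\ff)$''-components, so one can verify everything after multiplying by $\ff$ on both sides and invoking $\ff Q_k \ff = Q_k$.

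For surjectivity, observe that the Jucys--Murphy elements $\xi_k \in {\rm Br}_d(\delta)$ map to $-y_k \ff$ by Proposition \ref{jucysmurphy} (cited in the introduction), so $y_k \ff$ and hence every polynomial in the $y_j$'s lies in the image. In particular $b_k^{\pm 1/2}\ff$, and therefore $Q_k^{\pm 1}$, are in the image; combined with $\tilde{s}_k, \tilde{e}_k$ in the image this gives $\ff s_k \ff$ and $\ff e_k \ff$, which together with $\ff\mC[y_1,\dots,y_d]\ff$ generate $\ff \VWdcycl \ff$. For injectivity the cleanest route is a dimension count: both ${\rm Br}_d(\delta)$ and $\ff \VWdcycl \ff$ are cellular with cells indexed by up-down bitableaux (the first by the classical Brauer cell datum with partitions in $\La_d$, the second by Proposition \ref{surjectivity} restricted to the small-eigenvalue component, i.e.\ bitableaux whose double Young diagrams stay in the lower half-strip, which is in bijection with $\La_d$). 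Matching cell modules under $\Phi_\delta$ gives equality of dimensions and hence injectivity, completing the proof.
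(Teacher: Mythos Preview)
Your outline for well-definedness follows the same strategy as the paper (Section~\ref{sec:proof_thm:main}): verify each Brauer relation for $\tilde s_k,\tilde e_k$ using Proposition~\ref{prop:kill_f} to collapse intermediate $\ff$'s and Lemma~\ref{lem:commute_b} to move $b_j^{\pm1}$ past $s_k$. Your identification of the quadratic relation $\tilde e_k^2=\delta\tilde e_k$ and the braid relation as the delicate points is correct; the paper treats these in Lemma~\ref{lem:e2=deltae} and Proposition~\ref{lem:si_braid} respectively.

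There is, however, a real gap in your surjectivity argument. You correctly deduce that $y_k\ff$, hence $Q_k^{\pm1}$, hence $\ff s_k\ff$ and $\ff e_k\ff$, lie in the image. But the final claim, that $\ff s_k\ff$, $\ff e_k\ff$ and $y_k\ff$ \emph{generate} $\ff\VWdcycl\ff$, is precisely the nontrivial content. Since $\ff$ is not central, a truncated word $\ff x_1\cdots x_r\ff$ with $x_j\in\{s_i,e_i\}$ is not a priori expressible as a product of elements $\ff x_j\ff$; Proposition~\ref{prop:kill_f} only removes intermediate $\ff$'s in a few adjacent configurations, not in arbitrary words. The paper establishes surjectivity by a double induction on the length $r$ and the number of $s_i$-letters in a reduced AMR word: one considers $\ff x_1\cdots x_{r-1}\ff\, c_l\,\ff x_r\ff$ (in the image by induction), uses Proposition~\ref{prop:kill_f} part~1 to erase the middle $\ff$, then pushes $c_l=\beta-y_l$ through $x_1\cdots x_r$ to the outside via relations (VW.\ref{7}), (VW.\ref{8a}), (VW.\ref{8b}), yielding $(\beta\pm y_j)\ff x_1\cdots x_r\ff$ plus shorter or simpler terms. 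Inverting $(\beta\pm y_j)\ff$ completes the step. Your generation statement appears in the paper only as a \emph{corollary} of this argument.

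For injectivity your cellular matching is more than is needed: the paper simply invokes the dimension equality $\dim{\rm Br}_d(\delta)=\dim\ff\VWdcycl\ff=(2d-1)!!$ from \cite[Proposition~4.4]{ES3}, so surjectivity already gives the isomorphism.
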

\begin{proof}
That the map is well-defined follows from a series of statements in Section~\ref{sec:proof_thm:main}. Namely altogether the Lemmas \ref{lem:s2=f} and \ref{lem:sisj=sjsi}, Proposition \ref{lem:si_braid}.

 Lemmas \ref{lem:e2=deltae}, \ref{lem:eiej=ejei}, \ref{lem:eee=e}, \ref{lem:es=e}, \ref{lem:se=es}, and \ref{lem:see=se} prove that the elements $\tilde{s}_k$ and $\tilde{e}_k$ for $1 \leq k < d$ satisfy all the defining relations of the Brauer algebra ${\rm Br}_d(\delta)$.
It suffices to prove surjectivity of $\Phi_\delta$, since the algebras have the same dimension, namely $(2d-1)!!$, by \cite[Proposition 4.4]{ES3}.

To prove surjectivity we use the description of a basis of  $\VWdcycl$ from \cite[Theorem 5.5]{AMR}, see \cite[Corollary 2.25]{ES3} for our special case which says in particular that any element in  $\VWdcycl$ is a linear combination of elements of the form $p_1wp_2$, where  $p_1,p_2\in\mC[y_1,\ldots,y_d]$ with degree $\leq 1$ in each variable and $w=x_1 \cdots x_r$ where  $x_j\in\{s_i,e_i\mid 1\leq i\leq d-1\}$ for $1\leq j\leq r$. We will call such a presentation $x_1 \cdots x_r$ for $w$ a {\it reduced word} if $r$ is chosen minimally to present $w$ in such a form.  

Since by Lemma \ref{lem:jucysmurphy} all the elements $y_k \ff$ are in the image $I$ of $\Phi_\delta$ it suffices to show that $\ff x_1 \cdots x_r \ff\in I$ for any reduced word $x_1 \cdots x_r$.

We show this by two inductions on the sum of the length of the word and the number of $s_i$'s occurring in the expression. For $r=0,1$ the claim is clear, since $y_k \ff\in I$ for all $k$ and so are all polynomial expressions in the $y$'s, e.g. $Q_k^{-1} \ff$ and $\frac{1}{b_k}\ff$, and thus also the elements of the form $\ff s_k \ff$ and $\ff e_k \ff$ for all $k$. Hence the claim is true for $r\leq 1$.

For $r > 1$, we first assume that the expression $\ff x_1 \cdots x_r \ff$ contains no $s_i$'s. In this case assume that $x_r = e_l$ for some $l$, then by induction we know that 
$$ \ff x_1 \cdots x_{r-1} \ff c_{l} \ff x_r \ff $$
is in the image of $\Phi_\delta$, since all three factors are in the image by induction. By Proposition \ref{prop:kill_f} we have
\begin{eqnarray}
\ff x_1 \cdots x_{r-1} \ff c_{l} \ff x_r \ff& =& \ff x_1 \cdots x_{r-1} c_{l} x_r \ff \nonumber\\
&=& \beta \ff x_1 \cdots x_{r-1} x_r \ff + \ff x_1 \cdots x_{r-1} y_{l+1} x_r \ff \nonumber\\
&=& \beta \ff x_1 \cdots x_{r-1} x_r \ff \pm \left\lbrace
\begin{array}{ll}
y_j \ff x_1 \cdots x_{r-1} x_r \ff & \text{for some } j \text{ or } \\
\ff x_1 \cdots x_{r-1} x_r \ff y_j & \text{for some } j.
\end{array}\right.
\end{eqnarray}
The last equality is possible because the word was assumed to be reduced, thus the element $y_{l+1}$ can be moved to the outside by using  repeatedly Relation~(VW.\ref{8a}) and (VW.\ref{8b}) - only creating a possible sign change. Since $\frac{1}{\beta \pm y_j} \ff\in I$, it follows that $\ff x_1 \cdots x_r \ff\in I$.
Assume now that the reduced word $\ff x_1 \cdots x_r \ff$ for some $w$ contains a positive number, say $m$, of $s$'s. Let $l$ be such that $x_r \in \{e_l,s_l\}$. Then again by induction we know that 
$$ \ff x_1 \cdots x_{r-1} \ff c_{l} \ff x_r \ff\in I. $$
 Using again Proposition \ref{prop:kill_f} we obtain a similar expression as before, namely
\begin{eqnarray*}
\ff x_1 \cdots x_{r-1} \ff c_{l} \ff x_r \ff& =& \ff x_1 \cdots x_{r-1} c_{l} x_r \ff \\
&=& \beta \ff x_1 \cdots x_{r-1} x_r \ff + \ff x_1 \cdots x_{r-1} y_{l+1} x_r \ff \\
&=& \beta \ff x_1 \cdots x_{r-1} x_r \ff \pm \left\lbrace
\begin{array}{ll}
y_j \ff x_1 \cdots x_{r-1} x_r \ff & \text{for some } j \text{ or } \\
\ff x_1 \cdots x_{r-1} x_r \ff y_j & \text{for some } j
\end{array}\right.\\
&&\hspace{3cm}+ \text{ smaller summands.}
\end{eqnarray*}
The smaller summands do not contain any $y_j$'s in this case, see Relation~(VW.\ref{7}), and are moreover either of length smaller than $r$, or of length $r$, but then with  strictly less than $m$ letters $s$'s.  Since by induction all these smaller terms are contained in $I$, the claim follows also in this case. Thus $\Phi_\delta$ is surjective and the theorem follows.
\end{proof}

\begin{remark}
\label{theremark}
  The main feature of our isomorphism is the change of the
  parameter $N$ for $\VWdcycl$ to the corresponding parameter $\delta$  of ${\rm Br}_d(\delta)$; the most important relation we have to prove is 
  \begin{eqnarray*}
  (Q_k e_k Q_k)^2= \delta \, Q_k e_k Q_k.
  \end{eqnarray*}
  Essentially, this amounts to check that
  \begin{eqnarray}
   \label{eq:30} 
  Q_k e_k \frac{b_{k+1}}{b_k} \ff e_k Q_k
  &=&  (2\beta + 1) Q_k e_k Q_k - N Q_k e_k Q_k = \delta \, Q_k e_k Q_k.
\end{eqnarray}
  see Lemma~\ref{lem:e2=deltae}. By
  \cite{Nazarov} we
  can take a formal variable \(u\) and write
  \begin{eqnarray}
    \label{eq:55}
    e_k \frac{1}{u-y_k} e_k \ff &=& \frac{W_k(u)}{u} e_k \ff,
  \end{eqnarray}
  where \(W_k(u)\) is a formal power series in
  \(u^{-1}\) as in \cite{Nazarov}. We may now be tempted to replace \(u =
  -\beta\) and be able to compute
  \(W_k(-\beta)=\beta_1\), and hence
  obtain \eqref{eq:30} from \eqref{eq:55}. Now, while this
  can be formalized in the semisimple case (by using the
  eigenvalues of \(y_k\), as done several times in
    \cite{Nazarov}), it gets much more tricky in the
  non-semisimple case. Hence we need to take another way
  using the formalism from
  Section~\ref{sec:invers-square-roots}. 
  \label{rem:5}
\end{remark}

\begin{corollary}
The algebra $\mathbf{f} \VWdcycl \mathbf{f}$ is generated by the elements $\ff s_i \ff$, $\ff e_i \ff$, and $ y_k \ff$ for $1 \leq i < d$ and $1 \leq k \leq d$.
\end{corollary}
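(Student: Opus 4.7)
The plan is to deduce the corollary directly from the isomorphism theorem. Let $A$ denote the unital subalgebra of $\ff \VWdcycl \ff$ generated by the elements $\ff s_i \ff$ and $\ff e_i \ff$ for $1 \leq i < d$, together with $y_k \ff$ for $1 \leq k \leq d$. Our goal is to show $A = \ff \VWdcycl \ff$. The clean strategy is to establish that the image of $\Phi_\delta$ is contained in $A$ and then invoke the surjectivity provided by Theorem \ref{thm:main}.

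First I would verify that all ingredients appearing in the formulas for $\Phi_\delta(t_k)$ and $\Phi_\delta(g_k)$ lie in $A$. The elements $b_k \ff = (\beta + y_k)\ff$ are in $A$ by construction, and $A$ contains the commutative subalgebra $\mathbb{C}[y_1\ff,\ldots,y_d\ff]$. Since $\ff \VWdcycl \ff$ is finite-dimensional, Lemma \ref{prop:2} guarantees that both the inverse $\frac{1}{b_k}\ff$ and the square root $Q_k = \sqrt{b_{k+1}/b_k}\ff$ can be expressed as polynomials in the $y_j \ff$, and hence belong to $A$.

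The key manipulation is then to rewrite the defining formulas in a way that exposes the sandwich structure $\ff(\cdot)\ff$. Since $Q_k = Q_k \ff = \ff Q_k$ by construction, we have
$$Q_k s_k Q_k = (Q_k \ff)\, s_k\, (\ff Q_k) = Q_k (\ff s_k \ff) Q_k \in A,$$
and analogously $Q_k e_k Q_k = Q_k (\ff e_k \ff) Q_k \in A$. Consequently
$$\Phi_\delta(t_k) = -Q_k s_k Q_k + \tfrac{1}{b_k}\ff \in A \quad \text{and} \quad \Phi_\delta(g_k) = Q_k e_k Q_k \in A$$
for all $1 \leq k < d$. Since ${\rm Br}_d(\delta)$ is generated as an algebra by $\{t_k, g_k\}$, the entire image $\Phi_\delta({\rm Br}_d(\delta))$ is contained in $A$.

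To conclude, Theorem \ref{thm:main} asserts that $\Phi_\delta$ is an isomorphism, so $\Phi_\delta({\rm Br}_d(\delta)) = \ff \VWdcycl \ff$. Combined with the obvious inclusion $A \subseteq \ff \VWdcycl \ff$, this forces $A = \ff \VWdcycl \ff$. The only substantive point in the entire argument is the appeal to Lemma \ref{prop:2} to ensure that the square roots defining $Q_k$ are really polynomial expressions in the $y_j \ff$; once this is granted, the proof is pure bookkeeping driven by the identities $Q_k = Q_k\ff = \ff Q_k$.
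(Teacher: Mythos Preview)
Your argument is correct and is essentially what the paper intends by ``follows immediately from the proof of Theorem~\ref{thm:main}'': the surjectivity of $\Phi_\delta$ established there, combined with the observation (via Lemma~\ref{prop:2}) that $Q_k$ and $\tfrac{1}{b_k}\ff$ are polynomials in the $y_j\ff$ so that $\Phi_\delta(t_k)$ and $\Phi_\delta(g_k)$ lie in $A$, forces $A=\ff\VWdcycl\ff$. The only minor difference in emphasis is that the paper likely has in mind rerunning the induction from the surjectivity proof directly inside $A$, whereas you use the isomorphism as a black box; both readings are equivalent.
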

\begin{proof}
This follows immediately from the proof of Theorem \ref{thm:main}.
\end{proof}

Before the proof the theorem we describe the preimages of the polynomial generators $y_k$. 

\subsection{The map $\Phi_\delta$ and Jucys-Murphy elements}
\label{JM}
The {\it Jucys-Murphy elements} $\xi_k$, for $1\leq k\leq d$ in the Brauer algebra ${\rm Br}_d(\delta)$ are defined as follows:
\begin{eqnarray}
 \label{jucysmurphy}
\xi_1 = 0 &\text{ and } &\xi_{k+1} = t_k \xi_{k} t_k + t_k - g_k \text{ for all } 1 < k < d. 
\end{eqnarray}

\begin{prop} \label{lem:jucysmurphy}
The map $\Phi_\delta$ from Theorem \ref{thm:main} maps $\xi_k - \alpha$ to $-y_k \ff$.
\end{prop}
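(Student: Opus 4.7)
The plan is to prove the statement by induction on $k$, exploiting a clean deformed analog of relation~(VW.\ref{7}) in the truncated algebra $\mathbf{f}\VWdcycl\mathbf{f}$.

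For the base case $k=1$, we have $\xi_1 = 0$, so $\Phi_\delta(\xi_1 - \alpha) = -\alpha\mathbf{f}$. On the other hand, the cyclotomic relation $(y_1-\alpha)(y_1-\beta)=0$ combined with the definition of $\mathbf{f}$ (which projects away from the generalized $\beta$-eigenspace of $y_1$) yields $y_1\mathbf{f} = \alpha\mathbf{f}$, giving the required equality.

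For the inductive step, assume $\Phi_\delta(\xi_k-\alpha) = -y_k\mathbf{f}$. Using $t_k^2 = 1$ the Brauer recursion rewrites as $\xi_{k+1} - \alpha = t_k(\xi_k-\alpha)t_k + t_k - g_k$, and applying $\Phi_\delta$ gives
\[
\Phi_\delta(\xi_{k+1}-\alpha) \;=\; -\tilde{s}_k\,y_k\,\tilde{s}_k + \tilde{s}_k - \tilde{e}_k.
\]
The central step is to establish a ``deformed (VW.\ref{7})'' identity in $\mathbf{f}\VWdcycl\mathbf{f}$:
\[
\tilde{s}_k\,y_k \;=\; y_{k+1}\,\tilde{s}_k \;-\; \tilde{e}_k \;+\; \mathbf{f}.
\]
To prove this I would expand $\tilde{s}_k = -Q_k s_k Q_k + \tfrac{1}{b_k}\mathbf{f}$, use that $y_k$ commutes with $Q_k$, $b_k$, $b_{k+1}$ and $\mathbf{f}$ (all polynomials in the $y_j$'s), then push $y_k$ past $s_k$ via (VW.\ref{7}). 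Comparing with $y_{k+1}\tilde{s}_k = -y_{k+1}Q_k s_k Q_k + \tfrac{y_{k+1}}{b_k}\mathbf{f}$, the $y_{k+1}Q_k s_k Q_k$ terms cancel and the residual contribution is
\[
-\tilde{e}_k \;+\; \frac{b_{k+1} + y_k - y_{k+1}}{b_k}\,\mathbf{f} \;=\; -\tilde{e}_k \;+\; \frac{\beta + y_k}{b_k}\,\mathbf{f} \;=\; -\tilde{e}_k + \mathbf{f},
\]
where we used $Q_k^2 = \tfrac{b_{k+1}}{b_k}\mathbf{f}$ and $b_k = \beta + y_k$.

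Once this identity is in hand, multiplying on the right by $\tilde{s}_k$ and using $\tilde{s}_k^2 = \mathbf{f}$ together with $\tilde{e}_k\tilde{s}_k = \tilde{e}_k$ (both established as part of Theorem~\ref{thm:main}; note these are the images of the Brauer relations $t_k^2=1$ and $g_k t_k = g_k$), we obtain
\[
\tilde{s}_k\,y_k\,\tilde{s}_k \;=\; y_{k+1}\,\mathbf{f} \;-\; \tilde{e}_k + \tilde{s}_k,
\]
whence $\Phi_\delta(\xi_{k+1}-\alpha) = -y_{k+1}\mathbf{f}$, closing the induction. The main obstacle is the clean collapse of the polynomial correction terms in the deformed (VW.\ref{7}): everything hinges on the coincidence $\beta + y_k = b_k$, which is exactly what makes the definitions of $Q_k$ and $b_k$ compatible with conjugating the Jucys--Murphy recursion.
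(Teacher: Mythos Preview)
Your proof is correct and follows essentially the same computation as the paper's: both arguments reduce to the induction with base case $y_1\mathbf{f}=\alpha\mathbf{f}$, expand $\tilde{s}_k$ to push $y_k$ past $s_k$ via (VW.\ref{7}), and collapse the residual polynomial terms using $b_{k+1}+y_k-y_{k+1}=b_k$, together with $\tilde{s}_k^2=\mathbf{f}$ and $\tilde{e}_k\tilde{s}_k=\tilde{e}_k$. The only difference is organisational: you isolate the identity $\tilde{s}_k y_k = y_{k+1}\tilde{s}_k - \tilde{e}_k + \mathbf{f}$ as an intermediate step, whereas the paper carries out the same manipulations inline without naming this relation; this makes your write-up slightly more transparent but is not a genuinely different route.
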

\begin{proof}
We prove this by induction on $k$, for $k=1$ we have 
$$\Phi_\delta(\xi_1 - \alpha) = \Phi_\delta(-\alpha) = -\alpha \ff = -y_1 \ff.$$
For $k+1 > 1$ we calculate
\begin{eqnarray*}
\Phi_\delta(\xi_{k+1} - \alpha) &=& \Phi_\delta(t_k \xi_k t_k + t_k - g_k - \alpha)\quad =\quad \Phi_\delta(t_k (\xi_k - \alpha) t_k + t_k - g_k)\\
&=& - \tilde{s}_k y_k \tilde{s}_k + \tilde{s}_k - \tilde{e}_k\quad =\quad Q_k s_k Q_k y_k \tilde{s}_k + \tilde{s}_k - \tilde{e}_k - \frac{1}{b_k} \ff y_k \tilde{s}_k\\
&=& Q_k (y_{k+1} s_k + e_k -1 )Q_k \tilde{s}_k + \tilde{s}_k - \tilde{e}_k - \frac{1}{b_k} \ff y_k \tilde{s}_k\\
&=& y_{k+1} Q_k s_kQ_k\tilde{s}_k + \tilde{e}_k \tilde{s}_k - \frac{b_{k+1}}{b_k}\ff \tilde{s}_k + \tilde{s}_k - \tilde{e}_k - \frac{1}{b_k} \ff y_k \tilde{s}_k\\
&=& -y_{k+1} \tilde{s}_k \tilde{s}_k + \frac{y_{k+1}}{b_k}\ff \tilde{s}_k- \frac{b_{k+1}}{b_k}\ff \tilde{s}_k + \tilde{s}_k - \frac{1}{b_k} \ff y_k \tilde{s}_k\\
&=& -y_{k+1} \ff
\end{eqnarray*}
The proposition is proved.
\end{proof}

\section{Consequences: Koszulity and graded decomposition numbers}
\label{sec:consequences}
We deduce now some non-trival consequences of the main theorem. For the whole section $d$ is a positive integer and $\delta \in \mathbb{Z}$. First, one of the main results of \cite{ES3} allows us to equip the Brauer algebra with a grading. To state the result we need some additional notation for graded modules. Let $A$ be a $\mathbb{Z}$-graded algebra. For $M \in A-\operatorname{gmod}$ we denote its \emph{graded endomorphism ring} by
$$\operatorname{end}_A(M) = \bigoplus_{r \in \mathbb{Z}} \operatorname{Hom}_{A-\operatorname{gmod}}(M,M \left\langle r \right\rangle), $$
which becomes a graded ring by putting $\operatorname{end}_A(M)_r = \operatorname{Hom}_{A-\operatorname{gmod}}(M,M \left\langle r \right\rangle)$.
The composition of $f \in \operatorname{Hom}_{A-\operatorname{gmod}}(M,M \left\langle r \right\rangle)$ and $g \in \operatorname{Hom}_{A-\operatorname{gmod}}(M,M \left\langle s \right\rangle)$ is given by $g \left\langle r \right\rangle \circ f$ in the category of graded modules. Note that for a graded lift $\widehat{M} \in A-\operatorname{gmod}$ of $M \in A-\operatorname{mod}$ it holds
$$ {\rm End}_{A-\operatorname{mod}}(M) \cong \operatorname{end}_A(\widehat{M})$$
as (ungraded) algebras.

\begin{prop} \label{prop:brauergrading}
The Brauer algebra ${\rm Br}_d(\delta)$ can be equipped with a $\mathbb{Z}$-grading turning it into a $\mathbb{Z}$-graded algebra ${\rm Br}^{\rm gr}_d(\delta)$.
\end{prop}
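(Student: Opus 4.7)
The strategy is to transport a known grading from $\VWdcycl$ across the isomorphism $\Phi_\delta$ of Theorem~\ref{thm:main}. First I would appeal to the Lie-theoretic realization from Theorem~\ref{iso}, which identifies $\VWdcycl$ (for $n$ sufficiently large, which one may always assume since the isomorphism class of the target is independent of $n$) with the opposite endomorphism algebra $\END_\mg(\MdV)^{\op{opp}}$ inside parabolic category $\mathcal{O}$ of type ${\rm D}$. By the general Koszul-grading theory for blocks of parabolic category $\mathcal{O}$ developed in \cite{BGS} and \cite{Backelin}, together with its explicit incarnation worked out in \cite[Theorem~3.1]{ES3}, this endomorphism algebra carries a positive $\mathbb{Z}$-grading. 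Transporting it back along Theorem~\ref{iso} equips $\VWdcycl$ with a $\mathbb{Z}$-grading.

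The second step is to show that the idempotent $\ff \in \VWdcycl$ is homogeneous of degree zero. This follows from \cite[Section~4.1]{ES3}: each factor $\eta_k$ of $\ff$ is the projection onto a union of blocks in the category-$\mathcal{O}$ picture (namely, those composition factors for which the $y_k$-eigenvalue differs from $\beta$, equivalently, those blocks where the corresponding boxes in the combinatorics of Proposition~\ref{surjectivity} have small content). Any sum of block projectors lies in degree zero in the Koszul grading, so the same is true for $\ff$. Consequently $\ff \VWdcycl \ff$ inherits a $\mathbb{Z}$-grading as a graded idempotent subalgebra.

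Finally, I would transport this grading back to ${\rm Br}_d(\delta)$ via the algebra isomorphism $\Phi_\delta$ of Theorem~\ref{thm:main}, declaring $b \in {\rm Br}_d(\delta)$ to be homogeneous of degree $r$ if and only if $\Phi_\delta(b)$ is homogeneous of degree $r$ in $\ff \VWdcycl \ff$. This pullback grading defines the desired $\mathbb{Z}$-graded algebra ${\rm Br}^{\rm gr}_d(\delta)$.

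The one nontrivial ingredient is step two, namely the assertion that the generalized eigenspace decomposition of $\MdV$ for the commuting family $y_1,\ldots,y_d$ is compatible with the block decomposition of parabolic category $\mathcal{O}$ (so that $\ff$ really is a central idempotent in the relevant sense and hence automatically of degree zero in the Koszul grading). This compatibility is precisely the combinatorial content of Proposition~\ref{surjectivity} together with \cite[Section~4.1]{ES3}. Granting it, everything else in the proof is formal, so that Proposition~\ref{prop:brauergrading} is essentially a corollary of Theorem~\ref{thm:main} and the results of \cite{ES3}.
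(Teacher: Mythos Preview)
Your proposal is correct and follows essentially the same strategy as the paper: both arguments grade $\VWdcycl$ via its Lie-theoretic realisation in parabolic category $\mathcal{O}$ (invoking \cite{ES3}), observe that the idempotent $\ff$ corresponds to block projections and is therefore homogeneous of degree zero, and then transport the resulting grading on $\ff\VWdcycl\ff$ across $\Phi_\delta$. The only cosmetic difference is that the paper packages the construction in terms of graded lifts $\widehat{\mathcal{F}}$, $\widehat{\mathcal{G}}$ of the translation functors and defines ${\rm Br}^{\rm gr}_d(\delta)$ directly as the graded endomorphism ring ${\rm end}_A(\widehat{\mathcal{G}}^d\widehat{M^\mathfrak{p}(\underline{\delta})})$, whereas you phrase everything at the level of algebras; the content is the same.
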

\begin{proof}
Recall from Section \ref{sec:cycl_and_cat_O} the parabolic category $\mathcal{O}^\mathfrak{p}(n)$. Consider the endofunctor $\mathcal{F} = ? \otimes V$ of $\mathcal{O}^\mathfrak{p}(n)$. Following \cite[Sections 4]{ES3} we have the summand $\mathcal{G}$ of this functor that corresponds to projecting onto blocks with small weights (in \cite{ES3} this functor was denoted by $\widetilde{\mathcal{F}}$) such that
$$ {\rm Br}_d(\delta) \, \cong \, \ff \VWdcycl \ff \, \cong \, \ff {\rm End}_{\mathfrak{g}}( \mathcal{F}^d M^\mathfrak{p}(\underline{\delta})) \ff \, \cong \, {\rm End}_{\mathfrak{g}}( \mathcal{G}^d M^\mathfrak{p}(\underline{\delta})),$$
as algebras.
By choosing a minimal projective generator $P$ of $\mathcal{O}^\mathfrak{p}(n)$ we have an equivalence of categories 
$$\mathcal{O}^\mathfrak{p}(n) \cong {\rm mod}-{\rm End}_\mathfrak{g}(P).$$
Following \cite{ES2} we equip $A:={\rm End}_\mathfrak{g}(P)$ with a Koszul grading and denote by $\widehat{\mathcal{O}}^\mathfrak{p}(n):=A-\operatorname{gmod}$ its associated category of graded modules. In \cite[Section 5]{ES3} a graded lift $\widehat{\mathcal{F}}$ of $\mathcal{F}$ is constructed by choosing graded lifts for each summand obtained by projecting onto blocks, see \cite[Lemma 5.3]{ES3}. Thus it also yields a graded lift $\widehat{\mathcal{G}}$ of $\mathcal{G}$ and gives
$$ {\rm Br}_d^{\rm gr}(\delta) \, := \, {\rm end}_A( \widehat{\mathcal{G}}^d \widehat{M^\mathfrak{p}(\underline{\delta})}),$$
where $\widehat{M^\mathfrak{p}(\underline{\delta})}$ is the standard graded lift of the parabolic Verma module.
\end{proof}

With this grading one can establish Koszulity.

\begin{theorem} \label{thm:koszul}
The Brauer algebra ${\rm Br}^{\rm gr}_d(\delta)$ is Morita equivalent to a Koszul algebra if and only if $\delta \neq 0$ or $\delta=0$ and $d$ odd.
\end{theorem}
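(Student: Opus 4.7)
The plan is to deduce Koszulity from the main isomorphism of Theorem~\ref{thm:main} combined with the Koszulity results for the Koszul algebra $A = {\rm End}_\mathfrak{g}(P)$ governing $\mathcal{O}^\mathfrak{p}(n)$, as established in \cite{ES2}. By Proposition~\ref{prop:brauergrading} the graded Brauer algebra is realized as $\operatorname{end}_A(\widehat{\mathcal{G}}^d \widehat{M^\mathfrak{p}(\underline{\delta})})$, i.e.\ as the graded endomorphism ring of a graded lift of a module over a Koszul algebra. The problem thus reduces to determining exactly when such an endomorphism ring is Morita equivalent to a Koszul algebra.

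For the positive direction ($\delta \neq 0$, or $\delta=0$ with $d$ odd), I would proceed in two steps. First, I would verify that $\mathcal{G}^d M^\mathfrak{p}(\underline{\delta})$ is a projective module in the truncation of $\mathcal{O}^\mathfrak{p}(n)$ cut out by the idempotent $\ff$: the endofunctor $\mathcal{F} = {?} \otimes V$ and its block summand $\mathcal{G}$ are exact projective functors, and $M^\mathfrak{p}(\underline{\delta})$ is projective in its (singular) block, so the iterated image is again projective. Second, I would invoke the standard Koszulity-transfer principle (Backelin, \cite{BGS}, and its refinement due to Mazorchuk--Ovsienko--Stroppel): for a Koszul algebra $A$ and a graded projective module $N$ whose indecomposable summands admit linear projective resolutions, the algebra $\operatorname{end}_A(N)^{\rm opp}$ is Morita equivalent to a Koszul algebra. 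The required linearity condition reduces, via Theorem~\ref{first}.4, to the fact that the graded decomposition numbers of $\widehat{\Delta}(\lambda)$ are given by parabolic Kazhdan--Lusztig polynomials $n_{\lambda,\mu}(q)$, which encode precisely the purity of $\operatorname{Ext}$-groups needed for Koszulity; this is already built into the graded lift $\widehat{\mathcal{G}}$ chosen in \cite[Lemma 5.3]{ES3}.

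For the negative direction ($\delta = 0$ with $d$ even), the empty partition lies in $\Lambda_d \setminus \Lambda_d^0$, so $\Delta(\emptyset)$ has no simple head in ${\rm Br}_d(0)$--mod. This obstructs Koszulity: the smallest example mentioned after Theorem~\ref{first} already shows ${\rm Br}_2^{\rm gr}(0) \cong \mathbb{C} \oplus \mathbb{C}[x]/(x^2)$ with $x$ in degree $2$. A local Koszul algebra must be generated in degree $1$ (its quadratic dual controls the Yoneda product), so $\mathbb{C}[x]/(x^2)$ with $x$ in degree $2$ cannot be Koszul nor Morita equivalent to any Koszul algebra. To lift this obstruction to arbitrary even $d$, one identifies the block containing $\emptyset$ inside ${\rm Br}_d^{\rm gr}(0)$ as a direct summand that retains this pathological local structure; the combinatorial translation to the side of double Young diagrams in Section~\ref{sec:cycl_and_cat_O} tracks this block explicitly.

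The main obstacle is verifying the linearity of projective resolutions for the summands of $\widehat{\mathcal{G}}^d \widehat{M^\mathfrak{p}(\underline{\delta})}$ in the Koszul category $A$--gmod. This is not automatic from the Koszulity of $A$ alone: it requires that the graded lift of the translation functor $\mathcal{G}$ chosen in \cite{ES3} is \emph{shift-correct}, so that the induced projective resolutions of its values are linear. Once this bookkeeping is in place, both directions follow from the results of \cite{ES3}, and the quasi-heredity criterion of Theorem~\ref{first}.3 explains why the dichotomy at $\delta=0$, $d$ even is exactly the boundary between the Koszul and non-Koszul ranges.
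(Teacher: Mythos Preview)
The paper's own proof is a one-line citation: it combines the isomorphism of Theorem~\ref{thm:main} with \cite[Theorem~5.1]{ES3}, which already contains the full ``if and only if'' statement for $\ff\VWdcycl\ff$. Your proposal instead tries to reconstruct the content of that cited result, which is a reasonable exercise but contains a genuine gap.

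The problem is your ``Koszulity-transfer principle''. As you state it, the hypothesis that the indecomposable summands of a \emph{projective} module $N$ admit linear projective resolutions is vacuous: a projective module is its own projective resolution. So your principle would assert that $eAe$ is Morita equivalent to a Koszul algebra for \emph{every} homogeneous idempotent $e$ in a Koszul algebra $A$, and this is well known to be false. Idempotent truncations of Koszul algebras are not Koszul in general; one needs a specific structural hypothesis on $e$ (for instance, that it cuts out a coideal in the highest-weight poset, or that the simples it kills have linear projective resolutions over $A$). The actual argument in \cite{ES3} uses exactly such a structural fact about the idempotent $\ff$, tied to the combinatorics of small versus large eigenvalues, and this is what your sketch is missing.

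Your negative direction also needs care. The algebra $\mathbb{C}[x]/(x^2)$ with $x$ in degree~$2$ is not Koszul with that grading, but as an \emph{ungraded} algebra it is isomorphic to $\mathbb{C}[x]/(x^2)$ with $x$ in degree~$1$, which \emph{is} Koszul. So the obstruction you describe only works if ``Morita equivalent to a Koszul algebra'' is read as a graded statement (i.e.\ the basic algebra, with the inherited grading, is Koszul); you should make this explicit. The lift from $d=2$ to general even $d$ is also only gestured at; it requires identifying a block of ${\rm Br}_d^{\rm gr}(0)$ with the same local defect, which is again part of what \cite[Theorem~5.1]{ES3} supplies.
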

\begin{proof}
This follows directly from our main Theorem \ref{thm:main} together with \cite[Theorem 5.1]{ES3}.
\end{proof}

\begin{prop} \label{thm:cellularity}
The Brauer algebra ${\rm Br}^{\rm gr}_d(\delta)$ is graded cellular.
\end{prop}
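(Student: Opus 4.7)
The plan is to transfer a graded cellular structure from the cyclotomic VW-algebra $\VWdcycl$ to its idempotent truncation $\ff\VWdcycl\ff$, which by Theorem~\ref{thm:main} together with the grading constructed in Proposition~\ref{prop:brauergrading} is identified with ${\rm Br}_d^{\rm gr}(\delta)$.

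First I would recall that $\VWdcycl$ is graded cellular. In the cases where the underlying block of parabolic category $\mathcal O$ is Koszul this is contained in \cite{ES3} via the identification with (a suitable enlargement of) the generalized Khovanov algebras of type $\rm D$; the cell modules are the graded lifts of the parabolic Verma modules classified by the up-down bitableaux of Proposition~\ref{surjectivity}, and the cellular anti-involution is the natural one coming from flipping diagrams vertically, which on generators fixes $s_i$, $e_i$, and $y_j$.

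Next, I would check that $\ff$ is a \emph{graded cellular idempotent}, i.e. that it is homogeneous and fixed by the cellular anti-involution. Homogeneity of degree zero follows from the fact that $\ff$ is the projection onto the summand of $\Mde\otimes V^{\otimes d}$ spanned by the small-eigenvalue generalized weight spaces; since these weight spaces coincide with a union of blocks of category $\mathcal O$ (see \cite[Section 4]{ES3}), $\ff$ is a sum of block idempotents, which are automatically degree zero in the Koszul grading. Invariance under the anti-involution is clear because $\ff$ is a polynomial in the $y_j$'s, which are themselves fixed. The graded analogue of the K\"onig--Xi theorem on idempotent truncations of cellular algebras then immediately produces a graded cellular basis $\{\ff\,c^\la_{s,t}\,\ff\}$ of $\ff\VWdcycl\ff$ indexed by those cells $\la$ on which $\ff$ acts nontrivially. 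The identification of these surviving cells with $\La_d$ is a bookkeeping exercise using the fact that, under Proposition~\ref{surjectivity}, an up-down bitableau contributes to $\ff\VWdcycl\ff$ iff every intermediate double Young diagram sits strictly below the line $o$, which recovers precisely the up-down tableaux indexing the Graham--Lehrer cell modules of ${\rm Br}_d(\delta)$.

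The main obstacle is the exceptional case $\delta=0$ with $d$ even, where the Morita-to-Koszul reduction underpinning Theorem~\ref{thm:koszul} fails, so a graded cellular structure on $\VWdcycl$ is not immediately available from the ES3 machinery. In that case I would start from the ungraded cellular basis of $\VWdcycl$ provided by \cite{AMR} and refine it using the decomposition into generalized $y$-weight spaces, which remain graded subspaces since block idempotents of parabolic category $\mathcal O$ of type $\rm D$ are still homogeneous even in this non-Koszul situation, and verify directly on the basis elements that cell filtrations are compatible with the grading. The rest of the argument then proceeds by idempotent truncation exactly as before.
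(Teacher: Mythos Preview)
Your approach is essentially the same as the paper's: both invoke the K\"onig--Xi result that idempotent truncations of cellular (or quasi-hereditary) algebras remain cellular, applied to $\ff\VWdcycl\ff$. The paper's proof is a single sentence citing \cite{Koenig-Xi} and the quasi-hereditarity of $\VWdcycl$; you have supplied the details the paper suppresses, namely that $\ff$ is homogeneous of degree zero (being a sum of block idempotents) and fixed by the cellular anti-involution (being expressible via the $y_j$'s), so that the graded version of the truncation argument goes through.

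One correction: your separate treatment of the case $\delta=0$, $d$ even is based on a misreading. The failure of Koszulity in Theorem~\ref{thm:koszul} concerns the \emph{truncation} ${\rm Br}_d^{\rm gr}(\delta)$, not $\VWdcycl$ itself. The ambient algebra $\VWdcycl$ is always the endomorphism ring of a projective object in parabolic category $\mathcal O$ of type ${\rm D}$, and hence always inherits the graded (quasi-hereditary, in particular cellular) structure from \cite{ES2}, \cite{ES3} regardless of $\delta$ and $d$. So your main argument already covers all cases uniformly, and the detour through the AMR basis is unnecessary.
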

\begin{proof}
It follows from \cite{Koenig-Xi} that the idempotent truncation of the quasi-hereditary algebra $\VWdcycl$ is always cellular.
\end{proof}

\begin{theorem} \label{thm:quasihereditary}
The Brauer algebra ${\rm Br}^{\rm gr}_d(\delta)$ is graded quasi-hereditary if and only if $\delta\not=0$ or $\delta=0$ and $d$ odd.
\end{theorem}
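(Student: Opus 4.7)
The plan is to apply the Graham--Lehrer criterion \cite{GrahamLehrer}: given the cellularity established in Proposition~\ref{thm:cellularity}, it suffices to decide when every cell module has a nonzero simple head, equivalently when $\Lambda_d=\Lambda_d^\delta$. From the definitions in the introduction, this holds if and only if $\delta\neq 0$, or $\delta=0$ and the empty partition does not belong to $\Lambda_d$, i.e.\ $d$ is odd.

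For the \emph{only if} direction, if $\delta=0$ and $d$ is even then $\emptyset\in\Lambda_d\setminus\Lambda_d^\delta$, so the cell module $\Delta(\emptyset)$ has zero simple head by \cite{CDM}. This obstructs any heredity chain, graded or ungraded, and so ${\rm Br}^{\rm gr}_d(0)$ is not quasi-hereditary.

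For the \emph{if} direction, I would transport quasi-heredity through the isomorphism ${\rm Br}_d(\delta)\cong\ff\VWdcycl\ff$ of Theorem~\ref{thm:main}. The algebra $\VWdcycl$ is graded quasi-hereditary: by Theorem~\ref{iso} combined with Proposition~\ref{prop:brauergrading}, it is the graded endomorphism ring of a projective object in a sum of blocks of parabolic category~$\mathcal{O}$ of type~$\mathrm{D}$. By Proposition~\ref{surjectivity}, $\ff$ is a sum of block projectors for $\VWdcycl$, so $\ff\VWdcycl\ff$ is itself a direct sum of blocks of $\VWdcycl$ and thereby inherits the graded quasi-hereditary structure. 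Its graded standard modules are then precisely the graded lifts $\widehat{\Delta}(\lambda)$ of the Brauer cell modules $\Delta(\lambda)$, indexed by $\lambda\in\Lambda_d=\Lambda_d^\delta$, each with simple head $\widehat{L}(\lambda)$ concentrated in degree zero.

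The main obstacle is to verify that $\ff$ is genuinely a sum of block projectors (as opposed to merely a sum of idempotents inside blocks), since in a non-semisimple block the projection onto generalized $y_k$-eigenspaces need not split the block. This follows from the description of $\ff$ in \cite[Section~4]{ES3}: projection onto the small eigenvalues of $y_1,\ldots,y_d$ corresponds via Theorem~\ref{iso} to projection onto a specific set of blocks of $\mathcal{O}^{\mathfrak p}(n)$, so $\ff$ is homogeneous of degree zero and central in the blocks it preserves, making the idempotent truncation compatible with both the grading and the heredity chain.
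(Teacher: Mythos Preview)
Your ``only if'' direction is sound: when $\delta=0$ and $d$ is even, the empty partition lies in $\Lambda_d\setminus\Lambda_d^\delta$, so the cellular algebra has strictly fewer simples than cells and cannot be quasi-hereditary (graded or not).

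The ``if'' direction, however, rests on a false premise. You assert that $\ff$ is a sum of block projectors for $\VWdcycl$, i.e.\ that $\ff$ is central; the paper states explicitly (just before Proposition~\ref{prop:kill_f}) that $\ff$ is \emph{not} central. The reason is visible already for $d=2$ in Section~\ref{sec:example}: the up-down bitableaux $v_3$ and $v_4$ both end at $(\emptyset,\emptyset)$ and so lie in the same block of $\mathcal{O}^{\mathfrak p}(n)$, hence in the same block of $\VWdcycl$, yet $\ff$ keeps $v_3$ and kills $v_4$. More generally a final weight in the small region can be reached by paths that stay small and by paths that visit the large region and return; these share a block but are separated by $\ff$. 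The phrase ``successive projections onto blocks'' in the introduction refers to projecting after each tensor step, which is \emph{not} the same as a single block projection on $\MdV$. Proposition~\ref{surjectivity} does not assert what you attribute to it.

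Consequently $\ff\VWdcycl\ff$ is a genuine idempotent truncation rather than a ring direct summand, and quasi-heredity does not transfer automatically. The paper's proof avoids this by invoking \cite[Theorem~4.13 and Remark~4.14]{ES3}, a criterion for when the highest weight structure of $\VWdcycl$ descends to $\ff\VWdcycl\ff$; it is precisely in verifying this criterion (the truncated labelling set being an order ideal) that the hypothesis $\delta\neq 0$, or $\delta=0$ and $d$ odd, enters on the ``if'' side as well. The graded structure is then imported from graded category $\mathcal{O}$ via \cite[Theorem~4.9]{ES3}.
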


\begin{proof}
By \cite[Theorem 4.13 and Remark 4.14]{ES3} the highest weight structure of $\VWdcycl$ induces a highest weight structure on $\ff \VWdcycl \ff$ if and only if $\delta\not=0$ or $\delta=0$ and $d$ odd. Since by \cite[Definition 4.11]{ES3} the labelling posets of standard modules for $\ff \VWdcycl \ff$ agrees with the one for the Brauer algebra from \cite{CDM} the isomorphism from Theorem \ref{thm:main} is an isomorphism of quasi-hereditary algebras.

The result then follows from the general theory of graded category $\mathcal{O}$ (see \cite{Stroppel}) using \cite[Theorem 4.9]{ES3}.
\end{proof}

Denote by $\Delta(\lambda)$ for $\lambda \in \Lambda_d$ the standard module for ${\rm Br}_d(\delta)$ and by $L(\lambda)$ for $\lambda \in \Lambda^\delta_d$ the corresponding simple quotient, see \cite{CDM}. As in the introduction denote by $F$ the grading forgetting functor from ${\rm Br}^{\rm gr}_d(\delta)-\operatorname{gmod}$ to ${\rm Br}_d(\delta)-\operatorname{mod}$.

\begin{theorem}\label{thm:liftmodules}
Assume that $\delta \neq 0$ or $\delta = 0$ and $d$ is odd, i.e. the case where ${\rm Br}^{\rm gr}_d(\delta)$ is graded quasi-hereditary. For any $\la\in\La_d$ there exists a unique modules $\widehat{\Delta}(\la) \in {\rm Br}^{\rm gr}_d(\delta)-\operatorname{gmod}$ such that $F\widehat{\Delta}(\la)\cong {\Delta}(\la)$ and for $\lambda \in \Lambda_d^\delta$ there exists a unique $\widehat{L}(\la) \in {\rm Br}^{\rm gr}_d(\delta)-\operatorname{gmod}$ such that $F\widehat{L}(\la)\cong L(\la)$,
and both modules are concentrated in non-negative degrees with non-vanishing degree zero.
\end{theorem}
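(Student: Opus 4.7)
The approach is to combine the graded realization from Proposition \ref{prop:brauergrading}, where the grading on ${\rm Br}^{\rm gr}_d(\delta)$ descends from the Koszul grading on $A = {\rm End}_\mathfrak{g}(P)$, with standard results about graded lifts for positively graded quasi-hereditary algebras. Once the grading is shown to be non-negative with semisimple degree-zero part, both existence and uniqueness of the graded lifts will follow from well-established general machinery.

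First, I verify that ${\rm Br}^{\rm gr}_d(\delta)$ is a \emph{positively} graded algebra with semisimple degree-zero component. Since $A$ is Koszul, it is positively graded with $A_0$ semisimple. By Proposition \ref{prop:brauergrading}, ${\rm Br}^{\rm gr}_d(\delta) \cong \operatorname{end}_A(\widehat{\mathcal{G}}^d \widehat{M^\mathfrak{p}(\underline{\delta})})$; after choosing the graded lift $\widehat{\mathcal{G}}^d \widehat{M^\mathfrak{p}(\underline{\delta})}$ to be a direct sum of graded indecomposable projective $A$-modules without any nonzero overall degree shift, the graded endomorphism ring inherits non-negativity of the grading, and its degree-zero part is a product of matrix algebras over endomorphism rings of simples in $A_0\operatorname{-mod}$, hence semisimple.

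Second, for each simple ${\rm Br}_d(\delta)$-module $L(\mu)$, $\mu\in\Lambda_d^\delta$, I invoke the classical existence-and-uniqueness result (see e.g.\ \cite[Lemma 2.5.3]{BGS}): over a finite-dimensional positively graded algebra with semisimple degree-zero part, every simple module admits a graded lift which is concentrated in a single degree and unique up to overall grading shift. Normalizing the shift so that this degree is zero produces the required $\widehat{L}(\mu)$ with $F\widehat{L}(\mu) \cong L(\mu)$; uniqueness of the normalization makes the lift unique.

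Third, for the standard modules $\Delta(\lambda)$, $\lambda\in\Lambda_d$, I appeal to the general theory of graded quasi-hereditary algebras (cf.\ \cite{Stroppel}, and compare with the construction of graded parabolic Vermas in $\widehat{\mathcal{O}}^\mathfrak{p}(n)$ as discussed in \cite[Section 5]{ES3}): in a positively graded quasi-hereditary algebra with semisimple degree-zero part, each standard module has a distinguished graded lift whose simple head $\widehat{L}(\lambda)$ lives in degree zero. Explicitly, $\widehat{\Delta}(\lambda)$ is the quotient of the graded projective cover $\widehat{P}(\lambda)$ of $\widehat{L}(\lambda)$ by the trace of all $\widehat{P}(\mu)\langle r\rangle$ with $\mu$ not below $\lambda$ in the quasi-hereditary order (which is available by Theorem \ref{thm:quasihereditary}). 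This $\widehat{\Delta}(\lambda)$ is concentrated in non-negative degrees with non-vanishing degree zero, satisfies $F\widehat{\Delta}(\lambda)\cong\Delta(\lambda)$, and is unique because its simple head $\widehat{L}(\lambda)$ is uniquely normalized.

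The main obstacle is the very first step: confirming that the inherited grading on ${\rm Br}^{\rm gr}_d(\delta)$ is genuinely non-negative with semisimple degree-zero part, since once this is in place the remainder of the argument is formal. This positivity ultimately rests on the choice of graded lift $\widehat{\mathcal{G}}^d$ made in the proof of Proposition \ref{prop:brauergrading} via \cite[Lemma 5.3]{ES3}, and on compatibility of this lift with the idempotent truncation induced by $\ff$.
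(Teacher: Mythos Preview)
Your approach differs from the paper's and, while plausible in outline, leaves a genuine gap at precisely the point you flag as the ``main obstacle.''

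The paper does \emph{not} argue via positivity of the grading on ${\rm Br}^{\rm gr}_d(\delta)$. Instead it proceeds in two moves: first, graded lifts of standard and simple modules exist for $\VWdcycl$ by \cite[Theorem 4.9]{ES3} together with the general graded category $\mathcal{O}$ machinery of \cite{Stroppel}; second, these lifts are transported to $\ff\VWdcycl\ff$ via the quotient functor construction of \cite[Theorem 4.13]{ES3}. Uniqueness (up to shift) is handled by \cite[Lemma 1.5]{Stroppel}, and the degree normalization then pins down the shift. This route never touches the question of whether ${\rm Br}^{\rm gr}_d(\delta)$ itself is positively graded with semisimple degree-zero part.

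Your route, by contrast, requires exactly that positivity statement for the \emph{specific} grading fixed in Proposition \ref{prop:brauergrading}. The sentence ``after choosing the graded lift $\widehat{\mathcal{G}}^d \widehat{M^\mathfrak{p}(\underline{\delta})}$ to be a direct sum of graded indecomposable projective $A$-modules without any nonzero overall degree shift'' is problematic: the graded lift is already \emph{prescribed} by the construction in Proposition \ref{prop:brauergrading} via \cite[Lemma 5.3]{ES3}, and you are not free to renormalize the summands without potentially changing the graded algebra (different shifts on summands yield graded endomorphism rings that are graded Morita equivalent but not graded isomorphic, so the degree conditions in the statement need not transfer). To make your argument complete you would have to show that the functor $\widehat{\mathcal{G}}^d$ applied to $\widehat{M^\mathfrak{p}(\underline{\delta})}$ already produces a graded projective whose indecomposable summands have heads in degree zero; this is believable from the way graded translation functors are set up in \cite{ES3}, but it is an additional verification you have not carried out. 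The paper's quotient-functor argument is designed precisely to avoid this check.
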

\begin{proof}
From \cite[Lemma 1.5]{Stroppel} it follows that graded lifts, if they exist, are unique up to isomorphism and grading shifts. With the assumption on the degree of the modules the graded lifts will be unique up to isomorphism in our case.

For $\VWdcycl$ the existence of graded lifts follows from \cite[Theorem 4.9]{ES3} and general theory of category $\mathcal{O}$, see \cite{Stroppel}.

The existence of the graded lifts for $\ff \VWdcycl \ff$ then follows from \cite[Theorem 4.13]{ES3} via a quotient functor construction.
\end{proof}

We now want to match the multiplicities of simple modules occurring in a standard module with the coefficients of certain Kazhdan-Lusztig polynomials. Denote by $W$ the Weyl group of $\mathfrak{g}$ and by $W_\mathfrak{p}$ the parabolic subgroup generated by all simple roots except $\alpha_0$, i.e., the one corresponding to the parabolic $\mathfrak{p}$ from the introduction and Section \ref{sec:cycl_and_cat_O}. By $W^\mathfrak{p}$ we denote the shortest coset representatives in $W_\mathfrak{p} \backslash W$. For $x,y \in W^\mathfrak{p}$ let $n_{x,y}(q) \in \mathbb{Z}[q]$ be the parabolic Kazhdan-Lusztig polynomial of type $({\rm D}_n,{\rm A}_{n-1})$, see \cite{Boe} and for this special case \cite{LS}.

Given now $\nu \in X_n^\mathfrak{p}$ there is a unique $\nu_{\rm dom} \in X_n^\mathfrak{p}$ and $x_\nu \in W^\mathfrak{p}$ such that $\nu_{\rm dom} + \rho$ is dominant and
$$ x_\nu(\nu_{\rm dom} + \rho) = \nu + \rho.$$

We now give a dictionary how to translate between the labelling set of standard modules and Weyl group elements. To a partition $\lambda$ we associate a double Young diagram $Y(\lambda)$ via the bipartition $(\lambda,\emptyset)$ and a weight ${\rm wt}(\lambda)=\underline{\delta} + {\rm wt}(Y(\lambda))$. For $\lambda,\mu \in \La_d$ we put
$$ 
n_{\lambda,\mu}(q) = \left\lbrace \begin{array}{ll}
n_{x_{{\rm wt}(\lambda)}, x_{{\rm wt}(\mu)}}(q) & \text{if } {\rm wt}(\lambda) + \rho \in W \cdot ({\rm wt}(\mu)+\rho), \\
0 & \text{otherwise.}
\end{array}\right.
$$

\begin{theorem}\label{thm:multiplicities}
For $\lambda \in \La_d$, the module $\widehat{\Delta}(\la)$ has a Jordan-H\"older series in ${\rm Br}^{\rm gr}_d(\delta)-\operatorname{gmod}$ with multiplicities  given by 
\begin{eqnarray*}
\left[\widehat{\Delta}(\la)\, :\, \widehat{L}(\mu)<i>\right]&=&n_{\lambda,\mu,i},
\end{eqnarray*}
where $n_{\lambda,\mu}(q)=\sum_{i\geq 0}n_{\lambda,\mu,i} q^i$ and $\mu \in \La_d^\delta$.
\end{theorem}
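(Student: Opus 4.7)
The plan is to transport the graded Jordan--H\"older problem in ${\rm Br}^{\rm gr}_d(\delta)-\operatorname{gmod}$ to the analogous problem for graded parabolic Verma modules in the graded parabolic category $\widehat{\mathcal{O}}^\mathfrak{p}(n)$ of type ${\rm D}_n$, where the multiplicities are known to be given by the parabolic Kazhdan--Lusztig polynomials $n_{x,y}(q)$ of type $({\rm D}_n, {\rm A}_{n-1})$ via Beilinson--Ginzburg--Soergel/Backelin type results.

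First, I would use the identification established in (the proof of) Proposition \ref{prop:brauergrading},
\begin{equation*}
{\rm Br}_d^{\rm gr}(\delta) \;\cong\; {\rm end}_A(\widehat{\mathcal{G}}^d \widehat{M^\mathfrak{p}(\underline{\delta})}),
\end{equation*}
where $A={\rm End}_\mathfrak{g}(P)$ (for a minimal projective generator $P$ of $\mathcal{O}^\mathfrak{p}(n)$) is equipped with its Koszul grading from \cite{ES2}. The graded hom functor $\operatorname{Hom}_A(\widehat{\mathcal{G}}^d \widehat{M^\mathfrak{p}(\underline{\delta})},-)$ yields an exact graded quotient functor from the relevant blocks of $\widehat{\mathcal{O}}^\mathfrak{p}(n)$ to ${\rm Br}_d^{\rm gr}(\delta)-\operatorname{gmod}$. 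By the dictionary in front of the theorem, to each $\lambda \in \La_d$ one associates a weight ${\rm wt}(\lambda) \in X_n^\mathfrak{p}$; combining \cite[Theorem 4.13]{ES3} (which identifies the highest weight structure of $\ff\VWdcycl\ff$ with a truncation of the one on $\VWdcycl$ via the Schur--Weyl picture) with the uniqueness part of Theorem~\ref{thm:liftmodules}, this functor sends the standard graded lift $\widehat{M^\mathfrak{p}({\rm wt}(\lambda))}$ to $\widehat{\Delta}(\lambda)$ and the graded simple $\widehat{L^\mathfrak{p}({\rm wt}(\mu))}$ either to $\widehat{L}(\mu)$ (when $\mu \in \La_d^\delta$) or to $0$, in each case preserving the grading conventions since both sides are characterised by being concentrated in non-negative degrees with simple head in degree $0$.

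Next, I invoke the graded multiplicity formula in $\widehat{\mathcal{O}}^\mathfrak{p}(n)$: by the Koszul duality result of \cite{BGS} together with Backelin's theorem (in type ${\rm D}_n$ with parabolic of type ${\rm A}_{n-1}$), one has
\begin{equation*}
[\widehat{M^\mathfrak{p}(\nu)} : \widehat{L^\mathfrak{p}(\nu')}\langle i\rangle] \;=\; n_{x_\nu,\,x_{\nu'},\,i},
\end{equation*}
whenever $\nu,\nu'$ lie in the same $W$-orbit (and zero otherwise). Since an exact quotient functor preserves Jordan--H\"older multiplicities of simples it does not annihilate, applying it to $\widehat{M^\mathfrak{p}({\rm wt}(\lambda))}$ and combining with the defining case distinction of $n_{\lambda,\mu}(q)$ yields exactly $[\widehat{\Delta}(\lambda) : \widehat{L}(\mu)\langle i\rangle] = n_{\lambda,\mu,i}$ for $\mu \in \La_d^\delta$.

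The main obstacle is the second step: verifying that the graded quotient functor really matches $\widehat{M^\mathfrak{p}({\rm wt}(\lambda))} \leftrightarrow \widehat{\Delta}(\lambda)$ as \emph{graded} modules, not merely up to an overall grading shift. This is where the normalisation of the graded lifts in Theorem~\ref{thm:liftmodules} (concentrated in non-negative degrees with non-zero degree zero part) together with the uniqueness statement \cite[Lemma 1.5]{Stroppel} does the bookkeeping: both $\widehat{M^\mathfrak{p}({\rm wt}(\lambda))}$ and $\widehat{\Delta}(\lambda)$ are pinned down by these properties, so they must correspond on the nose. The combinatorial matching between $\La_d$ and the relevant subset of $W^\mathfrak{p}$ is then a routine unravelling via the double Young diagram bijection.
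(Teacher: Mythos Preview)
Your proposal is correct and follows essentially the same strategy as the paper: pass through an exact graded quotient functor to a larger category where the graded decomposition numbers are already known to be given by the relevant parabolic Kazhdan--Lusztig polynomials, check that standards go to standards and simples to simples (or zero) with the correct grading normalisation, and conclude.

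The only real difference is in the packaging of the intermediate step. The paper does not go directly from $\widehat{\mathcal{O}}^\mathfrak{p}(n)$ to ${\rm Br}^{\rm gr}_d(\delta)$-gmod, but instead factors through the graded cyclotomic VW-algebra: it quotes \cite[Theorem~4.9]{ES3} for the equality $n_{\lambda,\mu,i}=[\widetilde{\Delta}(\lambda):\widetilde{L}(\mu)\langle i\rangle]$ in $\VWdcycl$-gmod (this result already encapsulates the BGS/Backelin input you invoke directly), and then applies the idempotent truncation quotient functor $\mathcal{Q}:\VWdcycl\text{-gmod}\to \ff\VWdcycl\ff\text{-gmod}$, citing \cite{Donkin} and \cite{Koenig-Xi} for the standard facts that $\mathcal{Q}$ preserves composition multiplicities of non-annihilated simples and sends standards to standards. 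Your route composes two quotient functors into one and appeals to \cite{BGS}/\cite{Backelin} directly; the paper's route is slightly more modular and keeps the Lie-theoretic input confined to the black box \cite[Theorem~4.9]{ES3}. Both buy exactly the same conclusion with no substantive extra work on either side.
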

\begin{proof}
Denote by $\widetilde{\Delta}(\la)$ the standard module in $\VWdcycl-\operatorname{gmod}$ that is sent to $\widehat{\Delta}(\la)$ via the quotient functor $\mathcal{Q}$ to 
$\ff \VWdcycl \ff -\operatorname{gmod}$ and analogously $\widetilde{L}(\mu)$ the simple module in $\VWdcycl-\operatorname{gmod}$. Then the following equality holds
$$
n_{\lambda,\mu,i} \overset{(a)}{=} \left[\widetilde{\Delta}(\la)\, :\, \widetilde{L}(\mu)<i>\right] \overset{(b)}{=} \left[\mathcal{Q}\widetilde{\Delta}(\la)\, :\, \mathcal{Q}\widetilde{L}(\mu)<i>\right] \overset{(c)}{=} \left[\widehat{\Delta}(\la)\, :\, \widehat{L}(\mu)<i>\right],
$$
where $(a)$ is due to \cite[Theorem 4.9]{ES3}, $(b)$ is due to \cite[p.136 (4)(vii)]{Donkin}, and $(c)$ is due to \cite[p.136 (4)(iv)]{Donkin} for the simple module and \cite[Prop. 4.3]{Koenig-Xi} for the standard module.
\end{proof}

\section{Well-definedness of $\Phi_\delta$} \label{sec:proof_thm:main}

In this section we establish the remaining part of Theorem \ref{thm:main}, namely the map $\Phi_\delta$ being well-defined. In other words, we have to verify that the $\tilde{s}_i$ and $\tilde{e}_i$ satisfy the Brauer relations. We will commence with a few lemmas that will help in the calculations, allowing us to simplify various expressions.

\begin{lemma}
It holds $e_k \eta_{k+1} = e_k \eta_k$ for all $k$, hence $e_k \mathbf{f}_{k+1} = e_k \mathbf{f}_k$.
\end{lemma}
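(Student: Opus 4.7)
The strategy is to derive the first equality from the defining VW-relations and then deduce the second as a quick corollary.

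The key algebraic input is $(y_k + y_{k+1}) e_k = 0$ from (VW.8a), which gives $e_k y_{k+1} = -e_k y_k$. Combined with the commutativity $y_k y_{k+1} = y_{k+1} y_k$ from (VW.5c), an easy induction shows $e_k y_{k+1}^n = (-1)^n e_k y_k^n$ for all $n \geq 0$, so that $e_k\, p(y_{k+1}) = e_k\, p(-y_k)$ for every polynomial $p \in \mathbb{C}[x]$. Since $\VWdcycl$ is finite-dimensional, the idempotent $\eta_{k+1}$ can be written as $p(y_{k+1})$ for an interpolating polynomial $p$ vanishing to sufficiently high order at $\beta$ and taking the value $1$ at the remaining generalized eigenvalues of $y_{k+1}$. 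Applying the previous identity yields $e_k \eta_{k+1} = e_k\, p(-y_k)$, where $p(-y_k)$ is the idempotent projecting off the $y_k = -\beta$ generalized eigenspace.

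Matching $e_k \eta_{k+1}$ against $e_k \eta_k$ reduces the first equality to the identity $e_k P_\beta^{(y_k)} = e_k P_{-\beta}^{(y_k)}$, where $P_\lambda^{(y_k)}$ denotes the projection onto the generalized eigenspace of $y_k$ with eigenvalue $\lambda$. To verify this, I would pass to the faithful module $\MdV$ via Theorem \ref{iso} and work one generalized eigenspace $M_\mathbf{i}$ at a time. By Lemma \ref{lem:eigenvalue_and_e}, $e_k$ vanishes on $M_\mathbf{i}$ unless $\mathbf{i}_k + \mathbf{i}_{k+1} = 0$, so the only configurations that could distinguish the two projections are $(\mathbf{i}_k, \mathbf{i}_{k+1}) \in \{(\beta, -\beta), (-\beta, \beta)\}$. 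Using the up-down bitableau description from Proposition \ref{surjectivity}, these correspond to adding or removing the upper diagonal box of content zero at positions $k, k+1$, and the main obstacle is to check that $e_k$ produces matching output on these two loci, which requires a careful tracking of the Verma filtration sections together with the Lie-theoretic structure of $\MdV$.

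The second assertion $e_k \mathbf{f}_{k+1} = e_k \mathbf{f}_k$ then follows at once: right-multiplying the first equality by $\mathbf{f}_k = \eta_1 \cdots \eta_k$ and using that the idempotents $\eta_j$ mutually commute (as polynomials in the pairwise commuting family $\{y_j\}$) together with $\eta_k \mathbf{f}_k = \mathbf{f}_k$ (which holds because $\eta_k^2 = \eta_k$ is already a factor of $\mathbf{f}_k$), one gets $e_k \mathbf{f}_k \eta_{k+1} = e_k \mathbf{f}_k$, i.e., $e_k \mathbf{f}_{k+1} = e_k \mathbf{f}_k$.
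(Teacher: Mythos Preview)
Your polynomial-calculus reduction is correct: from $e_k(y_k+y_{k+1})=0$ one does get $e_k\,p(y_{k+1})=e_k\,p(-y_k)$ for every polynomial $p$, and with a suitably chosen interpolant this reduces the first claim to $e_k P_\beta^{(y_k)} = e_k P_{-\beta}^{(y_k)}$. But that is exactly where the content lies, and you do not prove it. Your sketch (``careful tracking of the Verma filtration sections together with the Lie-theoretic structure of $\MdV$'') is not an argument: you give no reason why $e_k$ should behave identically on the two loci $(\mathbf{i}_k,\mathbf{i}_{k+1})=(\beta,-\beta)$ and $(-\beta,\beta)$, and nothing in the VW-relations or in Lemma~\ref{lem:eigenvalue_and_e} forces this. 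Since your deduction of $e_k\ff_{k+1}=e_k\ff_k$ goes through the first identity, it inherits this gap.

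The paper's route avoids this detour entirely. It works on the regular representation, decomposed into generalized eigenspaces $M_{\mathbf{i}}$; by Lemma~\ref{lem:eigenvalue_and_e}, $e_k$ already vanishes on $M_{\mathbf{i}}$ unless $\mathbf{i}_k=-\mathbf{i}_{k+1}$, and on such an eigenspace $\mathbf{i}_k$ is small if and only if $\mathbf{i}_{k+1}=-\mathbf{i}_k$ is small. That one-line symmetry observation (``small'' is preserved under negation) is the whole proof, and it directly gives the equality $e_k\ff_{k+1}=e_k\ff_k$ that is actually used later, since $\ff_k$ is precisely the projection onto the locus where $y_1,\dots,y_k$ all have small eigenvalue. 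No polynomial interpolation and no comparison of the two problematic $\pm\beta$ loci is needed. Your approach trades a trivial symmetry argument for a hard (and here unfinished) module-theoretic computation.
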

\begin{proof}
Let $\VWdcycl$ act on itself by the regular representation. By Lemma \ref{lem:eigenvalue_and_e} it follows that if either side of the equation acts non-trivially then the eigenvalues of $y_k$ and $y_{k+1}$ are opposite. Hence if one is small so is the other.
\end{proof}

We often need to simplify expressions involving fractions, such as in the definitions of $\tilde{s}_k$ and $\tilde{e}_k$. The following proposition collects a few useful formulas for this.  

\begin{prop} \label{prop:reduce_fracs}
In $\VWdcycl$ the following equalities hold for $1 \leq k \leq d-1$:
$$\begin{array}{rccrc}
i) & e_k \frac{1}{b_k} s_k \ff= \frac{1}{2\beta}e_k \frac{1}{b_k}\ff, & \qquad & iii) & e_k \frac{1}{b_k} e_k \ff = (1 + \frac{1}{2\beta})e_k \ff,\\
ii) & \ff s_k \frac{1}{b_k} e_k \ff= \frac{1}{2\beta} \frac{1}{b_k} \ff e_k \ff.
\end{array}$$
\end{prop}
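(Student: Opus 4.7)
The three identities share a common backbone: part (iii) is the central computation and the genuinely nontrivial one, while (i) and (ii) follow by commuting $s_k$ past a factor of $\tfrac{1}{b_k}$ (or $\tfrac{1}{c_{k+1}}$) via Lemma \ref{lem:commute_b}, invoking (iii) to collapse the resulting double‐$e_k$ expression, and killing the leftover term using $e_k(y_k+y_{k+1})=0$ or $(y_k+y_{k+1})e_k=0$.

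\textbf{Step 1 (the hard step): part (iii).} The key input is the partial trace identity
\[
e_k\,y_k^{m}\,e_k\,\ff\;=\;\omega_{m}\,e_k\,\ff\qquad(m\ge 0),
\]
which for $k=1$ is Relation~(VW.\ref{4}), and for general $k$ is propagated by the VW-relations. Since $\ff$ projects $y_k$ away from the eigenvalue $\beta$, Lemma~\ref{prop:2} shows that $b_k=\beta+y_k$ is invertible in $\ff\VWdcycl\ff$; more precisely, on each generalized $y_k$-eigenspace $M_{\mathbf{i}}$ we have $y_k=\mathbf{i}_k+n$ with $n$ nilpotent, so $\tfrac{1}{b_k}$ acts as a \emph{finite} polynomial in $y_k-\mathbf{i}_k$ via the formalism of Section~\ref{sec:invers-square-roots}. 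Applying the partial trace identity termwise and summing the (formally infinite, eigenspace-wise finite) series yields
\[
e_k\tfrac{1}{b_k}e_k\ff \;=\;\tfrac{1}{\beta}\,W\!\bigl(-\tfrac{1}{\beta}\bigr)\,e_k\ff,\qquad W(t)=\sum_{m\ge 0}\omega_m t^m.
\]
From the recursion in Definition~\ref{defalphabeta}, one computes $(1-\alpha t)(1-\beta t)\,W(t)=N-\tfrac{N}{2}t$, i.e. $W(t)=\tfrac{N(1-t/2)}{(1-\alpha t)(1-\beta t)}$, and using $\alpha+\beta=N/2$ one obtains $W(-1/\beta)=(2\beta+1)/2$, which gives the claimed factor $1+\tfrac{1}{2\beta}$.

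\textbf{Step 2: part (i).} Set $X=e_k\tfrac{1}{b_k}s_k\ff$. From (VW.\ref{8a}) one has $e_k b_k=e_k c_{k+1}$, hence $e_k\tfrac{1}{b_k}=e_k\tfrac{1}{c_{k+1}}$ after inverting. By Lemma~\ref{lem:commute_b}(3),
\[
\tfrac{1}{c_{k+1}}s_k \;=\; s_k\tfrac{1}{c_k}-\tfrac{1}{c_{k+1}}e_k\tfrac{1}{c_k}+\tfrac{1}{c_kc_{k+1}}.
\]
Multiplying on the left by $e_k$, using $e_k s_k=e_k$, and applying (iii) to simplify $e_k\tfrac{1}{c_{k+1}}e_k=e_k\tfrac{1}{b_k}e_k=(1+\tfrac{1}{2\beta})e_k$, I obtain
\[
X \;=\;-\tfrac{1}{2\beta}\,e_k\tfrac{1}{c_k}\ff \;+\; e_k\tfrac{1}{c_kc_{k+1}}\ff.
\]
The desired right-hand side equals $\tfrac{1}{2\beta}e_k\tfrac{1}{c_{k+1}}\ff$, so the difference is $\tfrac{1}{2\beta}\,e_k\tfrac{y_k+y_{k+1}}{c_kc_{k+1}}\ff$, which vanishes by (VW.\ref{8a}) since $e_k(y_k+y_{k+1})=0$ and $(y_k+y_{k+1})$ commutes with the polynomial $\tfrac{1}{c_kc_{k+1}}$.

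\textbf{Step 3: part (ii).} By (VW.\ref{8b}) one has $b_k e_k=c_{k+1}e_k$, so $\tfrac{1}{b_k}e_k=\tfrac{1}{c_{k+1}}e_k$. Using the mirror version of the commutation from Lemma~\ref{lem:commute_b}(4), namely
\[
s_k\tfrac{1}{c_{k+1}}\;=\;\tfrac{1}{c_k}s_k-\tfrac{1}{c_k}e_k\tfrac{1}{c_{k+1}}+\tfrac{1}{c_kc_{k+1}},
\]
multiplying on the right by $e_k$ and using $s_k e_k=e_k$ together with (iii), I get $\ff s_k\tfrac{1}{b_k}e_k\ff=-\tfrac{1}{2\beta}\tfrac{1}{c_k}\ff e_k\ff+\tfrac{1}{c_kc_{k+1}}\ff e_k\ff$, and the residual $\tfrac{1}{2\beta}\tfrac{y_k+y_{k+1}}{c_kc_{k+1}}\ff e_k\ff$ again vanishes, this time by (VW.\ref{8b}).

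\textbf{Main obstacle.} The delicate step is (iii): in the semisimple case one could simply evaluate Nazarov's series $W_k(u)$ at $u=-\beta$ as mentioned in Remark~\ref{theremark}, but in the presence of nontrivial generalized eigenspaces this is not legitimate. The plan circumvents this by using the finite-polynomial expansion of $\tfrac{1}{b_k}$ on each generalized eigenspace (Section~\ref{sec:invers-square-roots}) so that the generating-function computation carries over rigorously. A secondary bookkeeping point is verifying $e_k y_k^m e_k\ff=\omega_m e_k\ff$ for $k>1$ using only (VW.\ref{4}) together with the remaining relations.
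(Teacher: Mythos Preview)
Your reductions of (i) and (ii) to (iii) via Lemma~\ref{lem:commute_b} and the relations (VW.\ref{8a}),~(VW.\ref{8b}) are correct and match the paper's approach (the paper uses the $b$-variables rather than your $c$-variables, but the argument is the same).

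The gap is in Step~1. Your ``key input'' $e_k\,y_k^{m}\,e_k\,\ff=\omega_m\,e_k\,\ff$ for $k>1$ is \emph{not} what (VW.\ref{4}) and the remaining relations give you: in the affine VW-algebra one has a series $W_k(u)$ with $e_k\,\tfrac{1}{u-y_k}\,e_k=\tfrac{W_k(u)}{u}\,e_k$, but $W_k$ for $k>1$ is determined from $W_1$ by a genuine recursion, not by $W_k=W_1$. So the ``secondary bookkeeping point'' you defer is in fact the whole problem. Moreover, even granting such an identity, your passage from it to $e_k\tfrac{1}{b_k}e_k\ff=\tfrac{1}{\beta}W(-1/\beta)\,e_k\ff$ is exactly the maneuver Remark~\ref{theremark} flags as illegitimate in the non-semisimple case: the polynomial realizing $\tfrac{1}{b_k}$ depends on the generalized eigenspace, and $e_k$ mixes those eigenspaces, so a termwise use of a partial-trace identity does not assemble into the global statement.

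The paper avoids both issues by proving (iii) by induction on $k$. The base case $k=1$ is a direct computation using that $y_1$ has exactly the two eigenvalues $\alpha,\beta$ in $\VWdcycl$ (so one can write $\tfrac{1}{b_1}$ explicitly on the two eigenspaces and apply (VW.\ref{3}),~(VW.\ref{4})). For the inductive step one writes $e_{k+1}\tfrac{1}{b_{k+1}}e_{k+1}\ff=e_{k+1}s_k\bigl(s_k\tfrac{1}{b_{k+1}}\bigr)e_{k+1}\ff$, expands with Lemma~\ref{lem:commute_b}, uses (VW.\ref{6c}),~(VW.\ref{6d}) to reduce the appearing $e_{k+1}e_k\ldots e_k e_{k+1}$ pieces to the inductive hypothesis $e_k\tfrac{1}{b_k}e_k\ff=(1+\tfrac{1}{2\beta})e_k\ff$, and checks that the remaining rational function in $b_k,c_k$ vanishes. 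This is precisely where the work that your ``propagation'' sentence hides is done.
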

\begin{proof}
Note first that $\frac{1}{b_k}s_k\ff$ and $\frac{1}{b_k}e_k\ff$ are well-defined. To see this we look at the action of these elements on the module $M=\MdV$. For both, $s_k\ff$ and $e_k\ff$ it holds that their image is contained in those subspaces $M_{\textbf{i}}$ such that $i_k \neq - \beta$, due to Lemma \ref{lem:eigenvalue_and_e} and Lemma \ref{lem:eigenvalue_and_s} in conjunction with the diagram calculus, hence $\frac{1}{b_k}$ is defined as an endomorphism of these images.

Let us first assume $iii)$ is proven already. To verify $i)$ we calculate
\begin{eqnarray*}
e_k \frac{1}{b_k}s_k \ff &\overset{(a)}{=}& e_k s_k \frac{1}{b_{k+1}}\ff - e_k \frac{1}{b_k}e_k \frac{1}{b_{k+1}}\ff + e_k \frac{1}{b_k b_{k+1}}\ff\\
&\overset{(b)}{=}& e_k s_k \frac{1}{b_{k+1}}\ff - (1 + \frac{1}{2\beta}) e_k \frac{1}{b_{k+1}}\ff + e_k \frac{1}{b_k b_{k+1}}\ff\\
&=& e_k \left( \frac{2\beta - b_k}{2\beta b_k b_{k+2}} \right) = e_k \left( \frac{c_k}{2\beta b_k b_{k+1}} \right) 
= e_k \left( \frac{b_{k+1}}{2\beta b_k b_{k+1}} \right) \\
&=& \frac{1}{2\beta} e_k \frac{1}{b_k}\ff
\end{eqnarray*}
where equality $(a)$ holds by Lemma \ref{lem:commute_b} and equality $(b)$ is valid thanks to part $iii)$ of this proposition.

Formula $ii)$ is shown analogously, but note that since $\frac{1}{b_{k+1}}e_k$ is in general not defined we have to multiply the whole equation by $\ff$ from the left to make it well-defined.

Finally let us consider the formula $iii)$ which we will prove by induction on $k$. If $k=1$ then  note that by Definition \ref{defalphabeta} the element $y_1$ has exactly two eigenvalues, namely $\alpha$ and $\beta$ as in \eqref{ab}, with the projections $\frac{y_1-\beta}{\alpha-\beta}$ respectively $\frac{y_1-\alpha}{\beta-\alpha}$ onto the eigenspaces. Then we obtain 
\begin{eqnarray*}
&&e_1 \frac{1}{b_1} e_1\ff\\
& =& e_1\left (\frac{1}{\alpha+\beta}\frac{y_1-\beta}{\alpha-\beta}+ \frac{1}{2\beta}\frac{y_1-\alpha}{\beta-\alpha}\right)e_1\ff
\quad=\quad \frac{1}{\alpha-\beta}e_1\left(\frac{y_1-\beta}{\alpha+\beta}+\frac{\alpha-y_1}{2\beta}\right)e_1 \ff\\
&=&
e_1\frac{1}{(\alpha+\beta)2\beta}\left(-y_1+\alpha+2\beta\right)e_1 \ff
\quad\stackrel{\eqref{ab}}{=}
\quad\frac{1}{N\beta}\left(-e_1y_1e_1+(\alpha+2\beta) e_1^2\right)\ff\\
&=&\frac{1}{2N\beta}\left(N(1-N)+N^2+2\beta\right)\ff
=\left(1 + \frac{1}{2\beta}\right)e_1 \ff. 
\end{eqnarray*}
Now assume the formula holds for $k$ and, by applying Lemma \ref{lem:commute_b} repeatedly, we obtain
\begin{eqnarray} \label{puh}
&& e_{k+1} \frac{1}{b_{k+1}} e_{k+1}\ff\quad=\quad e_{k+1}s_{k}(s_{k} \frac{1}{b_{k+1}}) e_{k+1}\ff\nonumber\\
 &{=}& 
    e_{k+1}s_{k}\left(\frac{1}{b_k}\right)s_{k}e_{k+1}\ff\
 + e_{k+1}s_k\left(\frac{1}{b_k}e_k\frac{1}{b_{k+1}}\right)e_{k+1}\ff
 -
 e_{k+1}s_k\left(\frac{1}{b_k}\frac{1}{b_{k+1}}\right)e_{k+1}\ff
\end{eqnarray}
by Lemma \ref{lem:commute_b}. Now the first summand in \eqref{puh} equals, by (VW.\ref{6c}),
\begin{eqnarray*}
e_{k+1}e_{k}s_{k+1}\left(\frac{1}{b_k}\right)s_{k+1}e_{k}e_{k+1}\ff
&=&e_{k+1}e_{k}\frac{1}{b_k}e_{k}e_{k+1}\ff
=\left(1+\frac{1}{2\beta}\right)e_{k+1}e_ke_{k+1}\ff\\
&=&\left(1+\frac{1}{2\beta}\right)e_{k+1}\ff
\end{eqnarray*}
by induction, whereas the second summand equals 
\begin{eqnarray*}
&&e_{k+1}s_k\frac{1}{b_k}e_k\frac{1}{b_{k+1}}e_{k+1}\ff\\
&=&e_{k+1}\left(s_k\frac{1}{b_k}\right)e_ke_{k+1}\frac{1}{c_k}\ff\\
&=&e_{k+1}\left(\frac{1}{b_{k+1}}s_k\right)e_ke_{k+1}\frac{1}{c_k}\ff
-e_{k+1}\left(\frac{1}{b_{k+1}}e_k\frac{1}{b_k}\right) e_ke_{k+1}\frac{1}{c_k}\ff
+e_{k+1}\left(\frac{1}{b_{k+1}}\frac{1}{b_k}\right) e_ke_{k+1}\frac{1}{c_k}\ff\\
&=&
\frac{1}{c_k}e_{k+1}e_ke_{k+1}\frac{1}{c_{k}}\ff
-\left(1+\frac{1}{2\beta}\right)\frac{1}{c_{k}} e_{k+1}e_ke_{k+1}\frac{1}{c_{k}}\ff
+\frac{1}{b_{k}c_k} e_{k+1}e_ke_{k+1}\frac{1}{c_k}\ff\\
&=&-\frac{1}{2\beta} \frac{1}{c_k^2}e_{k+1}\ff + \frac{1}{b_kc_k^2}e_{k+1}\ff\\
\end{eqnarray*}
by (VW.\ref{8a}) and (VW.\ref{5a}), Lemma \ref{lem:commute_b} and induction hypothesis. 
Finally the third summand in \eqref{puh} equals 
\begin{eqnarray*}
&&
-e_{k+1}\left(s_k\frac{1}{b_{k+1}}\right)e_{k+1}\frac{1}{b_k}\ff \\
&=&
-e_{k+1}\left(\frac{1}{b_{k}s_k}\right)e_{k+1}\frac{1}{b_k}\ff
-e_{k+1}\left(\frac{1}{b_{k}}e_k\frac{1}{b_{k+1}}\right)e_{k+1}\frac{1}{b_k}\ff
+e_{k+1}\left(\frac{1}{b_kb_{k+1}}\right)e_{k+1}\frac{1}{b_{k}}\ff\\
&=&-\frac{1}{b_{k}^2}e_{k+1}\ff-\frac{1}{b_{k}^2c_k} e_{k+1}\ff+
\frac{1}{b_{k}^2} e_{k+1} \frac{1}{b_{k+1}} e_{k+1}\ff.
\end{eqnarray*}
Hence altogether we obtain 
\begin{eqnarray*}
&&\left(1-\frac{1}{b_{k}^2}\right) e_{k+1} \frac{1}{b_{k+1}} e_{k+1} \ff =\left(1+\frac{1}{2\beta}\right)e_{k+1}\ff + \left( -\frac{1}{2\beta} \frac{1}{c_k^2} + \frac{1}{b_kc_k^2}-\frac{1}{b_{k}^2}-\frac{1}{b_{k}^2c_k}\right) e_{k+1}\ff\\
&=&\left(1-\frac{1}{b_{k}^2}\right)\left(1+\frac{1}{2\beta}\right)e_{k+1}\ff
 + \left(\frac{1}{2\beta}\frac{1}{b_{k}^2} -\frac{1}{2\beta} \frac{1}{c_k^2} + \frac{1}{b_kc_k^2}-\frac{1}{b_{k}^2c_k} \right) e_{k+1}\ff.
\end{eqnarray*}
Now one easily checks that the last coefficient in front of the final $e_{k+1}\ff$ is zero and since  $\left(1-\frac{1}{b_{k}^2}\right)$ is invertible on the image of $e_{k+1}\ff$ we obtain
\begin{eqnarray*}
  e_{k+1} \frac{1}{b_{k+1}} e_{k+1} \ff&=&\left(1+\frac{1}{2\beta}\right)e_{k+1} \ff
\end{eqnarray*}
which finishes the proof.
\end{proof}

\subsection{The key relation $\ff \VWdcycl \ff$}
The following is the most crucial point of the proof (see also Remark~\ref{theremark}):

\begin{lemma} \label{lem:e2=deltae}
We have $\tilde{e}_k^2 = \delta \tilde{e}_{k}$ for $1 \leq k \leq d-1$.
\end{lemma}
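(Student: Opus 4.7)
The plan is to unfold $\tilde{e}_k^2 = Q_k e_k Q_k^2 e_k Q_k = Q_k e_k \tfrac{b_{k+1}}{b_k}\mathbf{f}\, e_k Q_k$ using $Q_k^2 = \tfrac{b_{k+1}}{b_k}\mathbf{f}$, and to show that the central factor satisfies $e_k \tfrac{b_{k+1}}{b_k}\mathbf{f}\, e_k Q_k = \delta\, e_k Q_k$ exactly as sketched in Remark~\ref{theremark}. The first input is (VW.\ref{8a}): since $e_ky_{k+1}=-e_ky_k$, one has $e_kb_{k+1}=e_kc_k=e_k(2\beta-b_k)$, which yields the decomposition
\[
e_k\tfrac{b_{k+1}}{b_k}\mathbf{f}\,e_k Q_k \;=\; 2\beta\, e_k\tfrac{1}{b_k}\mathbf{f}\, e_k Q_k \;-\; e_k\mathbf{f}\, e_k Q_k.
\]

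The hard part is that $\mathbf{f}$ does not commute with $e_k$, so $\mathbf{f} e_kQ_k\ne e_kQ_k$ in general. The key observation I would use is that Proposition~\ref{prop:kill_f}, parts 1.iii) and 1.iv), give $c_k\mathbf{f}e_k\mathbf{f}=c_ke_k\mathbf{f}$ and $b_{k+1}\mathbf{f}e_k\mathbf{f}=b_{k+1}e_k\mathbf{f}$. Right-multiplying by $Q_k$ and using $\mathbf{f}Q_k=Q_k$ shows that the ``defect'' $(1-\mathbf{f})e_kQ_k$ is annihilated from the left by both $c_k$ and $b_{k+1}$; hence its image lies in the exact eigenspace where $y_k=\beta$ and $y_{k+1}=-\beta$. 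On this subspace $b_k$ acts as the scalar $2\beta$, so
\[
\tfrac{1}{b_k}(1-\mathbf{f})e_kQ_k \;=\; \tfrac{1}{2\beta}(1-\mathbf{f})e_kQ_k.
\]

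With this at hand I would split $\mathbf{f}e_kQ_k = e_kQ_k - (1-\mathbf{f})e_kQ_k$ and compute
\[
e_k\tfrac{1}{b_k}\mathbf{f}\,e_kQ_k \;=\; e_k\tfrac{1}{b_k}e_kQ_k \;-\; \tfrac{1}{2\beta}\,e_k(1-\mathbf{f})e_kQ_k.
\]
The first summand is handled by Proposition~\ref{prop:reduce_fracs}(iii) right-multiplied by $Q_k$ (again using $\mathbf{f}Q_k=Q_k$), giving $(1+\tfrac{1}{2\beta})e_kQ_k$. The second is expanded via $e_k^2=Ne_k$ from (VW.\ref{3}): $e_k(1-\mathbf{f})e_kQ_k = Ne_kQ_k - e_k\mathbf{f}e_kQ_k$.

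Assembling, $2\beta\, e_k\tfrac{1}{b_k}\mathbf{f}\,e_kQ_k = (2\beta+1)e_kQ_k - Ne_kQ_k + e_k\mathbf{f}e_kQ_k$, and the stray term $e_k\mathbf{f}e_kQ_k$ cancels precisely against the second summand in the first display. Using $2\beta=\delta+N-1$, this leaves $e_k\tfrac{b_{k+1}}{b_k}\mathbf{f}\,e_kQ_k=(2\beta+1-N)e_kQ_k=\delta\,e_kQ_k$. Left-multiplying by $Q_k$ concludes $\tilde{e}_k^2 = \delta\tilde{e}_k$. The main obstacle throughout is the careful analysis of the defect $(1-\mathbf{f})e_kQ_k$, which is exactly the source of the ``correction terms'' that distinguish the non-semisimple setting from the naive semisimple one referenced in Remark~\ref{theremark}.
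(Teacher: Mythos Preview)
Your proof is correct and follows essentially the same strategy as the paper's: both use $e_kb_{k+1}=e_kc_k$ (you via (VW.\ref{8a}), the paper via (VW.\ref{8b}) on the right-hand $e_k$), Proposition~\ref{prop:kill_f} to dispose of the inner $\mathbf{f}$, Proposition~\ref{prop:reduce_fracs}(iii), and $e_k^2=Ne_k$. The only difference is the order of operations: the paper first removes the inner $\mathbf{f}$ via $c_k\mathbf{f}e_k\mathbf{f}=c_ke_k\mathbf{f}$ and \emph{then} expands $c_k=2\beta-b_k$, which avoids your defect analysis of $(1-\mathbf{f})e_kQ_k$ entirely; you expand first and then have to work harder, but the argument is sound and the cancellation you observe is exactly what the paper's ordering builds in from the start.
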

\begin{proof}
We compute
\begin{equation} \label{eqn:e2=e}
\begin{aligned}
\tilde{e}_k^2 &= Q_k e_k \frac{b_{k+1}}{b_k} \ff e_k Q_k \overset{(a)}{=} Q_k e_k \frac{c_k}{b_k} \ff e_k Q_k \overset{(b)}{=} Q_k e_k \frac{c_k}{b_k} e_k Q_k \\
&\overset{(c)}{=} 2 \beta Q_k e_k \frac{1}{b_k}e_k Q_k - Q_k e_k^2 Q_k \overset{(d)}{=} (2\beta + 1) Q_k e_k Q_k - N Q_k e_k Q_k = \delta \tilde{e_k}.
\end{aligned}
\end{equation}
Where equality $(a)$ follows from Relation~(VW.\ref{8b}), equality $(b)$ holds by Proposition \ref{prop:kill_f}, equality $(c)$ just expands $c_k$ as $2\beta - b_k$ and equality $(d)$ is valid thanks to Proposition~\ref{prop:reduce_fracs}.
\end{proof}

\subsection{Symmetric group relations in $\ff \VWdcycl \ff$}
In this part we show that the $\tilde{s}_j$'s satisfy the defining relation of the symmetric group.

\begin{lemma} \label{lem:s2=f}
We have $\tilde{s}_k^2 = \mathbf{f}$ for $1 \leq k \leq d-1$.
\end{lemma}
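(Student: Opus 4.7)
The strategy is to first rewrite $\tilde{s}_k$ in the more symmetric form
\[
\tilde{s}_k \;=\; Q_k\Bigl(\tfrac{1}{b_{k+1}}\ff - s_k\Bigr)Q_k.
\]
This rewriting rests on the identity $\tfrac{1}{b_k}\ff = Q_k\cdot\tfrac{1}{b_{k+1}}\ff\cdot Q_k$, which I would verify by direct computation: since $Q_k = \sqrt{b_{k+1}/b_k}\,\ff$ and $\tfrac{1}{b_{k+1}}\ff$ both lie in the commutative subalgebra generated by $\mathbb{C}[y_1,\ldots,y_d]$ together with $\ff$, they commute, so $Q_k\cdot\tfrac{1}{b_{k+1}}\ff\cdot Q_k = \tfrac{1}{b_{k+1}}Q_k^2 = \tfrac{1}{b_{k+1}}\cdot\tfrac{b_{k+1}}{b_k}\ff = \tfrac{1}{b_k}\ff$.

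Next, squaring this form and moving $Q_k^2 = \tfrac{b_{k+1}}{b_k}\ff$ through to the middle gives $\tilde{s}_k^2 = Q_k\,v_k\,Q_k$ where
\[
v_k \;:=\; \Bigl(\tfrac{1}{b_{k+1}}\ff - s_k\Bigr)\,\tfrac{b_{k+1}}{b_k}\ff\,\Bigl(\tfrac{1}{b_{k+1}}\ff - s_k\Bigr).
\]
Since $Q_k$ is invertible inside $\ff\VWdcycl\ff$ with inverse $\sqrt{b_k/b_{k+1}}\,\ff$, proving $\tilde{s}_k^2 = \ff$ reduces to proving $v_k = \tfrac{b_k}{b_{k+1}}\ff$, because then $\tilde{s}_k^2 = Q_k \cdot\tfrac{b_k}{b_{k+1}}\ff\cdot Q_k = Q_k^2\cdot\tfrac{b_k}{b_{k+1}}\ff = \ff$. (This is the target: in the semisimple case one can verify directly on a $2$-dimensional eigenspace that $\tilde{s}_k$ acts as $-s_k$, confirming the reduction.)

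To establish $v_k = \tfrac{b_k}{b_{k+1}}\ff$, I would expand the four summands. The diagonal one collapses at once to $\tfrac{1}{b_kb_{k+1}}\ff$, while the two cross terms give $-\tfrac{1}{b_k}\ff s_k - s_k\tfrac{1}{b_k}\ff$. The essential work is in the remaining summand $s_k\tfrac{b_{k+1}}{b_k}\ff s_k$: first push $s_k$ past $b_{k+1}$ via Lemma~\ref{lem:commute_b}(2), then push the resulting $s_k$ past $\tfrac{1}{b_k}\ff$ via Lemma~\ref{lem:commute_b}(1), obtaining several terms involving $e_k$ in various positions and another factor of $s_k$. Using $s_k^2 = 1$ and the identities $\tfrac{1}{b_{k+1}}e_k\ff = \tfrac{1}{c_k}e_k\ff$ and $e_k\tfrac{1}{b_{k+1}}\ff = e_k\tfrac{1}{c_k}\ff$ (immediate from Relation~(VW.\ref{8b}) and (VW.\ref{8a}) respectively, via $b_{k+1}e_k = c_ke_k$), the $s_k$-containing remnants cancel exactly against the two cross-terms $-\tfrac{1}{b_k}\ff s_k - s_k\tfrac{1}{b_k}\ff$, while the $e_k$-containing terms cancel among themselves. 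The surviving pure polynomial terms in the $y$'s telescope to $\tfrac{b_k}{b_{k+1}}\ff$.

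The main obstacle is the bookkeeping in this final expansion: the fourth summand, after two applications of Lemma~\ref{lem:commute_b}, disassembles into roughly six pieces, and matching them against the cross terms requires careful use of the commutativity of $b_k, b_{k+1}, c_k$ (which all live in $\mathbb{C}[y]$) together with the non-commutation of these polynomials with $e_k$. One must also check at each step that every inverse appearing is well-defined on the image of $\ff$, i.e.\ that $b_k, b_{k+1}$ never have $0$ as an eigenvalue on $\ff$-components, which is guaranteed by the definition of the idempotent $\ff$.
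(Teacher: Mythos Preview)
Your symmetric rewriting $\tilde{s}_k = Q_k(\tfrac{1}{b_{k+1}}\ff - s_k)Q_k$ is correct and pleasant, and the reduction to showing $\ff v_k\ff = \tfrac{b_k}{b_{k+1}}\ff$ is exactly equivalent to what the paper does. However, your execution of the final expansion has a genuine gap.

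The problem is the intermediate $\ff$ coming from $Q_k^2 = \tfrac{b_{k+1}}{b_k}\ff$. After your two pushes (via Lemma~\ref{lem:commute_b}), the leading piece of the fourth summand is $\tfrac{b_k}{b_{k+1}}\ff s_k\ff s_k\ff$, and you then invoke $s_k^2=1$. But $\ff s_k\ff s_k\ff\neq\ff$ in general: the difference $\ff s_k(1-\ff)s_k\ff$ does not vanish, because $(1-\ff)s_k\ff$ lands in eigenspaces with $y_k=\beta$, $y_{k+1}=-\beta$, and a further $s_k$ followed by $\ff$ can return a nonzero small component. The paper handles this in its very first step by invoking Proposition~\ref{prop:kill_f} (specifically $b_{k+1}\ff s_k\ff = b_{k+1}s_k\ff$) to strip the middle $\ff$ \emph{before} any commutation, so that the honest relation $s_k^2=1$ applies. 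You never cite Proposition~\ref{prop:kill_f}, and without it your argument does not close.

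A second, smaller gap: your claim that ``the $e_k$-containing terms cancel among themselves'' using only $\tfrac{1}{b_{k+1}}e_k\ff=\tfrac{1}{c_k}e_k\ff$ is not correct. After the two pushes one is left with terms of the shape $e_k\tfrac{1}{b_k}s_k$ and, via further commutation, $e_k\tfrac{1}{b_k}e_k$. These do not cancel by (VW.\ref{8a})--(VW.\ref{8b}) alone; one needs the nontrivial identities $e_k\tfrac{1}{b_k}s_k\ff=\tfrac{1}{2\beta}e_k\tfrac{1}{b_k}\ff$ and $e_k\tfrac{1}{b_k}e_k\ff=(1+\tfrac{1}{2\beta})e_k\ff$ of Proposition~\ref{prop:reduce_fracs}, which the paper invokes explicitly (step~(e)). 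In fact the actual cancellation pattern mixes $s_k$- and $e_k$-terms: one $e_k$-term survives and only disappears after being combined with the remaining $s_k$-term via a final application of Lemma~\ref{lem:commute_b} (the paper's step~(f)). So your clean dichotomy does not reflect what actually happens.
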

\begin{proof}
We compute
\begin{align*}
\tilde{s}_k^2 =\,& Q_k \left( s_k \frac{b_{k+1}}{b_k}\ff s_k - s_k \frac{1}{b_k} - \frac{1}{b_k} \ff s_k \right) Q_k + \frac{1}{b_k^2}\ff \\
\overset{(a)}{=}&Q_k \left( s_k \frac{b_{k+1}}{b_k} s_k - s_k \frac{1}{b_k} - \frac{1}{b_k} s_k \right) Q_k + \frac{1}{b_k^2}\ff \\
\overset{(b)}{=}& Q_k \left( s_k b_{k+1} s_k \frac{1}{b_{k+1}} - s_k \frac{b_{k+1}}{b_k} e_k \frac{1}{b_{k+1}} - \frac{1}{b_k} s_k \right) Q_k + \frac{1}{b_k^2}\ff\\
\overset{(c)}{=}& Q_k \left( \frac{b_k}{b_{k+1}} + (s_k -  e_k) \frac{1}{b_{k+1}} - s_k \frac{b_{k+1}}{b_k} e_k \frac{1}{b_{k+1}} - \frac{1}{b_k} s_k \right) Q_k + \frac{1}{b_k^2}\ff\\
\overset{(d)}{=}&  Q_k \left( \frac{b_k}{b_{k+1}} + (s_k -  e_k) \frac{1}{b_{k+1}} - b_k s_k \frac{1}{b_k} e_k \frac{1}{b_{k+1}} + e_k \frac{1}{b_k} e_k \frac{1}{b_{k+1}} - \frac{1}{b_k} e_k \frac{1}{b_{k+1}} - \frac{1}{b_k} s_k \right) Q_k\\
& + \frac{1}{b_k^2}\ff\\ 
\overset{(e)}{=}&  Q_k \left( \frac{b_k}{b_{k+1}} + (s_k -  e_k) \frac{1}{b_{k+1}} - \frac{1}{2\beta} e_k \frac{1}{b_{k+1}} + (1+\frac{1}{2\beta}) e_k \frac{1}{b_{k+1}} - \frac{1}{b_k} e_k \frac{1}{b_{k+1}} - \frac{1}{b_k} s_k \right) Q_k\\
& + \frac{1}{b_k^2}\ff\\
\overset{(f)}{=}&  Q_k \left( \frac{b_k}{b_{k+1}} -  \frac{1}{b_kb_{k+1}} \right) \ff Q_k + \frac{1}{b_k^2}\ff = \ff\\
\end{align*}
where $(a)$ follows from Proposition \ref{prop:kill_f} and the fact that $\ff$ commutes with $\frac{1}{b_k}$ on the image of $s_k$, $(b)$ is due to Lemma \ref{lem:commute_b}, $(c)$ and $(d)$ are applications of Relation~(VW.\ref{7}), $(e)$ uses Proposition \ref{prop:reduce_fracs}, and finally $(f)$ uses again Lemma \ref{lem:commute_b}.
\end{proof}

Since $\ff s_i \ff$ commutes with $Q_j$ and $\frac{1}{b_{j}} \ff$ when $| i - j | > 1$ the following lemma holds.

\begin{lemma} \label{lem:sisj=sjsi}
We have $\tilde{s}_i \tilde{s}_j= \tilde{s}_j \tilde{s}_i$ for $1 \leq i,j \leq d-1$ with $|i-j| > 1$.
\end{lemma}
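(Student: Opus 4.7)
The plan is to observe that when $|i-j| > 1$, all the constituent pieces of $\tilde{s}_i$ and $\tilde{s}_j$ pairwise commute, so the expansion is manifestly symmetric in the two indices.

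First I would record the commutation relations implied by the disjointness $\{i,i+1\} \cap \{j,j+1\} = \emptyset$. By relations (VW.\ref{2a}), (VW.\ref{2c}) and (VW.\ref{5c}), the element $s_i$ commutes with $s_j$, with $y_j$ and $y_{j+1}$, and with every $y_k$. Consequently $s_i$ commutes with $b_j$, $b_{j+1}$, and with the idempotent $\ff$ (the latter lies in the commutative subalgebra $\mC[y_1,\ldots,y_d]$). Hence $s_i$ commutes with $\frac{b_{j+1}}{b_j}\ff$ and with $\frac{1}{b_j}\ff$, and symmetrically with $i$ and $j$ swapped.

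Next I would transfer these commutations to $Q_j$. Since $Q_j$ is a square root of the element $\frac{b_{j+1}}{b_j}\ff$ of the finite-dimensional algebra $\VWdcycl$, Lemma~\ref{prop:2} (and the construction recalled in Section~\ref{sec:invers-square-roots}) guarantees that $Q_j$ can be expressed as a polynomial in $\frac{b_{j+1}}{b_j}\ff$. Therefore anything that commutes with $\frac{b_{j+1}}{b_j}\ff$ also commutes with $Q_j$. In particular $Q_j$ commutes with $s_i$, with $Q_i$, with $\frac{1}{b_i}\ff$, and with itself under the $i\leftrightarrow j$ swap.

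Finally I would expand
\begin{equation*}
\tilde{s}_i\tilde{s}_j = \left(-Q_i s_i Q_i + \tfrac{1}{b_i}\ff\right)\left(-Q_j s_j Q_j + \tfrac{1}{b_j}\ff\right)
\end{equation*}
into four summands. Using the commutations just established together with $s_is_j=s_js_i$, each of the four summands is invariant under interchanging $i$ and $j$; for instance
\begin{equation*}
Q_i s_i Q_i Q_j s_j Q_j = Q_j Q_i s_i s_j Q_i Q_j = Q_j s_j Q_j Q_i s_i Q_i,
\end{equation*}
and analogously for the mixed terms and for $\tfrac{1}{b_i b_j}\ff$. Summing up yields $\tilde{s}_i\tilde{s}_j = \tilde{s}_j\tilde{s}_i$. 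There is no real obstacle here; the only point requiring care is the handling of the square roots, which is precisely what the formalism of Section~\ref{sec:invers-square-roots} is designed to provide.
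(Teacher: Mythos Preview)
Your overall strategy is the same as the paper's, but there is a genuine slip. You assert that ``$s_i$ commutes \ldots\ with every $y_k$'' and hence with the idempotent $\ff$. This is false: Relation~(VW.\ref{7}) says precisely that $s_i$ does \emph{not} commute with $y_i$ or $y_{i+1}$, and since $\ff=\eta_1\cdots\eta_d$ involves the projections $\eta_i,\eta_{i+1}$ (polynomials in $y_i,y_{i+1}$), the element $s_i$ need not commute with $\ff$. Indeed, Proposition~\ref{prop:kill_f} is devoted exactly to controlling the defect $(1-\ff)s_k\ff$, which can be nonzero. Consequently your step $s_iQ_j=Q_js_i$ is not justified, because $Q_j$ is a polynomial in $\tfrac{b_{j+1}}{b_j}\ff$ and thus carries a hidden dependence on all $y_k$ through $\ff$.

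The fix is exactly what the paper does: replace bare $s_i$ by $\ff s_i\ff$. Since $Q_i=Q_i\ff=\ff Q_i$, one has $Q_is_iQ_i=Q_i(\ff s_i\ff)Q_i$, so only $\ff s_i\ff$ ever appears. Now $\ff s_i\ff$ commutes with $b_j$, $b_{j+1}$ (as $j,j+1\notin\{i,i+1\}$) and trivially with $\ff$; hence it commutes with $\tfrac{b_{j+1}}{b_j}\ff$, with any polynomial thereof (in particular $Q_j$), and with $\tfrac{1}{b_j}\ff$. One also needs that $\ff s_i\ff$ and $\ff s_j\ff$ commute; this follows because $s_i$ does not alter the eigenvalues of $y_j,y_{j+1}$, so $\ff s_i(1-\ff)s_j\ff=0$ and thus $\ff s_i\ff s_j\ff=\ff s_is_j\ff=\ff s_js_i\ff=\ff s_j\ff s_i\ff$. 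With these commutations your four-term expansion goes through unchanged.
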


We verify now the braid relations, which is a surprisingly non-trivial task.

\begin{prop} \label{lem:si_braid}
The braid relation $\tilde{s}_i \tilde{s}_{i+1} \tilde{s}_i= \tilde{s}_{i+1} \tilde{s}_i \tilde{s}_{i+1}$ holds for $1 \leq i < d-1$.
\end{prop}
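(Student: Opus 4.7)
The plan is to reduce the asserted identity $\tilde{s}_i \tilde{s}_{i+1} \tilde{s}_i = \tilde{s}_{i+1} \tilde{s}_i \tilde{s}_{i+1}$ to the braid relation $s_i s_{i+1} s_i = s_{i+1} s_i s_{i+1}$ of the underlying VW-algebra, together with the mixed relations of (VW.\ref{6b})--(VW.\ref{6d}). First I would write $\tilde{s}_k = -A_k + B_k$ with $A_k := Q_k s_k Q_k$ and $B_k := \frac{1}{b_k}\ff$, and expand each triple product by distributivity into $2^3 = 8$ summands. The pure polynomial terms $B_i B_{i+1} B_i$ and $B_{i+1} B_i B_{i+1}$ agree tautologically, since the $B_j$'s lie in the commutative subalgebra generated by the $y_j$'s.

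For the remaining seven summands on each side, the natural strategy is to \emph{normal-order} the word: push all polynomial factors (the $Q_j$'s and $\frac{1}{b_j}\ff$'s) to the outside of any $s$-$e$ subword, invoking Lemma \ref{lem:commute_b} repeatedly. Each time a $b_j^{\pm 1}$ is commuted past an $s_k$, a correction term involving $e_k$ is produced, and these must be collected systematically. The leading contribution on each side, a product of three $s$'s flanked by polynomial factors, then matches via $s_i s_{i+1} s_i = s_{i+1} s_i s_{i+1}$, provided the surrounding polynomial factors coincide after normal-ordering; this comparison amounts to a rational identity in $y_i, y_{i+1}, y_{i+2}$ with denominators in the $b_j$'s, which one checks directly.

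The subleading contributions are more delicate. After normal-ordering, each side becomes a sum of words of the form $(\text{polynomial}) \cdot w \cdot (\text{polynomial})$, where $w$ has length at most three in $\{s_i, s_{i+1}, e_i, e_{i+1}\}$. To match them I would: (a) use the mixed braid relations (VW.\ref{6b})--(VW.\ref{6d}) and (VW.\ref{2b}) to bring words containing $e$'s into a canonical form; (b) apply Proposition \ref{prop:reduce_fracs} whenever an $e_k$ is sandwiched between two $\frac{1}{b_k}$'s, collapsing three-letter $e$-patterns to single $e_k$'s with an explicit scalar; and (c) use Proposition \ref{prop:kill_f} to insert or remove internal idempotents $\ff$ freely, converting expressions like $\ff s_i \ff s_{i+1} \ff$ into $\ff s_i s_{i+1} \ff$ and vice versa. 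A careful bookkeeping pairs the $e$-correction terms produced on the two sides.

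The main obstacle is precisely this explosion of $e$-correction terms; the computation ultimately reduces to a symmetric rational identity in $y_i, y_{i+1}, y_{i+2}$ with denominators among $b_j, c_j$ and their pairwise differences. A cleaner closing move, which I would likely adopt to finish, is to use Theorem \ref{iso} to view the identity inside $\END_\mg(\MdV)^{\op{opp}}$ and verify it generalized-weight-space by generalized-weight-space. Since we already know by Lemma \ref{lem:s2=f} that both sides square to $\ff$ and since on each $M_\mathbf{i}$ the $y_j$ act as scalar plus nilpotent, the identity reduces essentially to its semisimple shadow, which matches the classical braid relation for the analogous Young-seminormal form expressions from \cite{Nazarov}; a deformation/density argument then propagates equality from the semisimple locus to the full cyclotomic algebra.
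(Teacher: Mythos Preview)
Your overall plan---expand each side into eight summands and normal-order using Lemma~\ref{lem:commute_b}, Proposition~\ref{prop:kill_f}, and Proposition~\ref{prop:reduce_fracs}---is exactly the paper's approach. But there are two genuine problems.

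First, your claim that $B_i B_{i+1} B_i = B_{i+1} B_i B_{i+1}$ ``tautologically'' is false: these are $\frac{1}{b_i^2 b_{i+1}}\ff$ and $\frac{1}{b_i b_{i+1}^2}\ff$, which are different unless $y_i = y_{i+1}$. In the paper's computation these are the terms \eqref{eqn:lhs8} and \eqref{eqn:rhs8}, and they do \emph{not} cancel against each other; each is absorbed into a separate combination (\eqref{eqn:rhs8} into \eqref{eqn:14g+14h}, \eqref{eqn:lhs8} into \eqref{eqn:14f-13g-13h}). This miscount propagates through your bookkeeping sketch and means the ``symmetric rational identity'' you expect at the end is not the right one.

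Second, your proposed ``cleaner closing move''---verify on each $M_{\mathbf{i}}$ via the semisimple shadow and then invoke a deformation/density argument---is precisely what Remark~\ref{theremark} warns does not go through. Knowing an identity on proper eigenspaces (scalar action of the $y_j$) does not automatically yield it on generalized eigenspaces (scalar plus nilpotent), because the expressions here are not polynomial in the $y_j$: they involve inverses and square roots $Q_k$ built from the non-unique formalism of Section~\ref{sec:invers-square-roots}, and a nilpotent perturbation of the input changes higher-order Taylor coefficients that the semisimple check never sees. Nor is there an evident flat family over parameter space in which to run a Zariski-density argument, since the square-root choices are made algebra-by-algebra rather than varying algebraically. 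The paper therefore carries out the full explicit cancellation; if you want to finish, you must do the same, and the terms do not pair off as symmetrically as your outline suggests---the final vanishing comes from $e_k(y_k+y_{k+1})=0$ rather than from a rational identity alone.
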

\begin{proof}
We expand both sides of the equality below and compute\\
\noindent\begin{minipage}{.57\linewidth}
\begin{align}
& \tilde{s}_k \tilde{s}_{k+1} \tilde{s}_k = \nonumber \\
& -Q_k s_k Q_k Q_{k+1} s_{k+1} Q_{k+1} Q_k s_k Q_k \label{eqn:lhs1} \\
& + \frac{1}{b_k} \ff Q_{k+1} s_{k+1} Q_{k+1} Q_k s_k Q_k \label{eqn:lhs2}\\
& + Q_k s_k Q_k \frac{1}{b_{k+1}} \ff Q_k s_k Q_k \label{eqn:lhs3}\\
& + Q_k s_k Q_k Q_{k+1} s_{k+1} Q_{k+1} \frac{1}{b_k} \ff \label{eqn:lhs5}
\end{align}
\end{minipage}
\begin{minipage}{.40\linewidth}
\begin{align}
& \notag \\
& - \frac{1}{b_k b_{k+1}} \ff Q_k s_k Q_k  \label{eqn:lhs4} \\
& - \frac{1}{b_k} \ff Q_{k+1} s_{k+1} Q_{k+1} \frac{1}{b_k} \ff \label{eqn:lhs6}\\
& - Q_k s_k Q_k \frac{1}{b_k b_{k+1}} \ff \label{eqn:lhs7}\\
& + \frac{1}{b_k^2 b_{k+1}} \ff \label{eqn:lhs8}.
\end{align}
\end{minipage}\\ \noindent
and \\ \noindent
\begin{minipage}{.57\linewidth}
\begin{align}
&\tilde{s}_{k+1} \tilde{s}_{k} \tilde{s}_{k+1} = \nonumber\\
&  -Q_{k+1} s_{k+1} Q_{k+1} Q_{k} s_{k} Q_{k} Q_{k+1} s_{k+1} Q_{k+1} \label{eqn:rhs1} \\
& + \frac{1}{b_{k+1}} \ff Q_{k} s_{k} Q_{k} Q_{k+1} s_{k+1} Q_{k+1} \label{eqn:rhs2}\\
& + Q_{k+1} s_{k+1} Q_{k+1} \frac{1}{b_{k}} \ff Q_{k+1} s_{k+1} Q_{k+1} \label{eqn:rhs3}\\
& + Q_{k+1} s_{k+1} Q_{k+1} Q_{k} s_{k} Q_{k} \frac{1}{b_{k+1}} \ff \label{eqn:rhs5}
\end{align}
\end{minipage}
\begin{minipage}{.40\linewidth}
\begin{align}
& \nonumber \\
& - \frac{1}{b_k b_{k+1}} \ff Q_{k+1} s_{k+1} Q_{k+1}  \label{eqn:rhs4}\\
& - \frac{1}{b_{k+1}} \ff Q_{k} s_{k} Q_{k} \frac{1}{b_{k+1}} \ff \label{eqn:rhs6}\\
& - Q_{k+1} s_{k+1} Q_{k+1} \frac{1}{b_{k} b_{k+1}} \ff \label{eqn:rhs7}\\
&  + \frac{1}{b_k b_{k+1}^2} \ff \label{eqn:rhs8}.
\end{align}
\end{minipage}\\ \noindent
For the following calculations we abbreviate $A:=Q_{k+1} \frac{1}{\sqrt{b_k}} \ff$. To improve readability we highlight those terms that are modified in each step, using either Lemma \ref{lem:commute_b} to move terms past $s_j$'s, Proposition \ref{prop:kill_f} to eliminate $\ff$'s or Proposition \ref{prop:reduce_fracs} to modify terms involving fractions.

We first simplify some parts of the right hand side of the braid.
\begin{align}\label{eqn:14g+14h}
& \text{(\ref{eqn:rhs7})}+\text{(\ref{eqn:rhs8})} = - A \highlightbox{s_{k+1} \frac{1}{b_{k+1}}} \ff A + \frac{1}{b_kb_{k+1}^2} \ff \notag \\
=& - A \left( \frac{1}{b_{k+2}}\ff s_{k+1} - \frac{1}{b_{k+2}} \ff e_{k+1} \frac{1}{b_{k+1}}+ \frac{1}{b_{k+1}b_{k+2}}\right) A + \frac{1}{b_k b_{k+1}^2} \ff\\
=& A \frac{1}{b_{k+2}} \ff e_{k+1} \frac{1}{b_{k+1}}A - A \frac{1}{b_{k+2}}\ff s_{k+1} A\notag 
\end{align}

\begin{align}\label{eqn:14c+14d}
& \text{(\ref{eqn:rhs3})}+\text{(\ref{eqn:rhs4})} = A \left( s_{k+1} \frac{b_{k+2}}{b_{k+1}} \highlightbox{\ff} s_{k+1} - \frac{1}{b_{k+1}}\highlightbox{\ff} s_{k+1} \right) A \notag \\
=& A \left( \highlightbox{s_{k+1}b_{k+2}} \frac{1}{b_{k+1}} s_{k+1} - \frac{1}{b_{k+1}}s_{k+1} \right) A \notag \\
=& A \left( \left(b_{k+1}s_{k+1} - e_{k+1} + 1 \right) \frac{1}{b_{k+1}} s_{k+1} - \frac{1}{b_{k+1}}s_{k+1} \right) A \notag \\
=& A \left( b_{k+1}s_{k+1} \highlightbox{\frac{1}{b_{k+1}} s_{k+1}} - \highlightbox{e_{k+1}\frac{1}{b_{k+1}} s_{k+1}} \right) A\notag \\
=& A \left( b_{k+1}s_{k+1} \left( s_{k+1} \frac{1}{b_{k+2}} - \frac{1}{b_{k+1}} e_{k+1} \frac{1}{b_{k+2}} + \frac{1}{b_{k+1}b_{k+2}} \right)- \frac{1}{2\beta} e_{k+1}\frac{1}{b_{k+1}} \right) A\notag \\
=& A \left( \frac{b_{k+1}}{b_{k+2}} - b_{k+1}\highlightbox{s_{k+1}\frac{1}{b_{k+1}} e_{k+1}} \frac{1}{b_{k+2}} + b_{k+1}s_{k+1}\frac{1}{b_{k+1}b_{k+2}} - \frac{1}{2\beta} e_{k+1}\frac{1}{b_{k+1}} \right) A\\
=& A \left( \frac{b_{k+1}}{b_{k+2}} - \frac{1}{2\beta} e_{k+1} \frac{1}{b_{k+2}} + b_{k+1}s_{k+1}\frac{1}{b_{k+1}b_{k+2}} - \frac{1}{2\beta} e_{k+1}\frac{1}{b_{k+1}} \right) A\notag \\
=& A \left( \frac{b_{k+1}}{b_{k+2}} - e_{k+1} \frac{1}{b_{k+1}b_{k+2}} + \highlightbox{b_{k+1}s_{k+1}}\frac{1}{b_{k+1}b_{k+2}} \right) A\notag \\
=& A \left( \frac{b_{k+1}}{b_{k+2}} - e_{k+1} \frac{1}{b_{k+1}b_{k+2}} + \left(s_{k+1} b_{k+2} + e_{k+1} - 1 \right) \frac{1}{b_{k+1}b_{k+2}} \right) A\notag \\
=& A \left( \frac{b_{k+1}}{b_{k+2}} + s_{k+1} \frac{1}{b_{k+1}} - \frac{1}{b_{k+1}b_{k+2}} \right) A = \frac{1}{b_{k}} \ff  + A \highlightbox{s_{k+1} \frac{1}{b_{k+1}}} A - \frac{1}{b_{k}b_{k+1}^2} \ff\notag \\
=& \frac{1}{b_{k}} \ff  + A \left(\frac{1}{b_{k+2}} \ff s_{k+1} \right)A - A \left(\frac{1}{b_{k+2}}\ff e_{k+1} \frac{1}{b_{k+1}} \right)A\notag 
\end{align}

Adding \eqref{eqn:14g+14h} and \eqref{eqn:14c+14d} we obtain:
\begin{equation} \label{eqn:14c+14d+14g+14h}
\text{(\ref{eqn:rhs3})}+\text{(\ref{eqn:rhs4})} + \text{(\ref{eqn:rhs7})}+\text{(\ref{eqn:rhs8})}= \frac{1}{b_{k}} \ff
\end{equation}

On the left hand side we simplify the following.
\begin{align}\label{eqn:-13c-13d}
& -\text{(\ref{eqn:lhs3})}-\text{(\ref{eqn:lhs4})} = A \left( -\frac{b_{k+1}}{b_{k+2}} \highlightbox{\ff} s_k \frac{1}{b_k} \ff s_k b_{k+1} + \frac{1}{b_kb_{k+2}} \ff s_k b_{k+1} \right) A \notag \\
=& A \left( - \highlightbox{b_{k+1} s_k} \frac{1}{b_k} \ff s_k \frac{b_{k+1}}{b_{k+2}} + \frac{1}{b_k} \ff s_k \frac{b_{k+1}}{b_{k+2}} \right) A \notag \\
=& A \left( - \left( s_k b_k - e_k + 1 \right) \frac{1}{b_k} \ff s_k \frac{b_{k+1}}{b_{k+2}} + \frac{1}{b_k} \ff s_k \frac{b_{k+1}}{b_{k+2}} \right) A \\
=& A \left( - s_k \ff s_k \frac{b_{k+1}}{b_{k+2}} + e_k \frac{1}{b_k}\ff s_k \frac{b_{k+1}}{b_{k+2}} \right) A \notag \\
=& A \left( - \frac{b_{k+1}}{b_{k+2}} + s_k (1-\ff) s_k \frac{b_{k+1}}{b_{k+2}} + e_k \frac{1}{b_k} s_k \frac{b_{k+1}}{b_{k+2}} - e_k \frac{1}{b_k} (1-\ff) s_k \frac{b_{k+1}}{b_{k+2}}\right) A \notag \\
=& - \frac{1}{b_{k}} + A s_k (1-\ff) s_k \frac{b_{k+1}}{b_{k+2}}A + A \frac{1}{2\beta} e_k \frac{b_{k+1}}{b_k b_{k+2}} A - A e_k \frac{1}{b_k} (1-\ff) s_k \frac{b_{k+1}}{b_{k+2}}A \notag
\end{align}

Furthermore, we simplify the following.

\begin{equation} \label{eqn:14f-13g-13h}
\begin{aligned}
& \text{(\ref{eqn:rhs6})}-\text{(\ref{eqn:lhs7})}-\text{(\ref{eqn:lhs8})} = - \frac{1}{b_{k+1}} \ff Q_{k} s_{k} \frac{Q_{k}}{b_{k+1}}\ff + Q_k \highlightbox{s_k \frac{1}{b_k}} \frac{Q_{k}}{b_{k+1}} \ff - \frac{1}{b_k^2b_{k+1}}\ff \\
=& - \frac{1}{b_{k+1}} \ff Q_{k} s_{k} \frac{Q_{k}}{b_{k+1}}\ff + Q_k \left( \frac{1}{b_{k+1}}\ff s_k - \frac{1}{b_{k+1}}\ff e_{k} \frac{1}{b_k} + \frac{1}{b_k b_{k+1}} \right)\frac{Q_k}{b_{k+1}} \ff - \frac{1}{b_k^2b_{k+1}}\ff \\
=& Q_k \left( - \frac{1}{b_{k+1}}\ff e_{k} \frac{1}{b_k} + \frac{1}{b_k b_{k+1}} \right)\frac{Q_k}{b_{k+1}} \ff - \frac{1}{b_k^2b_{k+1}}\ff = - Q_k \frac{1}{b_{k+1}}\ff e_{k} \frac{Q_k}{b_k b_{k+1}} \ff\\
=& - A e_{k} \frac{1}{b_k b_{k+2}} A
\end{aligned}
\end{equation}

To summarize, we have:
\begin{equation} \label{eqn:rest1}
\begin{aligned}
& \text{(\ref{eqn:rhs3})}+\text{(\ref{eqn:rhs4})} + \text{(\ref{eqn:rhs7})}+\text{(\ref{eqn:rhs8})} -\text{(\ref{eqn:lhs3})}-\text{(\ref{eqn:lhs4})} + \text{(\ref{eqn:rhs6})}-\text{(\ref{eqn:lhs7})}-\text{(\ref{eqn:lhs8})}\\
=& A s_k (1-\ff) s_k \frac{b_{k+1}}{b_{k+2}}A + A \frac{1}{2\beta} e_k \frac{b_{k+1}}{b_k b_{k+2}} A - A e_k \frac{1}{b_k} (1-\ff) s_k \frac{b_{k+1}}{b_{k+2}}A \\
& - A e_{k} \frac{1}{b_k b_{k+2}} A
\end{aligned}
\end{equation}

We consider the following expressions
\begin{equation} \label{eqn:14b-13e-13f}
\begin{aligned}
\text{(\ref{eqn:rhs2})}-\text{(\ref{eqn:lhs5})}-\text{(\ref{eqn:lhs6})} =& A \left( s_k \highlightbox{\ff} s_{k+1} - b_{k+1} s_k \highlightbox{\ff} s_{k+1} \frac{1}{b_k} \highlightbox{\ff} + \frac{1}{b_k} \highlightbox{\ff} s_{k+1} \right) A \\
=& A \left( s_k s_{k+1} - \highlightbox{b_{k+1} s_k} s_{k+1} \frac{1}{b_k} + \frac{1}{b_k} s_{k+1} \right) A \\
=& A \left( s_k s_{k+1} - (s_k b_k - e_k + 1) s_{k+1} \frac{1}{b_k} + \frac{1}{b_k} s_{k+1} \right) A \\
=& A e_k s_{k+1} \frac{1}{b_k} A
\end{aligned}
\end{equation}
and
\begin{equation} \label{eqn:14e-13b}
\begin{aligned}
& \text{(\ref{eqn:rhs5})}-\text{(\ref{eqn:lhs2})} = Q_{k+1} s_{k+1} Q_{k+1} Q_k s_k Q_k \frac{1}{b_{k+1}} \ff - \frac{1}{b_k}\ff Q_{k+1} s_{k+1} Q_{k+1}Q_k s_k Q_k \\
=& A \left( s_{k+1} \highlightbox{\ff} s_k - s_{k+1} \frac{1}{b_k} \highlightbox{\ff} s_k b_{k+1}\right) A = A \left( s_{k+1} s_k - s_{k+1} \highlightbox{\frac{1}{b_k} s_k} b_{k+1}\right) A \\
=& A \left( s_{k+1} s_k - s_{k+1} \left( s_k \frac{1}{b_{k+1}} - \frac{1}{b_k}\ff e_k \frac{1}{b_{k+1}} + \frac{1}{b_kb_{k+1}} \right) b_{k+1}\right) A \\
=& A \left( s_{k+1} \frac{1}{b_k} \highlightbox{\ff} e_k - s_{k+1} \frac{1}{b_k} \right) A = A s_{k+1} \frac{1}{b_k} e_k A - A s_{k+1} \frac{1}{b_k} A.
\end{aligned}
\end{equation}

The only terms left are (\ref{eqn:lhs1}) and (\ref{eqn:rhs1}), which we need to expand into a large number of terms. We will do so separately, starting with the one from the left hand side.

\begin{align}
& \text{(\ref{eqn:rhs1})} = -A \left( s_{k+1} \ff s_{k} b_{k+2} \highlightbox{\ff} s_{k+1} \right) A \nonumber = -A \left( \frac{1}{b_{k+2}} \ff \highlightbox{b_{k+2} s_{k+1}} \ff s_{k} b_{k+2} s_{k+1} \right) A \nonumber\\
=& -A \frac{1}{b_{k+2}} \ff s_{k+1} b_{k+1} \highlightbox{\ff} s_{k} b_{k+2} s_{k+1} A + A \frac{1}{b_{k+2}} \ff e_{k+1} \highlightbox{\ff} s_{k} b_{k+2} s_{k+1} A - A s_{k} s_{k+1} A\nonumber\\
=& -A \frac{1}{b_{k+2}} \ff \highlightbox{s_{k+1} b_{k+1}} s_{k} b_{k+2} s_{k+1} A + A \frac{1}{b_{k+2}} \ff e_{k+1} s_{k} \highlightbox{b_{k+2} s_{k+1}} A - A s_{k} s_{k+1} A\nonumber\\
=& -A  s_{k+1} s_{k} \highlightbox{b_{k+2} s_{k+1}} A -A \frac{1}{b_{k+2}} \ff e_{k+1} s_{k} \highlightbox{b_{k+2} s_{k+1}} A +A s_{k} s_{k+1} A\nonumber\\
&+A \frac{1}{b_{k+2}} \ff e_{k+1} s_{k} s_{k+1} b_{k+1} A -A \frac{1}{b_{k+2}} \ff e_{k+1} s_{k} e_{k+1} A +A \frac{1}{b_{k+2}} \ff e_{k+1} s_{k} A\nonumber\\
&- A s_{k} s_{k+1} A\nonumber\\
=& -A  s_{k+1} s_{k} s_{k+1} b_{k+1} A +A  s_{k+1} s_{k} e_{k+1} A -A  s_{k+1} s_{k} A \nonumber\\
&-A \frac{1}{b_{k+2}} \ff e_{k+1} s_{k} s_{k+1} b_{k+1} A +A \frac{1}{b_{k+2}} \ff e_{k+1} s_{k} e_{k+1} A -A \frac{1}{b_{k+2}} \ff e_{k+1} s_{k} A\nonumber\\
& +A \frac{1}{b_{k+2}} \ff e_{k+1} s_{k} s_{k+1} b_{k+1} A -A \frac{1}{b_{k+2}} \ff e_{k+1} s_{k} e_{k+1} A +A \frac{1}{b_{k+2}} \ff e_{k+1} s_{k} A \nonumber\\
=& -A  s_{k+1} s_{k} s_{k+1} b_{k+1} A\label{eqn:14a1}\\
&+A  s_{k+1} s_{k} e_{k+1} A\label{eqn:14a2}\\
&-A  s_{k+1} s_{k} A\label{eqn:14a3}
\end{align}

For the other term we obain

\begin{align}
&-\text{(\ref{eqn:lhs1})} = A \left( b_{k+1} s_k \highlightbox{\ff} s_{k+1} \frac{1}{b_k}\ff s_k b_{k+1} \right) A \nonumber \\
=& A \left( \frac{b_{k+1}}{b_{k+2}} \ff s_k \highlightbox{b_{k+2} s_{k+1}} \frac{1}{b_k}\ff s_k b_{k+1} \right) A \nonumber\\
=& A \frac{b_{k+1}}{b_{k+2}}\ff \left( s_k \frac{1}{b_k} s_{k+1} b_{k+1} \highlightbox{\ff} s_k b_{k+1} - s_k \frac{1}{b_k} e_{k+1} \highlightbox{\ff} s_k b_{k+1} + s_k \frac{1}{b_k} \ff s_k b_{k+1}\right) A \nonumber\\
=& A \frac{b_{k+1}}{b_{k+2}}\ff s_k \frac{1}{b_k} s_{k+1} b_{k+1} s_k b_{k+1} A \label{eqn:13a1}\\
& - A \frac{b_{k+1}}{b_{k+2}}\ff s_k \frac{1}{b_k} e_{k+1} s_k b_{k+1}A\label{eqn:13a2}\\
& + A \frac{b_{k+1}}{b_{k+2}}\ff s_k \frac{1}{b_k} \ff s_k b_{k+1} A \label{eqn:13a3}.
\end{align}

We examine the terms (\ref{eqn:13a1}), (\ref{eqn:13a2}), (\ref{eqn:13a3}) separately.
\begin{align}
& \text{(\ref{eqn:13a1})} = A \frac{b_{k+1}}{b_{k+2}}\ff \highlightbox{s_k \frac{1}{b_k}} s_{k+1} b_{k+1} s_k b_{k+1} A \nonumber\\
=& A \left( \frac{1}{b_{k+2}}\ff s_k \highlightbox{s_{k+1} b_{k+1}} s_k - \frac{1}{b_{k+2}}\ff e_k \frac{1}{b_k} \highlightbox{s_{k+1} b_{k+1}} s_k + \frac{1}{b_{k+2}b_k}\ff \highlightbox{s_{k+1} b_{k+1}} s_k \right) b_{k+1} A \nonumber\\
=& A s_k s_{k+1} s_k b_{k+1} A \label{eqn:13a1-1}\\
&+ A \frac{1}{b_{k}}\ff s_k e_{k+1} s_k b_{k+1} A \label{eqn:13a1-2}\\
&- A \frac{b_{k+1}}{b_{k+2}} A \label{eqn:13a1-3}\\
& -A e_k \frac{1}{b_k} s_{k+1} s_k b_{k+1} A\label{eqn:13a1-4}\\
& -A \frac{1}{b_{k+2}} e_k \frac{1}{b_k} e_{k+1} s_k b_{k+1} A\label{eqn:13a1-5}\\
& +A \frac{1}{2\beta}\frac{1}{b_{k+2}} e_k \frac{b_{k+1}}{b_k} A\label{eqn:13a1-6}\\
& +A \frac{1}{b_k}\ff s_{k+1} s_k b_{k+1} A\label{eqn:13a1-7}\\
& +A \frac{1}{b_{k+2}b_k}\ff e_{k+1} s_k b_{k+1} A\label{eqn:13a1-8}\\
& -A \frac{1}{b_{k+2}b_k}\ff s_k b_{k+1} A\label{eqn:13a1-9}
\end{align}

\begin{align}
& \text{(\ref{eqn:13a2})} = - A \left( \frac{b_{k+1}}{b_{k+2}} \ff \highlightbox{s_k \frac{1}{b_k}} e_{k+1} s_k b_{k+1} \right) A \nonumber\\
=& - A \frac{1}{b_{k+2}} \ff s_k e_{k+1} s_k b_{k+1} A \label{eqn:13a2-1}\\
& + A \frac{1}{b_{k+2}} \ff e_k \frac{1}{b_k} e_{k+1} s_k b_{k+1} A \label{eqn:13a2-2}\\
& - A \frac{1}{b_k b_{k+2}} \ff e_{k+1} s_k b_{k+1}  A \label{eqn:13a2-3}
\end{align}

The sum of (\ref{eqn:13a3}) and (\ref{eqn:rest1}) simplifies to
\begin{align}
& \text{(\ref{eqn:13a3})} + \text{(\ref{eqn:rest1})} \nonumber \\
=& A \left( \frac{1}{b_{k+2}}\ff \highlightbox{b_{k+1}s_k} \frac{1}{b_k} \ff s_k b_{k+1}\right.  +s_k (1-\ff) s_k \frac{b_{k+1}}{b_{k+2}}A + A  \frac{1}{2\beta} e_k \frac{b_{k+1}}{b_k b_{k+2}} \nonumber\\
&- \left. e_k \frac{1}{b_k} (1-\ff) s_k \frac{b_{k+1}}{b_{k+2}} A - A e_{k} \frac{1}{b_k b_{k+2}} \right) A\nonumber\\
=& A \left( \frac{1}{b_{k+2}}\ff s_k \ff s_k b_{k+1} - \frac{1}{b_{k+2}}\ff e_k \frac{1}{b_k}\ff s_k b_{k+1} + \frac{1}{b_{k+2}b_k}\ff s_k b_{k+1}\right. \nonumber\\
&+s_k (1-\ff) s_k \frac{b_{k+1}}{b_{k+2}} + \frac{1}{2\beta} e_k \frac{b_{k+1}}{b_k b_{k+2}} - \left. e_k \frac{1}{b_k} (1-\ff) s_k \frac{b_{k+1}}{b_{k+2}} - e_{k} \frac{1}{b_k b_{k+2}} \right) A\nonumber\\
=&A \left( \frac{b_{k+1}}{b_{k+2}} - \highlightbox{e_k \frac{1}{b_k} s_k} \frac{b_{k+1}}{b_{k+2}} + \frac{1}{b_{k+2}b_k}\ff s_k b_{k+1} + \frac{1}{2\beta} e_k \frac{b_{k+1}}{b_k b_{k+2}} - e_{k} \frac{1}{b_k b_{k+2}}\right) A\nonumber\\
=&\frac{1}{b_{k}} \label{eqn:rest2-1}\\
& + A\frac{1}{b_k}\ff s_k \frac{b_{k+1}}{b_{k+2}} A \label{eqn:rest2-2}\\
& - A e_{k} \frac{1}{b_k b_{k+2}} A\label{eqn:rest2-3}.
\end{align}

We now compare all the results and see that the following summands cancel each other: (\ref{eqn:13a2-1}) and (\ref{eqn:13a1-2}), (\ref{eqn:13a2-2}) and (\ref{eqn:13a1-5}), (\ref{eqn:13a2-3}) and (\ref{eqn:13a1-8}), (\ref{eqn:rest2-1}) and (\ref{eqn:13a1-3}), (\ref{eqn:rest2-2}) and (\ref{eqn:13a1-9}), (\ref{eqn:14a1}) and (\ref{eqn:13a1-1}).

From the remaining summands we add the following:
\begin{equation}\label{eqn:finalrest}
\begin{aligned}
&\text{(\ref{eqn:13a1-4})} + \text{(\ref{eqn:14a2})} + \text{(\ref{eqn:14b-13e-13f})}\\
=&A \left(- e_k s_{k+1} \highlightbox{\frac{1}{b_k} s_k} b_{k+1} + \highlightbox{s_{k+1} s_{k} e_{k+1}} + e_k s_{k+1} \frac{1}{b_k} \right) A\\
=&A \left( - \highlightbox{e_k s_{k+1} s_k} + e_k s_{k+1} \frac{1}{b_k} e_k - e_k s_{k+1} \frac{1}{b_k}+ e_{k} e_{k+1} + e_k s_{k+1} \frac{1}{b_k} \right) A\\
=&A \left(- e_k e_{k+1} + e_k s_{k+1} \frac{1}{b_k} e_k + e_{k} e_{k+1}\right) A = A e_k s_{k+1} \frac{1}{b_k} e_k A\\
\end{aligned}
\end{equation}
A number of terms cancel each other:
\begin{equation}
\begin{aligned}
&\text{\eqref{eqn:13a1-7}} + \text{\eqref{eqn:14a3}} + \text{\eqref{eqn:14e-13b}}\\
=& A \left( s_{k+1} \highlightbox{\frac{1}{b_k} s_k} b_{k+1} - s_{k+1} s_{k} + s_{k+1} \frac{1}{b_k} e_k - s_{k+1} \frac{1}{b_k} \right) A\\
=& A \left( s_{k+1} s_k - s_{k+1} \frac{1}{b_k}e_k + s_{k+1} \frac{1}{b_k} - s_{k+1} s_{k} + s_{k+1} \frac{1}{b_k} e_k - s_{k+1} \frac{1}{b_k} \right) A =0
\end{aligned}
\end{equation}
Collecting all the remaining summands we get the following expression:
\begin{equation}
\begin{aligned}
&\text{\eqref{eqn:13a1-6}} + \text{\eqref{eqn:rest2-3}} + \text{\eqref{eqn:finalrest}}\\
=& A \left(\frac{1}{2\beta}\frac{1}{b_{k+2}} e_k \frac{b_{k+1}}{b_k} 
- e_{k} \frac{1}{b_k b_{k+2}}
+ \highlightbox{e_k s_{k+1}} \frac{1}{b_k} e_k \right) A\\
=& A \left( \frac{1}{2\beta}\frac{1}{b_{k+2}} e_k \frac{b_{k+1}}{b_k} - e_{k} \frac{1}{b_k b_{k+2}} + e_k e_{k+1} \highlightbox{\ff s_{k} \frac{1}{b_k} e_k} \right) A \\
=&A \left( \frac{1}{2\beta}\frac{1}{b_{k+2}} e_k \frac{b_{k+1}}{b_k} - e_{k} \frac{1}{b_k b_{k+2}} + \frac{1}{2\beta}\frac{1}{b_{k+2}} e_k e_{k+1} \highlightbox{\ff} e_k \right) A \\
=& A \left(e_k \frac{b_{k+1} - 2 \beta + b_k}{2\beta b_{k+2}b_k} \right)A = A \left(e_k \frac{y_{k+1} + y_k}{2\beta b_{k+2}b_k} \right)A = 0.
\end{aligned}
\end{equation}
This proves the claim of the proposition.
\end{proof}

\subsection{Relations involving only $\tilde{e}_k$'s in $\ff \VWdcycl \ff$}

We continue with the defining relations that will only involve the $\tilde{e}_k$'s. The key relation that $\tilde{e}_k$ squares to $\delta \tilde{e}_k$ was already proven in Lemma \ref{lem:e2=deltae}. We continue with the remaining relations:

\begin{lemma} \label{lem:eiej=ejei}
We have $\tilde{e}_i \tilde{e}_j= \tilde{e}_j \tilde{e}_i$ for $1 \leq i,j < d$ with $|i-j| > 1$.
\end{lemma}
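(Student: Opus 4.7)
The plan is to separate the ``polynomial'' and idempotent parts of $Q_k$ from the $e_k$'s, and then exploit the commutativity $e_ie_j=e_je_i$ coming from (VW.\ref{5a}).

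Write $Q_k=p_k\ff=\ff p_k$ with $p_k=\sqrt{b_{k+1}/b_k}$ (defined inside the appropriate extension by Lemma~\ref{prop:2}). Note that $p_k$ lies in the commutative subalgebra generated by $y_k,y_{k+1}$ and square roots/inverses thereof, hence commutes with $\ff$, with every $p_\ell$, and, by (VW.\ref{5b}), with every $e_\ell$ for $\ell\notin\{k,k+1\}$. For $|i-j|>1$ we have $\{i,i+1\}\cap\{j,j+1\}=\emptyset$; in particular $e_jp_i=p_ie_j$ and $e_ip_j=p_je_i$.

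Using these commutation relations I rewrite
\[
  \tilde{e}_i\tilde{e}_j \;=\; \ff\, p_i\, e_i\, p_i\, \ff\, p_j\, e_j\, p_j\, \ff
\]
by migrating every $p_i$ and $p_j$ factor to the outermost positions (each swap being either $p$ past $p$, $p$ past $\ff$, or $p_i$ past $e_j$ / $p_j$ past $e_i$, all of which are legal). The outcome is
\[
  \tilde{e}_i\tilde{e}_j \;=\; p_ip_j\cdot\bigl(\ff\, e_i\, \ff\, e_j\, \ff\bigr)\cdot p_ip_j,
\]
and by the symmetric computation
\[
  \tilde{e}_j\tilde{e}_i \;=\; p_ip_j\cdot\bigl(\ff\, e_j\, \ff\, e_i\, \ff\bigr)\cdot p_ip_j.
\]

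It therefore remains to verify the core identity $\ff e_i\ff e_j\ff=\ff e_j\ff e_i\ff$. For this, I aim to show that each side equals $\ff e_ie_j\ff=\ff e_je_i\ff$, the last equality being just (VW.\ref{5a}). The reduction $\ff e_i\ff e_j\ff=\ff e_ie_j\ff$ amounts to the vanishing $\ff e_i(1-\ff)e_j\ff=0$. This I verify via the faithful action of $\VWdcycl$ on $M^{\mathfrak{p}}(\underline{\delta})\otimes V^{\otimes d}$ from Theorem~\ref{iso}: by Lemma~\ref{lem:eigenvalue_and_e}, $e_j$ sends a small generalized eigenspace inside $\ff M$ into a sum of generalized eigenspaces $M_{\mathbf{i}'}$ with $\mathbf{i}'_\ell$ still small for $\ell\neq j,j+1$; the factor $(1-\ff)$ then only retains summands in which $\mathbf{i}'_j=\beta$ or $\mathbf{i}'_{j+1}=\beta$; applying $e_i$ does not touch the coordinates at positions $j$ and $j+1$ (again because $|i-j|>1$); so the outermost $\ff$ annihilates the result.

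The main obstacle is this last step, the vanishing $\ff e_i(1-\ff)e_j\ff=0$, which requires the precise eigenvalue analysis described above. The preceding commutation bookkeeping is routine once the factorization $Q_k=\ff p_k$ is exploited.
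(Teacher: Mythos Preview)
Your overall strategy matches the paper's one-line proof: reduce to the commutation of $\ff e_i \ff$ with the polynomial factors $Q_j$, and then handle the core identity $\ff e_i \ff e_j \ff = \ff e_j \ff e_i \ff$. Your explicit eigenvalue argument for the vanishing of $\ff e_i(1-\ff)e_j\ff$ is correct and in fact spells out what the paper leaves implicit (it is the same style of reasoning as in Proposition~\ref{prop:kill_f}).

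There is, however, a genuine technical flaw in your presentation. You write $Q_k = p_k\ff$ with a bare $p_k=\sqrt{b_{k+1}/b_k}$ and then commute $p_k$ past $e_j$ and past $\ff$ separately. But $p_k$ does not exist as an element of $\VWdcycl$: the element $b_k=\beta+y_k$ is not invertible there, since $y_k$ can have eigenvalue $-\beta$ (for instance $v_4$ in Section~\ref{sec:example} has $y_2$-eigenvalue $-\beta$). Only the combination $\frac{b_{k+1}}{b_k}\ff$ is well-defined, as the paper stresses right after \eqref{20}. So the step ``$p_i$ past $e_j$'' has no meaning as written. The fix is exactly what the paper does: work inside $\ff\VWdcycl\ff$, where $Q_i$ is a polynomial in $y_i\ff$ and $y_{i+1}\ff$, and observe that $\ff e_j\ff$ commutes with $y_\ell\ff$ for $\ell\neq j,j+1$ (using (VW.\ref{5b}) together with the fact that $\ff$ commutes with all $y$'s), hence with $Q_i$. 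From that point on your reduction and your eigenvalue argument go through unchanged.
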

\begin{proof}
Since $\ff e_i \ff$ commutes with $Q_j$ when $| i - j | > 1$ the statement follows.
\end{proof}

\begin{lemma} \label{lem:eee=e}
We have $\tilde{e}_k \tilde{e}_{k+1} \tilde{e}_k= \tilde{e}_{k}$ and $\tilde{e}_{k+1} \tilde{e}_{k} \tilde{e}_{k+1}= \tilde{e}_{k+1}$ for $1 \leq k < d-1$.
\end{lemma}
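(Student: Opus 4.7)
My plan is to prove $\tilde{e}_k\tilde{e}_{k+1}\tilde{e}_k=\tilde{e}_k$ directly; the relation $\tilde{e}_{k+1}\tilde{e}_k\tilde{e}_{k+1}=\tilde{e}_{k+1}$ then follows by the same argument with the roles of $k$ and $k+1$ swapped. The idea is to expand the left-hand side, then commute all of the (square-root) scalars past the $e_j$'s until the VW braid-like relation $e_ke_{k+1}e_k=e_k$ (VW.\ref{6d}) applies cleanly.

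Concretely, I expand
\[
\tilde{e}_k\tilde{e}_{k+1}\tilde{e}_k \;=\; Q_k\,e_k\,Q_kQ_{k+1}\,e_{k+1}\,Q_{k+1}Q_k\,e_k\,Q_k,
\]
and observe that since every $Q_j$ lies in the commutative subalgebra generated (after inverting and taking square roots) by the $y_j$'s and $\ff$, they all commute with one another. In particular $Q_kQ_{k+1}=\sqrt{b_{k+2}/b_k}\,\ff$, so, working inside $\ff\VWdcycl\ff$ and absorbing $\ff$'s, the expression reads
\[
\sqrt{\tfrac{b_{k+1}}{b_k}}\;e_k\;\sqrt{\tfrac{b_{k+2}}{b_k}}\;e_{k+1}\;\sqrt{\tfrac{b_{k+2}}{b_k}}\;e_k\;\sqrt{\tfrac{b_{k+1}}{b_k}}.
\]

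Next I split $\sqrt{b_{k+2}/b_k}=\sqrt{b_{k+2}}\cdot\sqrt{1/b_k}$ and exploit that $\sqrt{b_{k+2}}$ commutes with $e_k$ (as $y_{k+2}$ does) and $\sqrt{1/b_k}$ commutes with $e_{k+1}$ (as $y_k$ does). After moving these factors past the appropriate $e$'s, the two free copies of $\sqrt{1/b_k}$ collapse to a central $1/b_k$, and I am left with two copies of $\sqrt{b_{k+2}}$ framing the block $e_k\,e_{k+1}\,\cdot\,e_k$. Now I invoke the square-root versions of the identities from relation~(VW.\ref{8a})--(VW.\ref{8b}):
$\sqrt{b_{k+2}}\,e_{k+1}=\sqrt{c_{k+1}}\,e_{k+1}$, $\sqrt{c_{k+1}}\,e_k=\sqrt{b_k}\,e_k$, $e_k\sqrt{1/b_k}=e_k\sqrt{1/c_{k+1}}$ and $\sqrt{1/c_{k+1}}\,e_{k+1}=\sqrt{1/b_{k+2}}\,e_{k+1}$. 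Applying these in sequence the various $\sqrt{b_{k+2}}$ and $\sqrt{b_k}$ factors cancel against each other, and I arrive at
\[
\sqrt{\tfrac{b_{k+1}}{b_k}}\;e_k\,e_{k+1}\,e_k\;\sqrt{\tfrac{b_{k+1}}{b_k}}.
\]
Finally, (VW.\ref{6d}) gives $e_ke_{k+1}e_k=e_k$, so the right-hand side is exactly $Q_k e_k Q_k=\tilde{e}_k$, as required.

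The hard part is justifying the square-root identities such as $\sqrt{b_{k+2}}\,e_{k+1}=\sqrt{c_{k+1}}\,e_{k+1}$. Here I will argue as follows: $(y_{k+1}+y_{k+2})e_{k+1}=0$ implies that $b_{k+2}$ and $c_{k+1}$ act identically on the image of $e_{k+1}$, so all of their polynomial (and, via Lemma~\ref{prop:2}, rational and square-root) functions agree on that image provided we fix branches coherently; this is exactly the same kind of eigenspace-wise justification used repeatedly, e.g.\ in the proof of Proposition~\ref{prop:reduce_fracs} and throughout Section~\ref{sec:invers-square-roots}. Once this point is granted, the entire calculation is just a chain of VW.8-type commutations, and no further subtleties arise.
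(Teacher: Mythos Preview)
Your approach is essentially the paper's, and the overall strategy (expand, commute the $y$-polynomial factors past the $e_j$'s, reduce to $e_ke_{k+1}e_k=e_k$) is correct. One point deserves comment, though: the ``square-root versions'' of (VW.\ref{8a})--(VW.\ref{8b}) that you invoke are unnecessary, and your justification for them is shakier than you indicate. The elements $\sqrt{b_{k+2}}$ and $\sqrt{c_{k+1}}$ are \emph{fixed} polynomials in $y_{k+2}$ respectively $y_{k+1}$, chosen once and for all; on the image of $e_{k+1}$ they are both square roots of the same operator, but that only forces them to agree up to a sign on each generalized eigenspace, and there is no freedom left to ``fix branches coherently''.

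The paper sidesteps this completely. After your own observation that the two copies of $\sqrt{1/b_k}$ collapse to a genuine $\tfrac{1}{b_k}$ sitting between $e_k$ and $e_{k+1}$, one applies the \emph{ordinary} (non-square-root) identities
\[
e_k\,\tfrac{1}{b_k}=e_k\,\tfrac{1}{c_{k+1}}\quad\text{and}\quad \tfrac{1}{c_{k+1}}\,e_{k+1}=\tfrac{1}{b_{k+2}}\,e_{k+1},
\]
coming directly from (VW.\ref{8a}) and (VW.\ref{8b}). The resulting $\tfrac{1}{b_{k+2}}$ commutes with $e_k$, slides out, and cancels against the two $\sqrt{b_{k+2}}$'s. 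What remains is $Q_k\,e_ke_{k+1}e_k\,Q_k=Q_ke_kQ_k=\tilde e_k$ by (VW.\ref{6d}). So your computation becomes entirely rigorous once you drop the square-root identities and use this route instead.
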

\begin{proof}
We only prove the first equality, the second is done in an analogous way. We compute
\begin{eqnarray*}
\tilde{e}_k \tilde{e}_{k+1} \tilde{e}_k &=& Q_k e_k Q_k Q_{k+1}e_{k+1} Q_{k+1} Q_k e_k Q_k\\
&=&  Q_k \sqrt{b_{k+2}}\ff e_k \ff e_{k+1} \frac{1}{b_k} \ff e_k \sqrt{b_{k+2}} \ff Q_k \\
&\overset{(a)}{=}& Q_k \sqrt{b_{k+2}}\ff e_k \ff e_{k+1} \frac{1}{c_{k+1}} \ff e_k \sqrt{b_{k+2}} \ff Q_k\\
&\overset{(b)}{=}& Q_k \sqrt{b_{k+2}}\ff e_k \ff e_{k+1} \frac{1}{b_{k+2}} \ff e_k \sqrt{b_{k+2}} \ff Q_k\\
&=& Q_k \sqrt{b_{k+2}}\ff e_k \ff e_{k+1} \ff e_k \frac{\sqrt{b_{k+2}}}{b_{k+2}} \ff Q_k \overset{(c)}{=} \tilde{e}_k
\end{eqnarray*}
Where $(a)$ is due to Relation~(VW.\ref{8b}), since $\frac{1}{b_k} \ff e_k = \frac{1}{c_{k+1}} \ff e_k$, and $(b)$ follows from $e_{k+1} \frac{1}{c_{k+1}} \ff = e_{k+1}\frac{1}{b_{k+2}} \ff$ due to Relation~(VW.\ref{8a}). Finally $(c)$ is again a consequence of Lemma \ref{lem:commute_b} and Relation~(VW.\ref{6d}).
\end{proof}

\subsection{Mixed relations in $\ff \VWdcycl \ff$}

We are now left with proving the relations involving both $\tilde{s}_j$'s and $\tilde{e}_j$'s.

\begin{lemma} \label{lem:es=e}
We have $\tilde{e}_{k} \tilde{s}_{k} = \tilde{e}_{k}= \tilde{s}_{k} \tilde{e}_{k}$ for $1 \leq k < d$.
\end{lemma}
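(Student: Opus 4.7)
I will prove $\tilde{e}_k \tilde{s}_k = \tilde{e}_k$ first; the identity $\tilde{s}_k \tilde{e}_k = \tilde{e}_k$ will follow by a structurally analogous argument. Expanding gives
\begin{eqnarray*}
\tilde{e}_k\tilde{s}_k &=& -\,Q_k e_k Q_k^2 s_k Q_k \;+\; Q_k e_k Q_k \cdot \tfrac{1}{b_k}\ff.
\end{eqnarray*}
Since $Q_k$ is built out of $b_k$ and $b_{k+1}$ (times the idempotent $\ff$), it commutes with $\tfrac{1}{b_k}\ff$, and by definition $Q_k^2 = \tfrac{b_{k+1}}{b_k}\ff$. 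Collecting the outer $Q_k$'s, the claim reduces to checking
\begin{eqnarray*}
-\,e_k\tfrac{b_{k+1}}{b_k}s_k\ff \;+\; e_k\tfrac{1}{b_k}\ff &=& e_k\ff
\end{eqnarray*}
inside $\VWdcycl$.

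To establish this, I plan to first apply Relation~(VW.\ref{8b}), which yields $e_k b_{k+1} = e_k c_k = 2\beta e_k - e_k b_k$, and hence $e_k\tfrac{b_{k+1}}{b_k}s_k\ff = 2\beta\, e_k\tfrac{1}{b_k}s_k\ff - e_k s_k\ff$. The first summand collapses by Proposition~\ref{prop:reduce_fracs}(i) to $e_k\tfrac{1}{b_k}\ff$, and the second reduces to $e_k\ff$ by Relation~(VW.\ref{6a}). Substituting back, the two copies of $e_k\tfrac{1}{b_k}\ff$ cancel, leaving $e_k\ff$ as required.

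For $\tilde{s}_k \tilde{e}_k = \tilde{e}_k$ the same strategy applies, but the $e_k$ now sits on the right, so instead of (VW.\ref{8b}) one uses Lemma~\ref{lem:commute_b} to rewrite $s_k b_{k+1} = b_k s_k - e_k + 1$. After expansion, Proposition~\ref{prop:reduce_fracs}(ii) handles the term $\ff s_k\tfrac{1}{b_k}e_k\ff$ and Proposition~\ref{prop:reduce_fracs}(iii) handles $\ff e_k\tfrac{1}{b_k}e_k\ff$; the resulting contributions telescope to $\ff e_k\ff$.

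The main obstacle, as throughout this section, is the bookkeeping of idempotent insertions: every fraction $\tfrac{1}{b_k}$ must act on a subspace where $b_k$ is invertible, and neither $e_k$ nor $s_k$ commutes with the polynomial generators $b_j$. Once one uses the simplification toolkit developed earlier (Lemma~\ref{lem:commute_b}, Proposition~\ref{prop:kill_f}, and especially Proposition~\ref{prop:reduce_fracs}) in the order suggested above, the computation is a short sequence of substitutions with no further subtlety.
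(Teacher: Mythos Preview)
Your proof is correct and follows essentially the same strategy as the paper: expand, simplify the inner expression using Proposition~\ref{prop:reduce_fracs}, and treat the second equality by the mirror argument. Two small points are worth flagging. First, the relation giving $e_k b_{k+1} = e_k c_k$ is (VW.\ref{8a}), not (VW.\ref{8b}). Second, your step ``collecting the outer $Q_k$'s'' silently drops the inner $\ff$ coming from $Q_k^2 = \tfrac{b_{k+1}}{b_k}\ff$: that idempotent sits \emph{between} $\tfrac{b_{k+1}}{b_k}$ and $s_k$, not after $s_k$, and removing it is exactly where Proposition~\ref{prop:kill_f} is needed (this is the paper's step~(a)). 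You acknowledge this bookkeeping issue in your closing paragraph, but it should be pinned to that specific line.

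As a minor comparison, your route for $\tilde{e}_k\tilde{s}_k$ via (VW.\ref{8a}) is marginally more economical than the paper's, which instead uses (VW.\ref{7}) to commute $b_{k+1}$ past $s_k$; that rewriting produces an extra $e_k\tfrac{1}{b_k}e_k$ term and hence requires Proposition~\ref{prop:reduce_fracs}(iii) as well as (i). Your version needs only~(i) together with $e_k s_k = e_k$.
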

\begin{proof}
We compute
\begin{eqnarray*}
\tilde{e}_k \tilde{s}_k &=& -Q_k e_k \frac{b_{k+1}}{b_k} \ff s_k Q_k + Q_k e_k Q_k \frac{1}{b_k}\ff \\
&\overset{(a)}{=}& -Q_k e_k \frac{b_{k+1}}{b_k} s_k Q_k + Q_k e_k Q_k \frac{1}{b_k}\ff \\
&\overset{(b)}{=}& -Q_k e_k \frac{1}{b_k} s_k b_{k}Q_k + Q_k e_k \frac{1}{b_k} e_k Q_k\\
&\overset{(c)}{=}& - \frac{1}{2\beta} Q_k e_k Q_k + (1+\frac{1}{2\beta}) Q_k e_k Q_k = \tilde{e}_k
\end{eqnarray*}
where equality $(a)$ is due to Proposition \ref{prop:kill_f}, $(b)$ is due to Relation~(VW.\ref{7}), and finally $(c)$ is due to Proposition \ref{prop:reduce_fracs}. The second equality in the claim follows analogously.
\end{proof}

\begin{lemma} \label{lem:se=es}
We have $\tilde{s}_i \tilde{e}_j= \tilde{e}_j \tilde{s}_i$ for $1 \leq i,j < d$ with $|i-j| > 1$.
\end{lemma}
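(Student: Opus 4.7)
The plan is a direct verification that, under the hypothesis $|i-j|>1$, every constituent of $\tilde{s}_i$ commutes with every constituent of $\tilde{e}_j$. Since
$\tilde{s}_i = -Q_i s_i Q_i + \tfrac{1}{b_i}\ff$ and $\tilde{e}_j = Q_j e_j Q_j$
only involve the generators $s_i, y_i, y_{i+1}$ on one side and $e_j, y_j, y_{j+1}$ on the other, together with the global idempotent $\ff$, the assumption $|i-j|>1$ makes the index sets $\{i,i+1\}$ and $\{j,j+1\}$ disjoint. By relations (VW.\ref{2c}), (VW.\ref{5a}), (VW.\ref{5b}), and (VW.\ref{5c}) this yields $s_i e_j = e_j s_i$, $[s_i, y_j]=[s_i,y_{j+1}] = 0 = [e_j, y_i] = [e_j, y_{i+1}]$. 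Consequently $s_i$ commutes with $b_j, b_{j+1}$ (hence with $\sqrt{b_{j+1}/b_j}$ and with $1/b_j$), while $e_j$ commutes with $b_i, b_{i+1}$ (hence with $\sqrt{b_{i+1}/b_i}$ and with $1/b_i$).

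The only subtle point is the idempotent $\ff = \eta_1 \cdots \eta_d$, which a priori involves all $y_k$. I would handle this by recalling that each $\eta_k$ is a polynomial in $y_k$ alone, so $e_j$ commutes with every $\eta_k$ for $k \notin\{j,j+1\}$, in particular with $\eta_i \eta_{i+1}$, and likewise $s_i$ commutes with every $\eta_k$ for $k \notin\{i,i+1\}$, in particular with $\eta_j \eta_{j+1}$. Factorising $\ff = \eta_i\eta_{i+1}\cdot \eta_j\eta_{j+1}\cdot\prod_{k \notin\{i,i+1,j,j+1\}}\eta_k$ and using the pairwise commutativity and idempotency of these factors, one can regroup the $\ff$'s hidden inside $Q_i$ and $Q_j$ so that every atom appearing in $\tilde{s}_i$ pairwise commutes with every atom appearing in $\tilde{e}_j$. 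The equality $\tilde{s}_i\tilde{e}_j=\tilde{e}_j\tilde{s}_i$ then follows immediately, exactly in the spirit of the commutation Lemmas \ref{lem:sisj=sjsi} and \ref{lem:eiej=ejei} established earlier. No serious obstacle is anticipated; the argument is a one-liner modulo the bookkeeping around $\ff$.
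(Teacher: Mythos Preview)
Your proposal is correct and follows essentially the same approach as the paper, which simply refers back to the arguments for Lemmas~\ref{lem:sisj=sjsi} and~\ref{lem:eiej=ejei}. The only difference is emphasis: the paper phrases the key observation as ``$\ff s_i\ff$ commutes with $Q_j$ and $\tfrac{1}{b_j}\ff$'' (working entirely inside $\ff\VWdcycl\ff$ and using that $Q_j,\tfrac{1}{b_j}\ff$ lie in the commutative subalgebra generated by $y_j\ff,y_{j+1}\ff$), whereas you unpack $\ff=\prod_k\eta_k$ and argue that $s_i$ and $e_j$ commute with the individual factors $\eta_k$ for $k\notin\{i,i+1\}$ resp.\ $k\notin\{j,j+1\}$; both routes amount to the same one-line verification.
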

\begin{proof}
This follows by the same arguments as for Lemmas \ref{lem:sisj=sjsi} and \ref{lem:eiej=ejei}.
\end{proof}

\begin{lemma} \label{lem:see=se}
We have $\tilde{s}_k \tilde{e}_{k+1} \tilde{e}_k= \tilde{s}_{k+1}\tilde{e}_{k}$ and $\tilde{s}_{k+1} \tilde{e}_{k} \tilde{e}_{k+1}= \tilde{s}_{k}\tilde{e}_{k+1}$ for $1 \leq k < d-1$.
\end{lemma}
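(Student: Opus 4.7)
The plan is to establish the first identity $\tilde s_k \tilde e_{k+1}\tilde e_k=\tilde s_{k+1}\tilde e_k$ by direct expansion, and then to deduce the second $\tilde s_{k+1}\tilde e_k\tilde e_{k+1}=\tilde s_k\tilde e_{k+1}$ by the analogous argument with the roles of $k$ and $k+1$ swapped and (VW.\ref{6c}) replacing (VW.\ref{6b}). The conceptual input is the VW identity $s_k e_{k+1}e_k=s_{k+1}e_k$ from (VW.\ref{6b}); the rest is bookkeeping to align the $Q_j$'s and the correction term $b_k^{-1}\ff$ of $\tilde s_k$ on both sides.

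First I would unroll the definitions and write
$\tilde s_k\tilde e_{k+1}\tilde e_k=-Q_k s_k Q_k\,Q_{k+1}e_{k+1}Q_{k+1}\,Q_k e_k Q_k+\tfrac{1}{b_k}\ff\,Q_{k+1}e_{k+1}Q_{k+1}\,Q_k e_k Q_k$
and compare with $\tilde s_{k+1}\tilde e_k=-Q_{k+1}s_{k+1}Q_{k+1}Q_k e_k Q_k+\tfrac{1}{b_{k+1}}\ff\,Q_k e_k Q_k$. The next step is to move the polynomial (resp. square-root) factors past the $e_j$'s: by Relation~(VW.\ref{5b}) $e_{k+1}$ commutes with $y_j$ for $j\notin\{k+1,k+2\}$ (so with $b_k$ and hence the $b_k$-part of $Q_k$), and by (VW.\ref{8a},\ref{8b}) one has $b_{k+2}e_{k+1}=c_{k+1}e_{k+1}$ and $e_{k+1}c_{k+1}=e_{k+1}b_{k+2}$; analogously $b_{k+1}e_k=c_k e_k=e_k c_k$. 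These identities let us rewrite the $Q_{k+1}$-factors surrounding $e_{k+1}$ (and the $Q_k$ surrounding $e_k$), producing a central factor that is polynomial in the $y_j$'s sitting next to $s_k$; this polynomial can then be transported across $s_k$ using Lemma \ref{lem:commute_b}, at the cost of error terms involving $e_k$ and constants.

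Once the transport is done, the core $s_k e_{k+1}\cdots e_k$ of the main summand can be rewritten as $s_{k+1}\cdots e_k$ via the VW identity (VW.\ref{6b}); this is precisely the step that converts the main term of the left hand side into the main term $-Q_{k+1}s_{k+1}Q_{k+1}Q_k e_k Q_k$ of the right hand side. All remaining discrepancies are combinations of expressions of the form $e_k\tfrac{1}{b_k}e_k\ff$, $e_k\tfrac{1}{b_k}s_k\ff$, and $\ff s_k\tfrac{1}{b_k}e_k\ff$, which are precisely the expressions collapsed by Proposition \ref{prop:reduce_fracs} to simpler multiples of $e_k\tfrac{1}{b_k}\ff$ and $\tfrac{1}{b_k}\ff e_k\ff$; together with Proposition \ref{prop:kill_f} (absorbing spurious idempotents $\ff$ in the middle of products) these combine with the secondary term $\tfrac{1}{b_k}\ff\,Q_{k+1}e_{k+1}Q_{k+1}Q_ke_kQ_k$ of the left hand side to reproduce exactly $\tfrac{1}{b_{k+1}}\ff Q_ke_kQ_k$.

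The main obstacle, as in the proof of the braid relation (Proposition \ref{lem:si_braid}), is the sheer number of cross terms generated when moving the fractional factors across $s_k$ and $e_{k+1}$: each use of Lemma \ref{lem:commute_b} produces a correction of the form $\pm e_k$ or $\pm 1$ that must be combined with others, and the collapse of the three expressions in Proposition \ref{prop:reduce_fracs} has to be orchestrated carefully. I expect no further surprises beyond those already encountered in Lemmas \ref{lem:s2=f}--\ref{lem:see=se}, so the argument is a long but routine calculation once the strategy above is fixed; the second identity is proven mutatis mutandis.
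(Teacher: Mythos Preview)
Your strategy is exactly that of the paper: expand the definitions, use Proposition~\ref{prop:kill_f} to drop intermediate $\ff$'s, move the fractional factor across $s_k$ via Lemma~\ref{lem:commute_b}, and then invoke (VW.\ref{6b}) to convert $s_k e_{k+1}e_k$ into $s_{k+1}e_k$; the second identity follows by the symmetric argument with (VW.\ref{6c}).

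One remark worth making: you substantially overestimate the length of the computation. In the paper the proof is four lines and Proposition~\ref{prop:reduce_fracs} is never invoked. The reason is a cancellation you did not spot: when you apply Lemma~\ref{lem:commute_b} to push $\tfrac{1}{b_k}$ leftwards past $s_k$ in the main summand, the constant correction $\tfrac{1}{b_k b_{k+1}}\ff$ produced there exactly annihilates the entire secondary summand $\tfrac{1}{b_k}\ff\,Q_{k+1}e_{k+1}Q_{k+1}Q_k e_k Q_k$. What remains besides the main term is a single piece of the form $e_k\tfrac{1}{b_k}e_{k+1}e_k$, and this collapses directly via (VW.\ref{8a}), (VW.\ref{8b}) and (VW.\ref{6d}) (namely $\tfrac{1}{b_k}e_k=\tfrac{1}{c_{k+1}}e_k$, then $e_{k+1}\tfrac{1}{c_{k+1}}=e_{k+1}\tfrac{1}{b_{k+2}}$, then $e_ke_{k+1}e_k=e_k$) to $\tfrac{1}{b_{k+2}}e_k$, which after reabsorbing the $Q$-factors is precisely $\tfrac{1}{b_{k+1}}\ff\,Q_k e_k Q_k$. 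So none of the three identities in Proposition~\ref{prop:reduce_fracs} are needed here, and there is no proliferation of cross terms as in the braid relation.
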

\begin{proof}
We only prove the first equality, the second is done analogously,
\begin{equation*}
\begin{aligned}
\tilde{s}_k \tilde{e}_{k+1} \tilde{e}_k &= -Q_k s_k Q_k Q_{k+1}e_{k+1} Q_{k+1} Q_k e_k Q_k + \frac{1}{b_{k}}\ff  Q_{k+1} e_{k+1} Q_{k+1} Q_k e_k Q_k \\
&\overset{(a)}{=} - Q_k \sqrt{b_{k+2}}\ff s_k \frac{1}{b_k} e_{k+1} e_k \ff \sqrt{b_{k+2}} \ff Q_k + \frac{\sqrt{b_{k+2}}}{b_k\sqrt{b_{k}}\sqrt{b_{k+1}}}\ff e_{k+1} e_k \ff \sqrt{b_{k+2}} \ff Q_k\\
&\overset{(b)}{=} - Q_{k+1} \frac{1}{\sqrt{b_{k}}}\ff s_k e_{k+1} e_k \ff \sqrt{b_{k+2}} \ff Q_k + Q_{k+1} \frac{1}{\sqrt{b_{k}}}\ff e_{k} \frac{1}{b_k} e_{k+1} e_k \ff \sqrt{b_{k+2}} \ff Q_k\\
&\overset{(c)}{=} - Q_{k+1} \frac{1}{\sqrt{b_{k}}}\ff s_{k+1} e_k \ff \sqrt{b_{k+2}} \ff Q_k + \frac{1}{b_{k+2}} Q_{k+1} \frac{1}{\sqrt{b_{k}}}\ff e_{k} \ff \sqrt{b_{k+2}} \ff Q_k\\
&\overset{(d)}{=} - Q_{k+1} s_{k+1} Q_{k+1} Q_k e_k Q_k + \frac{1}{b_{k+1}} Q_{k} e_{k} Q_k = \tilde{s}_{k+1} \tilde{e}_k.
\end{aligned}
\end{equation*}
Where equality $(a)$ follows from Proposition \ref{prop:kill_f}, while the second equality is a consequence of Lemma \ref{lem:commute_b} and again Proposition \ref{prop:kill_f}. Equality $(c)$ follows by using relations (\ref{8a}) and (\ref{8b}) to rewrite the second summand and then applying relations (\ref{6b}) and (\ref{6d}). Finally equality $(d)$ is using Proposition \ref{prop:kill_f} and reordering the factors afterwards.
\end{proof}

\section{Example: the graded Brauer algebras ${\rm Br}^{\rm gr}_2(\delta)$} \label{sec:example}

In this section we will illustrate explicitly the construction of the isomorphism for the Brauer algebras ${\rm Br}_2(\delta)$ and describe their graded version ${\rm Br}^{\rm gr}_2(\delta)$.

\subsection{Case $\delta\neq 0$}
We first consider the case ${\rm Br}_2(\delta)$ for $\delta \neq 0$. By \cite{Rui} this Brauer algebra is semisimple with basis $1$, $t=t_1$, and $g=g_1$.  The set of orthogonal idempotents is 
\begin{eqnarray*}
\left\{\frac{1+t}{2}-\frac{1}{\delta}g,\, \frac{1-t}{2}+\frac{1}{\delta}g,\, \frac{1}{\delta}g \right\}
\end{eqnarray*}
which gives rise to an isomorphism
$$ {\rm Br}_2(\delta) \cong \mathbb{C} \oplus \mathbb{C} \oplus \mathbb{C}.$$
Note that the grading on ${\rm Br}_2(\delta)$ needs to be trivial since all idempotents have to have degree $0$. We now want to illustrate the idempotent truncation of the level $2$ cyclotomic quotient $\VW_2(\Xi)$ with parameters from Definition\ref{defalphabeta}. 

We describe  $\VW_2^{\rm cycl}$ in terms of the seminormal representation of $\VW_2^{\rm cycl}$ from \cite[Theorem 4.13]{AMR} by an action of $\VW_2^{\rm cycl}$ on the vector space with basis given by all up-down bitableaux of length $2$. Explicitly, this basis consists of
$$
\begin{xy}
  \xymatrix@R=4pt@C=2pt{
v_1 \ar@{=}[d] & v_2 \ar@{=}[d] & v_3 \ar@{=}[d] & v_4 \ar@{=}[d] & v_5 \ar@{=}[d] & v_6 \ar@{=}[d] & v_7 \ar@{=}[d] & v_8 \ar@{=}[d] \\
\left( \emptyset , \emptyset \right) \ar@{-}[d] & \left( \emptyset , \emptyset \right) \ar@{-}[d] &\left( \emptyset , \emptyset \right) \ar@{-}[d] &\left( \emptyset , \emptyset \right) \ar@{-}[d] &\left( \emptyset , \emptyset \right) \ar@{-}[d] &\left( \emptyset , \emptyset \right) \ar@{-}[d] &\left( \emptyset , \emptyset \right) \ar@{-}[d] &\left( \emptyset , \emptyset \right) \ar@{-}[d]\\
\left( \yng(1), \emptyset \right) \ar@{-}[d] & \left( \yng(1), \emptyset \right) \ar@{-}[d] & \left( \yng(1), \emptyset \right) \ar@{-}[d] & \left( \emptyset, \yng(1) \right) \ar@{-}[d] & \left( \yng(1), \emptyset \right) \ar@{-}[d] & \left( \emptyset, \yng(1) \right) \ar@{-}[d] & \left( \emptyset, \yng(1) \right) \ar@{-}[d] & \left( \emptyset, \yng(1) \right) \ar@{-}[d]\\
\left( \yng(2) , \emptyset \right) & \left( \yng(1,1) , \emptyset \right) & \left( \emptyset , \emptyset \right) & \left( \emptyset , \emptyset \right) & \left( \yng(1) , \yng(1) \right) & \left( \yng(1) , \yng(1) \right) &\left( \emptyset , \yng(1,1) \right) & \left( \emptyset , \yng(2) \right)
}
\end{xy}
$$
all of which are common eigenvectors for $y_1$ and $y_2$. The corresponding pairs of eigenvalues are the following:
$$
\begin{array}{cccccccc}
v_1 & v_2 & v_3 & v_4 & v_5 & v_6 & v_7 & v_8\\
(\alpha,\alpha-1) & (\alpha, \alpha+1) & (\alpha,-\alpha) & (\beta,-\beta) & (\alpha,\beta) & (\beta, \alpha) & (\beta,\beta-1) & (\beta,\beta+1) 
\end{array}
$$
where the first entry denotes the eigenvalue for $y_1$ and the second for $y_2$. Using these eigenvalues one can calculate via  \cite[Theorem 4.13]{AMR} the matrix of $s_1$ 
$$
s_1=\left(\begin{array}{cccccccc}
-1 & 0 & 0 & 0 & 0 & 0 & 0 & 0\\
0 & 1 & 0 & 0 & 0 & 0 & 0 & 0\\
0 & 0 & \frac{\alpha+\beta-1}{\alpha-\beta} & \frac{\sqrt{-(2\beta-1)(2\alpha-1)}}{\alpha-\beta} & 0 & 0 & 0 & 0\\
0 & 0 & \frac{\sqrt{-(2\beta-1)(2\alpha-1)}}{\alpha-\beta} & -\frac{\alpha+\beta-1}{\alpha-\beta} & 0 & 0 & 0 & 0\\
0 & 0 & 0 & 0 & \star & \star & 0 & 0\\
0 & 0 & 0 & 0 & \star & \star & 0 & 0\\
0 & 0 & 0 & 0 & 0 & 0 & \star & 0\\
0 & 0 & 0 & 0 & 0 & 0 & 0 & \star
\end{array} \right).
$$
The $\star$'s indicate some further non-zero entries which are irrelevant for the construction. Similarly one obtains the matrix for $e_1$ as
$$
e_1=\left(\begin{array}{cccccccc}
0 & 0 & 0 & 0 & 0 & 0 & 0 & 0\\
0 & 0 & 0 & 0 & 0 & 0 & 0 & 0\\
0 & 0 & (2\alpha-1)\frac{\alpha+\beta}{\alpha-\beta} & \sqrt{-(2\beta-1)(2\alpha-1)}\frac{\alpha+\beta}{\alpha-\beta} & 0 & 0 & 0 & 0\\
0 & 0 & \sqrt{-(2\beta-1)(2\alpha-1)}\frac{\alpha+\beta}{\alpha-\beta} & -(2\beta-1)\frac{\alpha+\beta}{\alpha-\beta} & 0 & 0 & 0 & 0\\
0 & 0 & 0 & 0 & 0 & 0 & 0 & 0\\
0 & 0 & 0 & 0 & 0 & 0 & 0 & 0\\
0 & 0 & 0 & 0 & 0 & 0 & 0 & 0\\
0 & 0 & 0 & 0 & 0 & 0 & 0 & 0
\end{array} \right).
$$
Although tedious, it is of course straightforward to check that these matrices together with the action of $y_1,y_2$ satisfy all the defining relations of $\VW_2^{\rm cycl}$.

 For the isomorphism in Theorem \ref{thm:main} we first need to apply the idempotent $\ff$ from both sides. In the chosen basis this amounts to truncation with respect to the basis vectors where the second partition is always empty (i.e. $v_1$, $v_2$ and $v_3$) that means we look at the submatrices consisting of the first three columns in the first three rows. Clearly, the  submatrices $\ff s_1 \ff$ and $\ff e_1 \ff$ do not even satisfy the most basic Brauer algebra relations, i.e. the relation for squares. This deficiency is overcome by the correction term $Q=Q_1=\sqrt{\frac{b_2}{b_1}}\ff$ and $\frac{1}{b_1}\ff$ from the definition of the isomorphism in Theorem~\ref{thm:main} as we show now explicitly.  We have
$$
Q=\left(\begin{array}{ccc}
 \sqrt{\frac{\alpha+\beta-1}{\alpha+\beta}}& 0 & 0\\
 0 & \sqrt{\frac{\alpha+\beta+1}{\alpha+\beta}} & 0\\
 0 & 0 &  \sqrt{\frac{\beta-\alpha}{\alpha+\beta}}
\end{array}\right) \text{ and }
\frac{1}{b_1}\ff=\left(\begin{array}{ccc}
 \frac{1}{\alpha+\beta}& 0 & 0\\
 0 & \frac{1}{\alpha+\beta} & 0\\
 0 & 0 &  \frac{1}{\alpha+\beta}
\end{array}\right).
$$
By multiplying $\ff e_1 \ff$ from both sides with $Q$ we obtain
$$
Q \ff e_1 \ff Q =\left(\begin{array}{ccc}
 0 & 0 & 0\\
 0 & 0 & 0\\
 0 & 0 & \delta
\end{array}\right),
$$
which is obviously a matrix that squares to $\delta$ times itself. The analogous construction for  $\ff s_1 \ff$ needs an extra correction term (as given in Theorem~\ref{thm:main}): 
$$
Q \ff s_1 \ff Q =\left(\begin{array}{ccc}
 -\frac{\alpha+\beta-1}{\alpha+\beta} & 0 & 0\\
 0 & \frac{\alpha+\beta+1}{\alpha+\beta} & 0\\
 0 & 0 & -\frac{\alpha+\beta-1}{\alpha+\beta}
\end{array}\right) \text{ and } -Q \ff s_1 \ff Q + \frac{1}{b_1}\ff  = \left(\begin{array}{ccc}
1 & 0 & 0\\
 0 & -1 & 0\\
 0 & 0 & 1
\end{array}\right).
$$

\subsection{Case $\delta\neq 0$}If we try to do the same for the Brauer algebra ${\rm Br}_2(0)$ we immediately encounter a problem, since the algebra is not semisimple and so we cannot apply the formulas and constructions from \cite{AMR}. The whole difficulty of our proof is to show that the formulas in Theorem~\ref{thm:main} still make sense and give the required correction terms even in the non-semisimple case.

One can show that via the orthogonal idempotents $\{\frac{1+t}{2},\frac{1-t}{2}\}$ one obtains
\begin{eqnarray}
\label{Schluss}
{\rm Br}_2(0) &\cong&\mathbb{C} \oplus \mathbb{C}[x]/(x^2), 
\end{eqnarray}
with the element $x$ corresponding to the element $g \in {\rm Br}_2(0)$  It becomes graded in the obvious way by putting the idempotents in degree $0$ and $x$ in degree $2$. 

There is in fact a generalization of the up-down tableaux basis in the graded setting via the diagram calculus developed in \cite{ES3}. The explicit isomorphism between this description and the Brauer algebra itself is being worked out in \cite{LiS}.

\begin{remark}
To make the connection from \eqref{Schluss} to \cite{ES3} we note that ${\rm Br}^{\rm gr}_2(0)$ can be realized as the subalgebra of the generalized type ${\rm D}$ Khovanov algebra from \cite{ES2} (using the notation from there) with the following diagrams as basis
\begin{equation*}
\begin{tikzpicture}[thick,scale=0.5]
\begin{scope}[xshift=0cm]
\draw[thin,gray,dashed] (0,0) -- +(3,0);
\node at (0,-.04) {$\mathbf{\circ}$};
\node at (1,.1) {$\mathbf{\down}$};
\draw (1,1) -- +(0,-1);
\node at (2,0) {$\mathbf{\times}$};
\node at (3,.1) {$\mathbf{\down}$};
\draw (3,1) -- +(0,-1);
\draw (1,0.1) -- +(0,-1.1);
\draw (3,0.1) -- +(0,-1.1);
\end{scope}

\begin{scope}[xshift=5cm]
\draw[thin,gray,dashed] (0,0) -- +(3,0);
\node at (0,.1) {$\mathbf{\down}$};
\node at (1,-.1) {$\mathbf{\up}$};
\draw (0,0) .. controls +(0,.75) and +(0,.75) .. +(1,0);
\draw (0,0) .. controls +(0,-.75) and +(0,-.75) .. +(1,0);
\draw (2,1) -- +(0,-1);
\node at (2,.1) {$\mathbf{\down}$};
\draw (3,1) -- +(0,-1);
\node at (3,.1) {$\mathbf{\down}$};
\draw (2,0.1) -- +(0,-1.1);
\draw (3,0.1) -- +(0,-1.1);
\end{scope}

\begin{scope}[xshift=10cm]
\draw[thin,gray,dashed] (0,0) -- +(3,0);
\node at (0,-.1) {$\mathbf{\up}$};
\node at (1,.1) {$\mathbf{\down}$};
\draw (0,0) .. controls +(0,.75) and +(0,.75) .. +(1,0);
\draw (0,0) .. controls +(0,-.75) and +(0,-.75) .. +(1,0);
\draw (2,1) -- +(0,-1);
\node at (2,.1) {$\mathbf{\down}$};
\draw (3,1) -- +(0,-1);
\node at (3,.1) {$\mathbf{\down}$};
\draw (2,0.1) -- +(0,-1.1);
\draw (3,0.1) -- +(0,-1.1);
\end{scope}
\end{tikzpicture}
\end{equation*}
Here the first diagram spans the copy of $\mathbb{C}$ iunder the identification with \eqref{Schluss}, while the other two span a copy of $\mathbb{C}[x]/(x^2)$, with the second diagram being the unit and the third one being  the element of degree $2$, i.e. it corresponds to $x$.
\end{remark}

\bibliographystyle{alpha}
\bibliography{references}

\end{document}